\numberwithin{equation}{section} \DeclareMathSizes{2}{10}{12}{13}
\newcommand*{\doublerightarrow}[2]{\mathrel{
  \settowidth{\@tempdima}{$\scriptstyle#1$}
  \settowidth{\@tempdimb}{$\scriptstyle#2$}
  \ifdim\@tempdimb>\@tempdima \@tempdima=\@tempdimb\fi
  \mathop{\vcenter{
    \offinterlineskip\ialign{\hbox to\dimexpr\@tempdima+1em{##}\cr
    \rightarrowfill\cr\noalign{\kern.5ex}
    \rightarrowfill\cr}}}\limits^{\!#1}_{\!#2}}}
\newcommand*{\triplerightarrow}[1]{\mathrel{
  \settowidth{\@tempdima}{$\scriptstyle#1$}
  \mathop{\vcenter{
    \offinterlineskip\ialign{\hbox to\dimexpr\@tempdima+1em{##}\cr
    \rightarrowfill\cr\noalign{\kern.5ex}
    \rightarrowfill\cr\noalign{\kern.5ex}
    \rightarrowfill\cr}}}\limits^{\!#1}}}
\newtheorem{thm}{Proposition}[section]
\newtheorem{Thm}[thm]{Theorem}
\newtheorem{lem}[thm]{Lemma}
\newtheorem{defn}[thm]{Definition}
\title{Measurings of Hopf algebroids and morphisms in cyclic (co)homology theories}
\author{Abhishek Banerjee \footnote{Department of Mathematics, Indian Institute of Science, Bangalore. Email: abhishekbanerjee1313@gmail.com} $\qquad\qquad\qquad$ Surjeet Kour \footnote{Department of Mathematics, Indian Institute of Technology, Delhi. Email: koursurjeet@gmail.com}}
\date{}
\begin{document}
\maketitle

\medskip
\begin{abstract} In this paper, we consider measurings between Hopf algebroids and show that they induce morphisms on cyclic homology and cyclic cohomology. We also consider comodule measurings between SAYD modules over Hopf algebroids. These give an enrichment of the global category of SAYD modules over comodules. These measurings also  induce morphisms on cyclic (co)homology of Hopf algebroids with SAYD coefficients, which are compatible with Hopf-Galois maps. Finally, we consider non-$\Sigma$ operads with multiplication. We obtain an enrichment of cyclic unital comp modules over non-$\Sigma$ operads, as well as morphisms on cyclic homology induced by measurings of comp modules over operads with multiplication.
\end{abstract}

\medskip MSC(2020) Subject Classification: 16T15, 16E40, 18D50

\medskip Keywords: Hopf algebroids, cyclic (co)homology, SAYD modules, non-$\Sigma$ operads, comp modules

\medskip
\section{Introduction} 

Let $k$ be a field and let $Vect_k$ be the category of vector spaces over $k$. Let $R$, $R'$ be $k$-algebras. A measuring from $R$ to $R'$ consists of a $k$-coalgebra $C$ and a $k$-linear map $\psi:C\longrightarrow Vect_k(R,R')$ satisfying the following conditions
\begin{equation}\label{11vs}
\psi(x)(r_1r_2)=\sum \psi(x_{(1)})(r_1)\psi(x_{(2)})(r_2) \qquad \psi(x)(1_R)=\epsilon_C(x)1_{R'}\qquad x\in C, r_1,r_2\in R
\end{equation}  Here, $\Delta_C(x)=\sum x_{(1)}\otimes x_{(2)}$ denotes the coproduct on $C$ and $\epsilon_C:C\longrightarrow k$ denotes the counit. This is a classical notion introduced by
Sweedler \cite{Sweed}, and the pair $(C,\psi)$ captures generalized morphisms of algebras from $R$ to $R'$. For instance, if $x\in C$ is a group like element, i.e., $\Delta_C(x)=x\otimes x$ and $\epsilon_C
(x)=1$, the condition in \eqref{11vs} reduces to that of a unital ring morphism $\psi(x)$ from $R$ to $R'$.  If $R=R'$ and  $x\in C$ is a primitive element, i.e., 
$\Delta_C(x)=x\otimes 1+1\otimes x$, then \eqref{11vs} gives a derivation $\psi(x)$ on $R$. 

\smallskip
Between any $k$-algebras $R$ and $R'$, Sweedler \cite{Sweed}  showed that there is a universal object in the category of coalgebra measurings from $R$ to $R'$. This universal measuring coalgebra, denoted $\mathcal M(R,R')$, plays the role of a generalized hom, giving an enrichment of $k$-algebras over the category of $k$-coalgebras. Accordingly, the universal measuring coalgebra is often called the ``Sweedler hom'' (see, for instance, Anel and Joyal \cite{AJ}). We mention also that the Sweedler hom is closely related to the classical Sweedler dual of an algebra (see Porst and Street \cite{Pors}).  Over the years, measurings have been widely studied in a number of contexts, such as with differential graded algebras (see Anel and Joyal \cite{AJ}), monoids in a braided monoidal category (see Hyland, L\'{o}pez Franco and Vasilakopoulou \cite{V2}, Vasilakopoulou \cite{Vs0}), bialgebras and Hopf algebras (see Grunenfelder
and Mastnak \cite{GM1}, \cite{GM2}), or with entwining structures (see Brzezi\'{n}ski \cite{BrJA}).  For more on this subject, we refer the reader, for instance, to \cite{AJ}, \cite{BanK}, \cite{Bat0}, \cite{Bat}, \cite{BrJA},  \cite{V2}, \cite{V1},\cite{Vs0}, \cite{Vs1}, \cite{Vs2}. For modules over algebras, there is a similar notion of measuring comodules (see \cite{Bat}, \cite{V1}). This consists of a coalgebra measuring $\psi:C\longrightarrow Vect_k(R,R')$, and a right $C$-comodule $D$ equipped with a linear map $\omega:D\longrightarrow Vect_k(M,M')$ satisfying an adaptation of the condition in \eqref{11vs}, where $M$ is a right $R$-module and $M'$ is a right $R'$-module. The latter leads to an enrichment of modules over comodules (see Hyland, L\'{o}pez Franco and Vasilakopoulou \cite{V1}). 

\smallskip
Since coalgebra measurings are like generalized morphisms of algebras, can they be used to induce maps between homology theories? We began studying this question in \cite{BanK} and constructed morphisms in Hochschild homology of algebras. We now take this idea further. 
We ask if the same can be done, for instance, in the Hopf cyclic cohomology of Connes and Moscovici (see \cite{CM1}, \cite{CM2}). Hopf cyclic cohomology and the operators on it are an object of study in their own right. In particular, operators on Hopf cyclic cohomology capture symmetries of ``noncommutative spaces'' arising in a wide variety of situations from foliations to jet bundles and modular Hecke algebras,  which has far reaching implications in noncommutative geometry (see, for instance, \cite{CM1}, \cite{CM3}). 

\smallskip
In this paper, we work with coalgebra measurings in three different situations. Our framework is more exhaustive than in \cite{BanK}, because we are able to include, for the most part, both Hochschild and cyclic (co)homology theories, as well as work with coefficients. The contexts that we study in this paper are as follows.

\smallskip
(1) We begin with Hopf algebroids, which are the generalization of Hopf algebras to noncommutative base rings. We use coalgebra measurings to induce maps between Hochschild and cyclic theories, and comodule measurings to induce maps between these theories with SAYD coefficients. One particular feature is that the morphisms between Hochschild and cyclic (co)homologies
are compatible with Hopf-Galois maps. Another feature is that for commutative Hopf algebroids, we can induce coalgebra measurings of their Hochschild homology algebras equipped with the shuffle product.

\smallskip
(2) We study coalgebra measurings which induce maps on the homology of Lie-Rinehart algebras. We relate these to maps between the periodic cyclic cohomology of their universal enveloping algebras, which carry the structure of left Hopf algebroids. 

\smallskip
(3) Finally, we consider cyclic comp modules in the sense of Kowalzig \cite{Ko4} over non-symmetric operads with multiplication. For comodule measurings between  cyclic comp modules, we obtain induced maps between cyclic homologies. In particular, these measurings induce  maps on cyclic homology of braided commutative Yetter-Drinfeld algebras over a given
Hopf algebroid.

\smallskip We now describe the paper in more detail. We recall that a Hopf algebroid is a datum $\mathcal U=(U,A_L,s_L,t_L,\Delta_L,\epsilon_L,S)$ where $U$ and $A$ are $k$-algebras (we often write
$A_L$ for the ring $A$ and $A_R$ for the opposite ring $A^{op}$), $s_L:A\longrightarrow U$ (resp. $t_L:A^{op}\longrightarrow U$) is known as the source map (resp. the target map), 
$\Delta_L: U\longrightarrow U\otimes_{A_L}U$ is the coproduct,  $\epsilon_L:U\longrightarrow A_L$ is the counit and $S:U\longrightarrow U$ is the antipode (see Definition \ref{D2.2}). By a measuring of Hopf algebroids
from $\mathcal U=(U,A_L,s_L,t_L,\Delta_L,\epsilon_L,S)$ to $\mathcal U'=(U',A_L',s'_L,t'_L,\Delta'_L,\epsilon'_L,S')$, we mean a cocommutative coalgebra $C$ along with a pair $(\Psi,\psi)$ of measurings of algebras
\begin{equation}\label{12intro}
\Psi:C\longrightarrow Vect_k(U,U')\qquad \psi:C\longrightarrow Vect_k(A,A')
\end{equation} satisfying certain compatibility conditions (see Definition \ref{D2.3}).  We construct a universal (cocommutative) measuring coalgebra $\mathcal M_c(\mathcal U,\mathcal U')$ for Hopf algebroids $\mathcal U$, $\mathcal U'$ over $k$. We   use these as generalized hom objects to obtain an enriched category $HALG_k$ of Hopf algebroids (see Theorem \ref{T2.7}) over the symmetric monoidal category $CoCoalg_k$ of cocommutative coalgebras. We show in Section 3 that these measurings induce morphisms on the cyclic (co)homology groups of Hopf algebroids introduced by
Kowalzig and Posthuma \cite{KoP}, and that these are well behaved with respect to Hopf-Galois maps.  If we restrict to Hopf algebroids that are also commutative, we know (see \cite{Ko3}) that their Hochschild homology groups are equipped with a $k$-algebra structure given by the shuffle product. Given a coalgebra measuring between commutative Hopf algebroids as in \eqref{12intro}, we show that the morphisms on the underlying Hochschild homologies
\begin{equation}\label{13intro}
\underline{\Psi}^{hoc}:C\longrightarrow Vect_k(HH_\bullet(\mathcal U),HH_\bullet(\mathcal U'))
\end{equation} give a measuring with respect to the shuffle product structure (see Proposition \ref{P4.1n}). By considering the universal measuring coalgebras of Sweedler between Hochschild homology rings equipped with the shuffle product, we obtain a second enrichment $\widetilde{cHALG_k}$  of commutative Hopf algebroids over $CoCoalg_k$. Further, we construct a $CoCoalg_k$-enriched functor $\tau: cHALG_k\longrightarrow \widetilde{cHALG}_k$, where $cHALG_k$ is the full subcategory of the $CoCoalg_k$-enriched category $HALG_k$ consisting of commutative Hopf algebroids (see Theorem \ref{T4.3}).

\smallskip
Let $\mathcal U$, $\mathcal U'$ be Hopf algebroids and let $P$ (resp. $P'$) be a  stable anti-Yetter Drinfeld module (or SAYD module) over $\mathcal U$ (resp. $\mathcal U'$). By a comodule measuring from $P$ to $P'$, we mean a datum
\begin{equation}\label{14intro} \Psi:C\longrightarrow Vect_k(U,U')\qquad \psi:C\longrightarrow Vect_k(A,A')\qquad \Omega: D\longrightarrow Vect_k(P,P')
\end{equation} satisfying certain conditions, where $(\Psi,\psi)$ is a coalgebra measuring as in \eqref{12intro} and $D$ is a $C$-comodule (see Definition \ref{D5.6}). We construct a universal measuring
comodule $\mathcal Q_C(P,P')$ for the coalgebra measuring $(\Psi,\psi)$. Thereafter, we consider pairs $(\mathcal U,P)$, where $\mathcal U$ is a Hopf algebroid and $P$ is an SAYD module 
over $\mathcal U$. By considering the universal measuring coalgebra $\mathcal M_c(\mathcal U,\mathcal U')$ and taking the corresponding universal measuring comodule $\mathcal Q_{\mathcal M_c(\mathcal U,\mathcal U')}(P,P')$, we show (see Theorem \ref{T5.9hh}) that these pairs form a ``global category of SAYD modules'' that is enriched over the global category of comodules over $k$. The latter consists of pairs
$(C,D)$, where $C$ is a cocommutative $k$-coalgebra and $D$ is a $C$-comodule. The (co)cyclic module corresponding to an SAYD module $P$ over a Hopf algebroid $\mathcal U$ was introduced by Kowalzig and Kr\"{a}hmer in \cite{KoKr}. We show that a comodule measuring as in \eqref{14intro} induces morphisms on the corresponding Hochschild and cyclic (co)homology groups (see Proposition \ref{P6.3h}). Moreover, these induced morphisms are well behaved with respect to Hopf-Galois maps (see Theorem \ref{T6.3}). 

\smallskip
In Section 7, we consider coalgebra measurings of Lie-Rinehart algebras. In particular, we show that a coalgebra measuring between Lie-Rinehart algebras induces morphisms on their homology theories. We also show that such a measuring leads to a coalgebra measuring of their universal enveloping algebras. Additionally, the  morphisms on homology theories of Lie-Rinehart algebras are compatible with the morphisms induced by this coalgebra measuring on the cyclic theory of their universal enveloping algebras. 

\smallskip
In the final part of this paper, we come to non-$\Sigma$ operads with multiplication. We recall that a non-$\Sigma$ operad $(\mathscr O,m,e)$ consists of a collection of vector
spaces $\mathscr O=\{\mathscr O(n)\}_{n\geq 0}$ equipped with composition operations, a multiplication $m\in 
\mathscr O(2)$ and a unit $e\in \mathscr O(0)$. If $(\mathscr O,m,e)$ and $(\mathscr O',m',e')$ are such operads, we consider measurings where $C$ is a cocommutative coalgebra and $\Psi
$ is a family of linear maps
\begin{equation}\label{15intr}
\Psi=\{\Psi_n:C\longrightarrow Vect_k(\mathscr O(n),\mathscr O'(n))\}_{n\geq 0}
\end{equation} satisfying certain conditions with respect to the composition operations, multiplication and unit (see Definition \ref{D7.11h}). We construct a universal (cocommutative) measuring coalgebra $\mathcal M_c(\mathscr O,\mathscr O')$ and use it to obtain an enrichment of  non-$\Sigma$ operads with multiplication over the  symmetric monoidal category $CoCoalg_k$ of cocommutative coalgebras. We then consider pairs $(\mathscr O,\mathscr L)$, where $\mathscr O$ is a non-$\Sigma$ operad with multiplication and $\mathscr L$ is a cyclic unital comp module over $\mathscr O$ in the sense of Kowalzig \cite{Ko4}. If $\mathscr L$ (resp. $\mathscr L')$ is a cyclic unital comp module over $\mathscr O$  (resp. $\mathscr O'$), we   consider comodule measurings 
\begin{equation}\label{16intr}
\Psi=\{\Psi_n:C\longrightarrow Vect_k(\mathscr O(n),\mathscr O'(n))\}_{n\geq 0}\qquad \Omega=\{\Omega_n:D\longrightarrow Vect_k(\mathscr L(n),\mathscr L'(n))\}_{n\geq 0}
\end{equation} where $\Psi=\{\Psi_n\}_{n\geq 0}$ is a measuring from $\mathscr O$ to $\mathscr O'$ in the sense of \eqref{15intr} and $D$ is a $C$-comodule (see Definition \ref{D7.3}). By constructing universal measuring comodules, we show that the pairs $(\mathscr O,\mathscr L)$ form a category enriched over the global category of comodules (see Theorem \ref{T7.7kq}). In 
\cite{Ko4}, Kowalzig introduced a cyclic module corresponding to a cyclic unital comp module over a non-$\Sigma$ operad with multiplication. Accordingly, we show that a comodule measuring as in \eqref{16intr} induces morphisms on cyclic homology groups. 

\smallskip
We conclude by considering measurings $\psi:C\longrightarrow Vect_k(Z,Z')$ between braided commutative Yetter-Drinfeld algebras $Z$, $Z'$ over a given Hopf algebroid $\mathcal U=(U,A_L,s_L,t_L,\Delta_L,\epsilon_L,S)$. We know from 
\cite{Ko4} that there is a non-$\Sigma$ operad $C^\bullet(\mathcal U,Z)$   with multiplication associated to a braided commutative Yetter-Drinfeld algebra $Z$ over $\mathcal U$. We show that a measuring $\psi:C\longrightarrow Vect_k(Z,Z')$ induces a measuring of non-$\Sigma$ operads from $C^\bullet(\mathcal U,Z)$ to $C^\bullet(\mathcal U,Z')$ in the sense of \eqref{15intr}. Further, if $L$ is an anti-Yetter Drinfeld module over $\mathcal U$ such that $L\otimes_{A^{op}}Z$ is stable, then we know from \cite{Ko4} that this determines a cyclic unital comp module $C_\bullet(\mathcal U,L\otimes_{A^{op}}Z)$ over $C^\bullet(\mathcal U,Z)$. Accordingly, we show that a measuring  $\psi:C\longrightarrow Vect_k(Z,Z')$ along with a morphism
$L\longrightarrow L'$ of anti-Yetter Drinfeld modules induces a comodule measuring of cyclic unital comp modules from  $C_\bullet(\mathcal U,L\otimes_{A^{op}}Z)$  to  $C_\bullet(\mathcal U,L'\otimes_{A^{op}}Z')$   in the sense of \eqref{16intr}. In particular, this induces   morphisms on the corresponding cyclic homology groups (see Proposition \ref{P7.9uc}).

\section{Measurings of Hopf algebroids}

Throughout, $k$ is a field and let $Vect_k$ be the category of $k$-vector spaces. Let $A$ be a unital $k$-algebra. In order to define left and right bialgebroids, as well as Hopf algebroids, we will frequently need both the algebra $A$ and its opposite algebra $A^{op}$. For this, we will often write the algebra $A$ as $A_L$, while $A^{op}$ will often be written as $A_R$. 

\smallskip
An $(s,t)$-ring over $A$ consists of a unital $k$-algebra $U$ along with two $k$-algebra morphisms $s:A\longrightarrow U$ and $t:A^{op}\longrightarrow U$ whose images commute in $U$, i.e., $s(a_1)t(a_2)=t(a_2)s(a_1)$ for any $a_1$, $a_2\in U$. The morphisms $s$ and $t$ are often referred to as source and target maps respectively. These morphisms introduce an $(A,A)$-bimodule structure on $U$ given by left multiplication
\begin{equation}\label{2.1eq}
a_1\cdot u\cdot a_2:=s(a_1)t(a_2)u \qquad a_1,a_2 \in A, \textrm{ }u\in U
\end{equation} The left and right $A$-module structures on $U$ in \eqref{2.1eq} allow us to consider the tensor product $U\otimes_AU$. The following subspace of $U\otimes_AU$ is known as the Takeuchi product
\begin{equation}\label{tak}
U\times_AU:=\{\mbox{$\sum u_i\otimes_Au_i'\in U\otimes_AU$ $\vert$ $\sum u_it(a)\otimes_Au_i'=\sum u_i\otimes_Au'_is(a) $, $\forall$ $a\in A$}\}
\end{equation}  It is well known (see, for instance, \cite[$\S$ 2]{KoP}) that the Takeuchi product $U\times_AU$ is a unital subalgebra of $U\otimes_AU$.

\smallskip
From now onwards, we also fix a unital $k$-algebra $U$. The multiplication on $U$ will be denoted by $\mu_U$. 
Since the category of $(A,A)$-bimodules is monoidal, we can consider coalgebra objects in this category. We now recall the notion of a left Hopf algebroid (see, for instance, \cite{BoK}, \cite{KoP}, \cite{Tak}). For several closely related notions, see \cite{Sch1}, \cite{Sch2}. 

\begin{defn}\label{D2.1}
A left bialgebroid $\mathcal U_L:=(U,A_L,s_L,t_L,\Delta_L,\epsilon_L)$  over $k$ consists of the following data:

\smallskip
(1) A unital $k$-algebra $A_L$ 

\smallskip
(2) A unital $k$-algebra $U$ which carries the structure of an $(s_L,t_L)$ ring over $A_L$.

\smallskip
(3) A coalgebra object $(U,\Delta_L:U\longrightarrow U\otimes_{A_L}U,\epsilon_L:U\longrightarrow A_L)$ in the category of $(A_L,A_L)$-bimodules satisfying the following conditions:

\smallskip
$\qquad$  (i) $\Delta_L:U\longrightarrow U\otimes_{A_L}U$ factors through $U\times_{A_L}U\subseteq U\otimes_{A_L}U$ and $U\longrightarrow U\times_{A_L}U$ is a morphism of
unital $k$-algebras.

\smallskip
$\qquad$  (ii) $\epsilon_L(us_L(\epsilon_L(u')))=\epsilon_L(uu')=\epsilon_L(ut_L(\epsilon_L(u')))$ for all $u$, $u'\in U$.

\smallskip
A morphism $(F,f):(U,A_L,s_L,t_L,\Delta_L,\epsilon_L)= \mathcal U_L\longrightarrow \mathcal  U_L'=(U',A_L',s_L',t_L',\Delta_L',\epsilon_L')$ of left bialgebroids consists of a pair of unital $k$-algebra morphisms  $F: U\longrightarrow
U'$ and $f: A_L\longrightarrow A_L'$ such that 
\begin{equation}
F\circ s_L=s'_L\circ f\qquad F\circ t_L=t'_L\circ f\qquad \Delta'_L\circ F=(F\otimes_fF)\circ \Delta_L \qquad f\circ \epsilon_L=\epsilon'_L\circ F
\end{equation} We will denote the category of left bialgebroids over $k$ by $LBialg_k$. 
\end{defn}

If $\mathcal U_L=(U,A_L,s_L,t_L,\Delta_L,\epsilon_L)$  is a left bialgebroid, we employ standard Sweedler notation to write $\Delta_L(u)=\sum u_{(1)}\otimes u_{(2)}$ for any $u\in U$. We also suppress the summation symbol throughout.
We now recall the notion of Hopf algebroid from \cite[Definition 4.1]{BoK}. 

\begin{defn}\label{D2.2}
A Hopf algebroid $\mathcal U=(\mathcal U_L,S)$ over $k$ consists of the following data:

\smallskip
(1) A left bialgebroid $\mathcal U_L=(U,A_L,s_L,t_L,\Delta_L,\epsilon_L)$ over $k$.

\smallskip
(2) An involutive anti-automorphism $S:U\longrightarrow U$ of the $k$-algebra $U$ which satisfies $S\circ t_L=s_L$ as well as
\begin{equation}
S(u_{(1)})_{(1)}u_{(2)}\otimes S(u_{(1)})_{(2)}=1_U\otimes S(u)\qquad S(u_{(2)})_{(1)}\otimes S(u_{(2)})_{(2)}u_{(1)}=S(u)\otimes 1_U
\end{equation} as elements of $U\otimes_{A_L}U$, for all $u\in U$. 

\smallskip
A morphism $(F,f): \mathcal U=(\mathcal U_L,S)\longrightarrow (\mathcal U_L',S')=\mathcal U'$ of Hopf algebroids is a morphism in $LBialg_k$ that also satisfies $S'\circ F=F\circ S$. We will denote the category of Hopf algebroids over $k$ by $HAlg_k$. 
\end{defn}

We remark here that in this paper we will always assume that the antipode on a Hopf algebroid $\mathcal U=(U_L,S)$ is involutive, i.e., $S^2=id$. However, this condition is not part of the  definition 
due to  B\"{o}hm and Szlach\'{a}nyi in \cite{BoK}. Further, it is shown in \cite[Proposition 4.2]{BoK} that a Hopf algebroid $(\mathcal U_L,S)$ is equivalent to a datum consisting of a left bialgebroid and a right bialgebroid connected by an antipode. 

\smallskip
We now recall the classical notion of a coalgebra measuring due to Sweedler \cite{Sweed}. Let $R$, $R'$ be $k$-algebras and $C$ be a $k$-coalgebra.  Then, a $C$-measuring from 
$R$ to $R'$ consists of a morphism $\psi:C\longrightarrow Vect_k(R,R')$ such that
\begin{equation}\label{2.5ft}
\psi(x)(ab)=\sum \psi(x_{(1)})(a)\psi(x_{(2)})(b) \qquad \psi(x)(1_R)=\epsilon_C(x)1_{R'} \qquad \forall\textrm{ }a,b\in R 
\end{equation} where the coproduct $\Delta_C:C\longrightarrow C\otimes C$ is given by $\Delta_C(x)=\sum x_{(1)}\otimes x_{(2)}$ for any $x\in  C$ and $\epsilon_
C:C\longrightarrow k$ is the counit. From now on, we will almost always suppress the summation symbol in Sweedler notation and write $\Delta_C(x)=x_{(1)}\otimes x_{(2)}$. The measuring as in \eqref{2.5ft} is said to be cocommutative if the coalgebra $C$ is cocommutative. In this paper, we will only consider
cocommutative measurings. If $\psi:C\longrightarrow Vect_k(R,R')$ is a coalgebra measuring, we will often write the morphism $\psi(x)\in Vect_k(R,R')$ simply
as $x:R\longrightarrow R'$ for any $x\in  C$. 

\smallskip
We are now ready to introduce the notion of measuring between Hopf algebroids.

\begin{defn}\label{D2.3}
Let  $\mathcal U=(\mathcal U_L,S)=(U,A_L,s_L,t_L,\Delta_L,\epsilon_L,S)$ and
$\mathcal U'=(\mathcal U'_L,S')=(U',A_L',s'_L,t'_L,\Delta'_L,\epsilon'_L,S')$ be Hopf algebroids over $k$. Let $C$ be a cocommutative $k$-coalgebra. A $C$-measuring $(\Psi,\psi)$ from  $\mathcal U$ to $\mathcal U'$ consists of a pair of
measurings 
\begin{equation} \Psi:C\longrightarrow Vect_k(U,U')\qquad \psi:C\longrightarrow Vect_k(A_L,A'_L)
\end{equation} such that the following diagrams commute for any $x\in  C$
\begin{equation}\label{2.7}
\begin{array}{ccc}
\begin{CD}
A_L @>s_L>> U \\
@VxVV @VVxV \\
A_L' @>s'_L>> U'\\
\end{CD} \qquad \qquad &  \begin{CD}
A_L @>t_L>> U \\
@VxVV @VVxV \\
A_L' @>t'_L>> U'\\
\end{CD} &\qquad \qquad \begin{CD}
U @>S>> U \\
@VxVV @VVxV \\
U' @>S'>> U'\\
\end{CD} \\
\end{array}
\end{equation}
\begin{equation}\label{2.8}
\begin{array}{cc}
\begin{CD}
U @>\epsilon_L>> A_L \\
@VxVV @VVxV \\
U' @>\epsilon'_L>> A_L'\\
\end{CD}\qquad \qquad  &\begin{CD}
U@>\Delta_L>> U\otimes_{A_L}U\\
@VxVV @VVxV \\
U' @>\Delta'_L>> U' \otimes_{A'_L}U'\\
\end{CD} \\
\end{array}
\end{equation} where the arrow $x: U\otimes_{A_L}U\longrightarrow  U' \otimes_{A'_L}U'$  is defined by setting $x(u^1\otimes u^2):=x_{(1)}(u^1)\otimes x_{(2)}(u^2)$ for $u^1\otimes u^2
\in U \otimes_{A_L}U $. 

\end{defn}

Before proceeding further, we need to verify the following fact.

\begin{lem}\label{L2.4}
For any $x\in  C$, the morphism $x: U\otimes_{A_L}U\longrightarrow  U' \otimes_{A'_L}U'$  defined by setting $x(u^1\otimes u^2):=x_{(1)}(u^1)\otimes x_{(2)}(u^2)$ for $u^1\otimes u^2
\in U \otimes_{A_L}U $ is well-defined.
\end{lem}

\begin{proof} We consider $u^1,u^2\in U$ and $a\in A_L$. Using the fact that $ \Psi:C\longrightarrow Vect_k(U,U')$ is a measuring
and applying the conditions in \eqref{2.7} and \eqref{2.8}, we see that for any $x\in C$ we have
\begin{equation}
\begin{array}{ll}
x((u^1\cdot a)\otimes u^2)=x(t_L(a)u^1\otimes u^2)&=x_{(1)}(t_L(a)u^1)\otimes x_{(2)}(u^2)
=x_{(1)}(t_L(a))x_{(2)}(u^1)\otimes x_{(3)}(u^2)\\
&=t_L'(x_{(1)}(a))x_{(2)}(u^1)\otimes x_{(3)}(u^2)\\
&=t_L'(x_{(2)}(a))x_{(1)}(u^1)\otimes x_{(3)}(u^2)\quad \mbox{(because $C$ is cocommutative)}\\
&= x_{(1)}(u^1)\cdot x_{(2)}(a)\otimes x_{(3)}(u^2)=x_{(1)}(u^1)\otimes x_{(2)}(a)\cdot x_{(3)}(u^2)\\ &=x_{(1)}(u^1)\otimes s_L'(x_{(2)}(a))x_{(3)}(u^2)
=x_{(1)}(u^1)\otimes x_{(2)}(s_L(a))x_{(3)}(u^2)\\
&=x_{(1)}(u^1)\otimes x_{(2)}(s_L(a)u^2)=x_{(1)}(u^1)\otimes x_{(2)}(a\cdot u^2)\\
&= x(u^1\otimes (a\cdot u^2))
\end{array}
\end{equation}

\end{proof}

If $\mathcal U=(\mathcal U_L,S)$ and $\mathcal U'=(\mathcal U_L',S')$ are Hopf algebroids over $k$, we now consider the subspace
\begin{equation}\label{210y}
V(\mathcal U,\mathcal U') \subseteq Vect_k(U,U')\times Vect_k(A_L,A_L')
\end{equation} given by setting
\begin{equation}\label{211y}
V(\mathcal U,\mathcal U'):=\{\mbox{$(F,f)$ $\vert$  $Fs_L=s_L'f$, $Ft_L=t_L'f$, $FS=S'F$ and $f\epsilon_L=\epsilon'_LF$ }\}
\end{equation} We note that a measuring from $\mathcal U$ to $\mathcal U'$  by means of a cocommutative coalgebra $C$ has an underlying morphism $(\Psi,\psi):C\longrightarrow V(\mathcal U,\mathcal U')$. 

\smallskip
Let $Coalg_k$ denote the category of $k$-coalgebras. We know that the forgetful
functor $Coalg_k\longrightarrow Vect_k$ has a right adjoint $\mathfrak C:Vect_k\longrightarrow Coalg_k$. In other
words, we have natural isomorphisms
\begin{equation}\label{adj2}
Vect_k(C,V)\cong Coalg_k(C,\mathfrak C(V))
\end{equation} for any $k$-coalgebra $C$ and any $k$-vector space $V$.

\begin{thm}\label{P2.5}
Let $\mathcal U=(\mathcal U_L,S)$ and $\mathcal U'=(\mathcal U_L',S')$ be Hopf algebroids over $k$. Then, there exists a cocommutative coalgebra $\mathcal M_c(\mathcal U,\mathcal U')$ and a measuring
$(\Phi,\phi):\mathcal M_c(\mathcal U,\mathcal U')\longrightarrow V( \mathcal U, \mathcal U')$ satisfying the following universal property: given any measuring $(\Psi,\psi):C\longrightarrow V(\mathcal U,\mathcal U')$ with a cocommutative coalgebra $C$, there exists a unique morphism $\xi:C\longrightarrow \mathcal M_c(\mathcal U,\mathcal U')$ of coalgebras making the following diagram commutative
\begin{equation}
\xymatrix{
\mathcal M_c(\mathcal U,\mathcal U')\ar[rr]^{(\Phi,\phi)} && V(\mathcal U,\mathcal U') \\
&C\ar[ul]^\xi\ar[ur]_{(\Psi,\psi)}&\\
}
\end{equation}
\end{thm}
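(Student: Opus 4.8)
The plan is to follow Sweedler's classical construction of a universal measuring coalgebra, realizing $\mathcal M_c(\mathcal U,\mathcal U')$ as the largest cocommutative subcoalgebra of a cofree coalgebra on which the measuring relations hold. Write $V:=V(\mathcal U,\mathcal U')$ as in \eqref{210y}--\eqref{211y} and let $\mathfrak C(V)$ be the cofree coalgebra of \eqref{adj2}, with counit $\pi:\mathfrak C(V)\longrightarrow V$. Composing $\pi$ with the two projections of $V\subseteq Vect_k(U,U')\times Vect_k(A_L,A_L')$ yields a pair $(\Phi_0,\phi_0)$ of linear maps out of $\mathfrak C(V)$. By the adjunction \eqref{adj2}, for any cocommutative coalgebra $C$ the coalgebra maps $\tilde\xi:C\longrightarrow\mathfrak C(V)$ correspond bijectively to the linear maps $(\Psi,\psi):C\longrightarrow V$, via $\Psi=\Phi_0\circ\tilde\xi$ and $\psi=\phi_0\circ\tilde\xi$; and because every value lands in $V$, such a pair automatically satisfies the source, target and antipode squares of \eqref{2.7} together with the counit square of \eqref{2.8}. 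Thus the only conditions left to encode are that $\Phi_0\circ\tilde\xi$ and $\phi_0\circ\tilde\xi$ be algebra measurings in the sense of \eqref{2.5ft} and that the coproduct square of \eqref{2.8} commute.

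Next I would single out the relevant subcoalgebras. Call a subcoalgebra $D\subseteq\mathfrak C(V)$ \emph{admissible} if the restrictions $\Phi_0|_D$ and $\phi_0|_D$ are algebra measurings and the coproduct square of \eqref{2.8} commutes for every $x\in D$; note that once the multiplicativity conditions hold on $D$, Lemma \ref{L2.4} guarantees that the map $x:U\otimes_{A_L}U\longrightarrow U'\otimes_{A'_L}U'$ occurring in \eqref{2.8} is well defined, so the last condition is meaningful. The heart of the argument is the claim that the family of admissible cocommutative subcoalgebras is closed under arbitrary sums, and this is where I expect the main (though essentially routine) work to lie. Each defining relation, for instance $\Phi_0(x)(u_1u_2)=\Phi_0(x_{(1)})(u_1)\Phi_0(x_{(2)})(u_2)$, is $k$-linear in $x$, and since each $D_i$ is a subcoalgebra one has $\Delta(x)\in D_i\otimes D_i$, so the Sweedler components $x_{(1)},x_{(2)}$ may be taken inside $D_i$; writing an element of $\sum_i D_i$ as a finite sum of elements of the $D_i$ and using additivity of $\Delta$, the relations extend by linearity to the sum. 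Cocommutativity is inherited by sums in the same way.

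I would then define $\mathcal M_c(\mathcal U,\mathcal U')$ to be the sum of all admissible cocommutative subcoalgebras of $\mathfrak C(V)$; by the closure claim it is itself admissible and cocommutative, hence the largest such, and I set $(\Phi,\phi):=(\Phi_0,\phi_0)|_{\mathcal M_c(\mathcal U,\mathcal U')}$, which is by construction a measuring valued in $V$. For the universal property, given a measuring $(\Psi,\psi):C\longrightarrow V$ with $C$ cocommutative, the adjunction \eqref{adj2} furnishes a unique coalgebra map $\tilde\xi:C\longrightarrow\mathfrak C(V)$ with $\pi\circ\tilde\xi=(\Psi,\psi)$. Its image $\tilde\xi(C)$ is a cocommutative subcoalgebra, and it is admissible: for $y=\tilde\xi(c)$ one has $y_{(1)}\otimes y_{(2)}=\tilde\xi(c_{(1)})\otimes\tilde\xi(c_{(2)})$ because $\tilde\xi$ is a coalgebra map, so each defining relation for $\tilde\xi(C)$ reduces to the corresponding measuring relation for $(\Psi,\psi)$ on $C$, which holds by hypothesis. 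Hence $\tilde\xi(C)\subseteq\mathcal M_c(\mathcal U,\mathcal U')$, and $\tilde\xi$ factors as $\iota\circ\xi$ through the inclusion $\iota:\mathcal M_c(\mathcal U,\mathcal U')\hookrightarrow\mathfrak C(V)$, giving a map $\xi:C\longrightarrow\mathcal M_c(\mathcal U,\mathcal U')$ with $(\Phi,\phi)\circ\xi=(\Psi,\psi)$. Finally, uniqueness of $\xi$ follows from the uniqueness clause of the adjunction: any two such maps become equal after composing with the monomorphism $\iota$, hence are themselves equal. This completes the construction.
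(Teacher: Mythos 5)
Your proposal is correct and takes essentially the same approach as the paper: both realize $\mathcal M_c(\mathcal U,\mathcal U')$ as the sum of all cocommutative subcoalgebras of the cofree coalgebra $\mathfrak C(V)$ on which the canonical map $\pi(V)$ restricts to a measuring, and obtain the universal property by factoring any cocommutative measuring through $\mathfrak C(V)$ via the adjunction \eqref{adj2} and observing that the image lands in the sum. The only difference is that you spell out details the paper declares clear, namely the closure of admissible subcoalgebras under sums and the uniqueness of $\xi$ via the adjunction and the injectivity of the inclusion.
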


\begin{proof}
We set $V:=V(\mathcal U,\mathcal U') $ and consider the canonical morphism $\pi(V):\mathfrak C(V)\longrightarrow V\subseteq Vect_k(U,U')\times Vect_k(A_L,A_L')$ from the cofree coalgebra $\mathfrak C(V)$ induced by the adjunction
in \eqref{adj2}. We now set $\mathcal M_c(\mathcal U,\mathcal U'):=\sum D$, where the sum is taken over all cocommutative subcoalgebras of $\mathfrak C(V)$ such that the restriction
$\pi(V)\vert_D:D\longrightarrow V=V(\mathcal U,\mathcal U') $ is a measuring. It is clear that this sum is still a cocommutative coalgebra, and that the restriction $(\Phi,
\phi):=\pi(V)|_{\mathcal M_c(\mathcal U,\mathcal U')}$ gives a measuring from $\mathcal U$ to $\mathcal U'$.

\smallskip
In general, if $(\Psi,\psi):C\longrightarrow V=V(\mathcal U,\mathcal U') $ is a cocommutative measuring, the adjunction in \eqref{adj2} shows that it factors through $\xi:C\longrightarrow \mathfrak C(V)$. Then, $\xi(C)\subseteq \mathfrak C(V)$ is a cocommutative coalgebra such that the restriction $\pi(V)|_{\xi(C)}$ is a measuring. By definition, it follows that $\xi(C)\subseteq \mathcal M_c(\mathcal U,\mathcal U')$. This proves the result. 
\end{proof}

From \eqref{210y} and \eqref{211y} it is clear that given Hopf algebroids $\mathcal U=(\mathcal U_L,S)$, $\mathcal U'=(\mathcal U_L',S')$ and $\mathcal U''=(\mathcal U_L'',S'')$, the composition of morphisms induces a canonical map
\begin{equation}\label{comp2}
V(\mathcal U,\mathcal U') \otimes V(\mathcal U',\mathcal U'')\overset{\circ}{\longrightarrow}V(\mathcal U,\mathcal U'')  
\end{equation} We denote by $CoCoalg_k$ the category of cocommutative coalgebras over $k$. We know that this category is symmetric monoidal and our objective is to show that the category $HAlg_k$ of Hopf algebroids is enriched over $CoCoalg_k$. For this we need the following result.

\begin{thm}\label{P2.6}
Let  $\mathcal U=(\mathcal U_L,S)$, $\mathcal U'=(\mathcal U_L',S')$ and $\mathcal U''=(\mathcal U_L'',S'')$ be Hopf algebroids over $k$. Suppose that we have  a measuring $(\Psi,\psi):C\longrightarrow V(\mathcal U,\mathcal U')$   and a measuring 
$(\Psi',\psi'):C'\longrightarrow V(\mathcal U',\mathcal U'')$. Then, the following 
\begin{equation}\label{xcomp2}
(\Psi',\psi')\circ (\Psi,\psi): C\otimes C'\xrightarrow{(\Psi,\psi)\otimes (\Psi',\psi')}V(\mathcal U,\mathcal U') \otimes V(\mathcal U',\mathcal U'')\overset{\circ}{\longrightarrow}V(\mathcal U,\mathcal U'')  
\end{equation} 
determines a measuring  from $\mathcal U$ to $\mathcal U''$.
\end{thm}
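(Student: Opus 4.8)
The plan is to unwind the composite \eqref{xcomp2} on decomposable tensors and verify the three defining requirements of Definition \ref{D2.3} separately. Writing $(\Theta,\theta)$ for the composite measuring, its components are given on $x\otimes x'\in C\otimes C'$ by $\Theta(x\otimes x')=\Psi'(x')\circ\Psi(x)$ and $\theta(x\otimes x')=\psi'(x')\circ\psi(x)$, and I would give $C\otimes C'$ its tensor-product coalgebra structure, so that it is cocommutative and $\Delta_{C\otimes C'}(x\otimes x')=(x_{(1)}\otimes x'_{(1)})\otimes(x_{(2)}\otimes x'_{(2)})$. The first observation is that the conditions \eqref{2.7} together with the counit square in \eqref{2.8} are automatic: they amount precisely to the pointwise membership $(\Theta(z),\theta(z))\in V(\mathcal U,\mathcal U'')$ for every $z\in C\otimes C'$, and this holds because the composite factors through the canonical map \eqref{comp2}, whose well-definedness says exactly that composing a pair in $V(\mathcal U,\mathcal U')$ with a pair in $V(\mathcal U',\mathcal U'')$ lands in $V(\mathcal U,\mathcal U'')$.

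Next I would check that $\Theta$ and $\theta$ are measurings of algebras, which is the classical composition of Sweedler measurings. For the $U$-component, one computes $\Theta(x\otimes x')(u_1u_2)=\Psi'(x')\bigl(\Psi(x_{(1)})(u_1)\,\Psi(x_{(2)})(u_2)\bigr)$ using that $\Psi$ is a measuring, and then applies the measuring property of $\Psi'$ to split this as $\Theta(x_{(1)}\otimes x'_{(1)})(u_1)\,\Theta(x_{(2)}\otimes x'_{(2)})(u_2)$, which matches $\Delta_{C\otimes C'}$; the unit condition follows from $\Psi'(x')(\epsilon_C(x)1_{U'})=\epsilon_C(x)\epsilon_{C'}(x')1_{U''}$. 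The same two-line computation handles $\theta$ with $A_L$, $A'_L$, $A''_L$ in place of $U$, $U'$, $U''$.

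The substantive step is the coproduct square of \eqref{2.8} for the composite, and this is where I expect the only real bookkeeping. The key point is that the tensor map $z\colon U\otimes_{A_L}U\to U''\otimes_{A''_L}U''$ attached to $z=x\otimes x'$ by Definition \ref{D2.3} coincides with the composite of the tensor maps attached to $x$ and to $x'$; indeed applying first the map for $(\Psi,\psi)$ and then the map for $(\Psi',\psi')$ sends $v^1\otimes v^2$ to $\Psi'(x'_{(1)})(\Psi(x_{(1)})(v^1))\otimes\Psi'(x'_{(2)})(\Psi(x_{(2)})(v^2))$, which is exactly $\Theta(z_{(1)})(v^1)\otimes\Theta(z_{(2)})(v^2)$. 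Granting this, I would verify the square by chaining the two coproduct compatibilities: for $u\in U$, set $w=\Psi(x)(u)$ and compute $\Delta''_L(\Theta(z)(u))=\Delta''_L(\Psi'(x')(w))=\Psi'(x'_{(1)})(w_{(1)})\otimes\Psi'(x'_{(2)})(w_{(2)})$ by the $\Delta_L$-square for $(\Psi',\psi')$; substituting $w_{(1)}\otimes w_{(2)}=\Delta'_L(\Psi(x)(u))=\Psi(x_{(1)})(u_{(1)})\otimes\Psi(x_{(2)})(u_{(2)})$ from the $\Delta_L$-square for $(\Psi,\psi)$ yields exactly $\Theta(z_{(1)})(u_{(1)})\otimes\Theta(z_{(2)})(u_{(2)})$, the required identity. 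Well-definedness of the composite tensor map over $A_L$ is guaranteed by Lemma \ref{L2.4} applied to the composite, which is legitimate since we have already verified that $(\Theta,\theta)$ satisfies the source and target conditions of \eqref{2.7}. The main obstacle, such as it is, is purely notational: keeping the two independent Sweedler indices, one from $C$ and one from $C'$, aligned with the tensor-coalgebra coproduct throughout the coproduct square.
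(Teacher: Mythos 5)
Your proposal is correct and follows essentially the same route as the paper: the heart of both arguments is the chained coproduct computation $\Delta''_L(\Theta(x\otimes x')(u))=\Psi'(x'_{(1)})(\Psi(x_{(1)})(u_{(1)}))\otimes\Psi'(x'_{(2)})(\Psi(x_{(2)})(u_{(2)}))$, with the algebra-measuring properties of the two components verified by the standard Sweedler composition. The only difference is that you spell out what the paper leaves as ``clear''---namely that the diagrams \eqref{2.7} and the counit square hold automatically because the composite factors through $V(\mathcal U,\mathcal U'')$ via \eqref{comp2}, and that well-definedness of the tensor map follows from Lemma \ref{L2.4}---which is a harmless (indeed slightly more careful) elaboration, not a different method.
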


\begin{proof}
It is easy to verify that the compositions 
\begin{equation}
\begin{array}{c} C\otimes C'\xrightarrow{\Psi\otimes \Psi'}Vect_k(U,U')\otimes Vect_k(U',U'')\xrightarrow{\quad\circ\quad}Vect_k(U,U'')\\
C\otimes C'\xrightarrow{\psi\otimes \psi'}Vect_k(A_L,A_L')\otimes Vect_k(A_L',A_L'')\xrightarrow{\quad\circ\quad}Vect_k(A_L,A_L'')\\
\end{array}
\end{equation} give coalgebra measurings from $U$ to $U''$ and from $A_L$ to $A_L''$ respectively. For $x\otimes x'\in C\otimes C'$ and $u\in U$, we also see that
\begin{equation}
\begin{array}{ll}
\Delta_L''((x\otimes x')(u))=\Delta_L''(x'(x(u)))&=x'_{(1)}(x(u)_{(1)})\otimes x'_{(2)}(x(u)_{(2)}) \\
&=x'_{(1)}(x_{(1)}(u_{(1)}))\otimes x'_{(2)}(x_{(2)}(u_{(2)}))\\
&=(x'\otimes x)_{(1)}(u_{(1)})\otimes (x'\otimes x)_{(2)}(u_{(2)})\\
\end{array}
\end{equation} It is also clear that the morphism in \eqref{xcomp2} satisfies all the other conditions in Definition \ref{D2.3}. This proves the result. 
\end{proof}

\begin{Thm}\label{T2.7}
The category  $HAlg_k$  of Hopf algebroids is enriched over the category $CoCoalg_k$ of cocommutative $k$-coalgebras.
\end{Thm}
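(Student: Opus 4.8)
The plan is to exhibit the data of a $CoCoalg_k$-enrichment and then verify the coherence axioms using the uniqueness clause of the universal property in Theorem \ref{P2.5}. For the hom-objects I take $HAlg_k(\mathcal U,\mathcal U'):=\mathcal M_c(\mathcal U,\mathcal U')$, which is an object of $CoCoalg_k$ by construction. To define composition, I would start from the two universal measurings $(\Phi,\phi):\mathcal M_c(\mathcal U,\mathcal U')\longrightarrow V(\mathcal U,\mathcal U')$ and $(\Phi',\phi'):\mathcal M_c(\mathcal U',\mathcal U'')\longrightarrow V(\mathcal U',\mathcal U'')$ and feed them into Theorem \ref{P2.6}. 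Since the tensor product of two cocommutative coalgebras is again cocommutative, Theorem \ref{P2.6} produces a cocommutative measuring $\mathcal M_c(\mathcal U,\mathcal U')\otimes \mathcal M_c(\mathcal U',\mathcal U'')\longrightarrow V(\mathcal U,\mathcal U'')$ from $\mathcal U$ to $\mathcal U''$. The universal property of $\mathcal M_c(\mathcal U,\mathcal U'')$ then yields a unique coalgebra morphism
\[
c_{\mathcal U,\mathcal U',\mathcal U''}:\mathcal M_c(\mathcal U,\mathcal U')\otimes \mathcal M_c(\mathcal U',\mathcal U'')\longrightarrow \mathcal M_c(\mathcal U,\mathcal U'')
\]
in $CoCoalg_k$, which I take as the composition law.

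For the unit, observe that $1\in k$ is grouplike, so sending $1$ to the identity pair $(\mathrm{id}_U,\mathrm{id}_{A_L})\in V(\mathcal U,\mathcal U)$ defines a measuring $k\longrightarrow V(\mathcal U,\mathcal U)$ (a grouplike element measures via an honest unital algebra morphism, here the identity). Factoring this through the universal property of Theorem \ref{P2.5} gives a coalgebra morphism $j_{\mathcal U}:k\longrightarrow \mathcal M_c(\mathcal U,\mathcal U)$, which serves as the unit of the enrichment. Here I use that $k$ is the monoidal unit of $CoCoalg_k$.

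The remaining work is to check the associativity and unit diagrams, and here the strategy is entirely formal: every morphism in sight lands in a universal measuring coalgebra, so two parallel coalgebra morphisms agree as soon as they factor the same underlying measuring into the appropriate $V(-,-)$. Concretely, for associativity I would post-compose both legs of the associativity diagram
\[
\mathcal M_c(\mathcal U,\mathcal U')\otimes \mathcal M_c(\mathcal U',\mathcal U'')\otimes \mathcal M_c(\mathcal U'',\mathcal U''')\longrightarrow \mathcal M_c(\mathcal U,\mathcal U''')
\]
with the universal measuring $(\Phi,\phi)$ into $V(\mathcal U,\mathcal U''')$ and check that both become the single threefold composite measuring; this uses only that the composition maps $V(\mathcal U,\mathcal U')\otimes V(\mathcal U',\mathcal U'')\to V(\mathcal U,\mathcal U'')$ of \eqref{comp2} are associative, which holds because composition of $k$-linear maps is associative. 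The uniqueness clause of Theorem \ref{P2.5} then forces the two legs to coincide. The unit axioms are handled the same way, using that composing with the identity pair in $V(\mathcal U,\mathcal U)$ leaves a measuring unchanged.

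I expect the only genuinely delicate point to be the bookkeeping with the associativity and unit constraints of the symmetric monoidal structure on $CoCoalg_k$: one must make sure the universal measurings are composed along the correct associator and unitors, so that the uniqueness argument is being applied to a genuinely parallel pair of coalgebra morphisms with the same source and target. Once the diagrams are set up with the correct coherence isomorphisms, no further computation is needed, since the whole verification reduces to associativity and unitality of composition of linear maps together with the uniqueness in the universal property.
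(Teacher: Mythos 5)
Your proposal is correct and follows essentially the same route as the paper: hom-objects $\mathcal M_c(\mathcal U,\mathcal U')$, composition obtained by feeding the universal measurings into Proposition \ref{P2.6} and then factoring through the universal property of Proposition \ref{P2.5}, and the unit $k\longrightarrow \mathcal M_c(\mathcal U,\mathcal U)$ induced by (scalar multiples of) the identity pair in $V(\mathcal U,\mathcal U)$. Your explicit verification of the associativity and unit diagrams via the uniqueness clause of Proposition \ref{P2.5} is exactly the standard completion of the argument that the paper leaves implicit, so it is a welcome addition rather than a divergence.
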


\begin{proof}
Given Hopf algebroids $\mathcal U=(\mathcal U_L,S)$ and $\mathcal U'=(\mathcal U_L',S')$, we consider the ``hom object'' $\mathcal M_c(\mathcal U,\mathcal U')$ which lies in $CoCoalg_k$. The composition of these hom objects is obtained as follows: if $\mathcal U$, $\mathcal U'$ and $\mathcal U''$  are Hopf algebroids, we obtain as in Proposition \ref{P2.6} a measuring
\begin{equation}\label{xzcom}
\mathcal M_c(\mathcal U,\mathcal U')\otimes \mathcal M_c(\mathcal U',\mathcal U'')\longrightarrow V(\mathcal U,\mathcal U'')  
\end{equation} Applying the universal property in Proposition \ref{P2.5}, we now have a morphism of coalgebras $\mathcal M_c(\mathcal U,\mathcal U')\otimes \mathcal M_c(\mathcal U',\mathcal U'')\longrightarrow \mathcal M_c(\mathcal U,\mathcal U'')  $. The unit object in $CoCoalg_k$ is $k$ treated as a coalgebra over itself. Then, for any Hopf algebroid
$\mathcal U=(\mathcal U_L,S)=(U,A_L,s_L,t_L,\Delta_L,\epsilon_L,S)$, we have a unit map \begin{equation} k\longrightarrow  V(\mathcal U,\mathcal U)\subseteq Vect_k(U,U)\times Vect_k(A_L,A_L) \qquad t\mapsto (t\cdot id_{U},t\cdot id_{A_L})\end{equation} Again from the universal property in Proposition \ref{P2.5},  this induces a morphism $k\longrightarrow \mathcal M_c(\mathcal U,\mathcal U)$ of cocommutative coalgebras. Together with the composition
of hom objects in \eqref{xzcom}, we see that $HAlg_k$ is enriched over $CoCoalg_k$.
\end{proof}

From now onwards, we will denote by $HALG_k$ the category of Hopf algebroids enriched over the symmetric monoidal category $CoCoalg_k$ of cocommutative $k$-algebras. This enriched category will be needed in Section 4. 

\section{Morphisms on cyclic (co)homology and Hopf-Galois maps}

Let $\mathcal U=(\mathcal U_L,S)=(U,A_L,s_L,t_L,\Delta_L,\epsilon_L,S)$  be a Hopf algebroid over $k$. We now recall from \cite[$\S$ 2]{KoP} the cocyclic module $C^\bullet(\mathcal U)$ that computes the cyclic cohomology of the Hopf algebroid 
$\mathcal U$. For $n\geq 1$, we put
\begin{equation}\label{cocych}
C^n(\mathcal U):=\underset{\mbox{$n$-times}}{\underbrace{U\otimes_{A_L}\otimes \dots \otimes_{A_L}U}}
\end{equation} using the $(A_L,A_L)$-bimodule structure as in \eqref{2.1eq} and set $C^0(\mathcal U):=A_L$. For $n\geq 1$, the coface maps $\delta_i:C^n(\mathcal U)\longrightarrow C^{n+1}(\mathcal U)$ are defined by
\begin{equation}\label{degco1}
\delta_i(u^1\otimes ... \otimes u^n):=\left\{
\begin{array}{ll}
1\otimes u^1\otimes ...\otimes u^n & \mbox{if $i=0$}\\
u^1\otimes ....\otimes \Delta_Lu^i\otimes...\otimes u^n & \mbox{if $1\leq i\leq n$}\\
u^1\otimes ....\otimes u^n\otimes 1&\mbox{if $i=n+1$}\\
\end{array}\right.
\end{equation} For $n=0$, there are two maps 
$\delta_0:=t_L:C^0(\mathcal U)=A_L\longrightarrow C^1(\mathcal U)=U$ and   
$\delta_1:=s_L:C^0(\mathcal U)=A_L\longrightarrow C^1(\mathcal U)=U
$. The codegeneracy maps $\sigma_i:C^n(\mathcal U)\longrightarrow C^{n-1}(\mathcal U)$ are given by
\begin{equation}\label{cofac}
\sigma_i(u^1\otimes ... \otimes u^n):=u^1\otimes ...\otimes \epsilon_L(u^{i+1})\cdot u^{i+2}\otimes ...\otimes u^n\qquad 0\leq i\leq n-1
\end{equation} The cocyclic operator $\tau_n:C^n(\mathcal U)\longrightarrow C^n(\mathcal U)$ is defined by setting
\begin{equation}\label{cocy2}
\tau_n(u^1\otimes ...\otimes u^n):=(S(u^1)_{(1)}\cdot u^2)\otimes ....\otimes (S(u^1)_{(n-1)}\cdot u^n)\otimes S(u^1)_{(n)}
\end{equation} Since we have assumed that the antipode $S$ is involutive, it follows from \cite[Theorem 2.1]{KoP} that  $C^\bullet(\mathcal U)$ is indeed a cocyclic module. We will denote by $HC^\bullet(\mathcal U)$ the   cyclic cohomology groups of the Hopf algebroid $\mathcal U$. The Hochschild cohomology groups of the Hopf algebroid $\mathcal U$ will then be denoted by $HH^\bullet(\mathcal U)$. 

\smallskip
Let $\mathcal U$, $\mathcal U'$ be Hopf algebroids and let $(\Psi,\psi):C\longrightarrow V(\mathcal U,\mathcal U')$ be a cocommutative measuring from $\mathcal U$ to $\mathcal U'$. For each $x\in  C$, we  now define a family of morphisms
\begin{equation}\label{35e}
\overline{\Psi}^n(x):C^n(\mathcal U)\longrightarrow C^n(\mathcal U')\qquad \overline\Psi^n(x)(u^1\otimes ...\otimes u^n):=x(u^1\otimes ...\otimes u^n)=x_{(1)}(u^1)\otimes ...\otimes x_{(n)}(u^n)\qquad\forall \textrm{ }n\geq 0
\end{equation} We now prove the first main result of this section.

\begin{thm}\label{P3.1b}
Let $C$ be a cocommutative coalgebra and let $(\Psi,\psi):C\longrightarrow V(\mathcal U,\mathcal U')$ be a measuring of Hopf algebroids. For each $x\in  C$, the family
$\{\overline\Psi^n(x):C^n(\mathcal U)\longrightarrow C^n(\mathcal U')\}_{n\geq 0}$ gives a morphism of cocyclic modules. In particular, we have  induced morphisms
\begin{equation}\overline\Psi^\bullet_{hoc}(x):HH^\bullet(\mathcal U)\longrightarrow HH^\bullet(\mathcal U')\qquad \overline\Psi^\bullet_{cy}(x):HC^\bullet(\mathcal U)\longrightarrow HC^\bullet(\mathcal U')
\end{equation} on Hochschild and cyclic cohomologies for each $x\in  C$. 
\end{thm}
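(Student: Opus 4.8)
The plan is to verify directly that, for each fixed $x\in C$, the family $\{\overline{\Psi}^n(x)\}_{n\geq 0}$ commutes with every structure map of the cocyclic modules $C^\bullet(\mathcal U)$ and $C^\bullet(\mathcal U')$, and then to invoke functoriality of cyclic and Hochschild cohomology. I would begin with well-definedness of each $\overline{\Psi}^n(x)$ on the balanced tensor product $C^n(\mathcal U)=U\otimes_{A_L}\cdots\otimes_{A_L}U$. The balancing relation at the $j$-th slot reads $t_L(a)u^j\otimes u^{j+1}=u^j\otimes s_L(a)u^{j+1}$, and that this is respected by $x_{(j)}\otimes x_{(j+1)}$ is exactly the computation of Lemma \ref{L2.4}, with $x$ replaced by the relevant Sweedler component and with cocommutativity of $C$ entering as it does there. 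Thus well-definedness follows by a slot-wise iteration of Lemma \ref{L2.4}.

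I would then treat the cosimplicial structure. For the outer cofaces $\delta_0$ and $\delta_{n+1}$, which insert a unit, the point is the unit axiom $x_{(1)}(1_U)=\epsilon_C(x_{(1)})1_{U'}$ of the measuring (from \eqref{2.5ft}); combined with the counit axiom of $C$ this collapses the extra Sweedler factor and gives $\overline{\Psi}^{n+1}(x)\circ\delta_0=\delta_0'\circ\overline{\Psi}^n(x)$, and similarly for $\delta_{n+1}$. For the inner cofaces $\delta_i$ with $1\leq i\leq n$, which apply $\Delta_L$ in the $i$-th slot, compatibility follows from the coproduct square in \eqref{2.8}, which yields $\Delta_L'(x_{(i)}(u^i))=(x_{(i)})_{(1)}(u^i_{(1)})\otimes(x_{(i)})_{(2)}(u^i_{(2)})$, together with coassociativity of $C$ to identify $(x_{(i)})_{(1)}\otimes(x_{(i)})_{(2)}$ with $x_{(i)}\otimes x_{(i+1)}$. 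The codegeneracies $\sigma_i$ contract via $\epsilon_L$ and the left action of $s_L$; here I would use the counit square in \eqref{2.8} (giving $\epsilon_L'\circ x=x\circ\epsilon_L$), the source-map square in \eqref{2.7}, and multiplicativity of $\Psi$ to push $x$ through the contraction.

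The main obstacle, and the heart of the argument, is compatibility with the cocyclic operator $\tau_n$. Writing $v^j=x_{(j)}(u^j)$ and expanding $\tau_n'(\overline{\Psi}^n(x)(u^1\otimes\cdots\otimes u^n))$, I would first use the antipode square in \eqref{2.7} to get $S'(v^1)=x_{(1)}(S(u^1))$, then apply the coproduct square \eqref{2.8} repeatedly to obtain $S'(v^1)_{(j)}=(x_{(1)})_{(j)}(S(u^1)_{(j)})$, and finally use multiplicativity of the measuring to expand each product $S'(v^1)_{(j)}v^{j+1}$. Comparing with $\overline{\Psi}^n(x)(\tau_n(u^1\otimes\cdots\otimes u^n))$, where multiplicativity is applied slot by slot, one finds the two sides differ only in how the total iterated coproduct $\Delta^{(2n-2)}_C(x)$ (of $2n-1$ factors) is distributed over the elements $S(u^1)_{(1)},\ldots,S(u^1)_{(n)}$ and the entries $u^2,\ldots,u^n$: the two distributions are related by a permutation of the Sweedler components of $x$. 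This is precisely where cocommutativity of $C$ is indispensable, since it makes $\Delta^{(2n-2)}_C(x)$ invariant under permutation of its factors, forcing the two expressions to coincide. I expect this index-matching to be the most delicate bookkeeping in the proof.

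Finally, once $\{\overline{\Psi}^n(x)\}_{n\geq 0}$ is established as a morphism of cocyclic modules, the induced maps $\overline{\Psi}^\bullet_{hoc}(x)$ and $\overline{\Psi}^\bullet_{cy}(x)$ on Hochschild and cyclic cohomology follow formally, since the passage from a cocyclic module to its Hochschild and cyclic cohomology is functorial.
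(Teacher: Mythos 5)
Your proposal is correct and follows essentially the same route as the paper's proof: a direct verification that $\overline\Psi^n(x)$ commutes with the cofaces (outer ones via the unit axiom of the measuring and the counit of $C$, inner ones via the coproduct square and coassociativity), with the codegeneracies (via the counit and source-map squares plus multiplicativity), and with the cocyclic operator, where — exactly as in the paper — cocommutativity of $C$ is the key point, used to permute the $2n-1$ Sweedler components of $x$ so that the two distributions over $S(u^1)_{(1)},\dots,S(u^1)_{(n)}$ and $u^2,\dots,u^n$ coincide, after which the antipode and iterated coproduct squares of \eqref{2.7} and \eqref{2.8} complete the identification. Your slot-wise iteration of Lemma \ref{L2.4} for well-definedness matches how the paper handles it, having established the two-slot case there.
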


\begin{proof} For each $x\in  C$, we start by showing that $\overline\Psi^{n+1}(x)\circ \delta_i=\delta'_i\circ \overline\Psi^{n}(x):C^n(\mathcal U)\longrightarrow 
C^{n+1}(\mathcal U')$, where $\delta_i$ and $\delta'_i$ are the coface maps on the respective cocyclic modules $C^\bullet(\mathcal U)$ and $C^\bullet(\mathcal U')$. If $i=0$ or $i=n+1$, this is immediately clear from the definition in \eqref{degco1} and the action in \eqref{35e}. For $1\leq i\leq n$, we see that
\begin{equation*}
\begin{array}{ll}
\overline\Psi^{n+1}(x)\circ \delta_i(u^1\otimes ...\otimes u^n)&=\overline\Psi^{n+1}(x)(u^1\otimes ....\otimes \Delta_Lu^i\otimes...\otimes u^n)\\
&=x_{(1)}(u^1)\otimes ... \otimes x_{(i)}(u^i_{(1)})\otimes x_{(i+1)}(u^i_{(2)})\otimes ....\otimes x_{(n+1)}(u^n)\\
&=x_{(1)}(u^1)\otimes \Delta_L(x_{(i)}(u^i))\otimes ... \otimes x_{(n)}(u^n)=\delta'_i\circ \overline\Psi^{n}(x)(u^1\otimes ...\otimes u^n)\\
\end{array}
\end{equation*} 
Next, we verify that $\overline\Psi^{n-1}(x)\circ \sigma_i=\sigma'_i\circ \overline\Psi^{n}(x)$, where  $\sigma_i$ and $\sigma'_i$ are the codegeneracies on the respective cocyclic modules $C^\bullet(\mathcal U)$ and $C^\bullet(\mathcal U')$. 
\begin{equation*}
\begin{array}{ll}
\overline\Psi^{n-1}(x)\circ \sigma_i(u^1\otimes ...\otimes u^n)&=\overline\Psi^{n-1}(x)(u^1\otimes ...\otimes \epsilon_L(u^{i+1})\cdot u^{i+2}\otimes ...\otimes u^n)\\
&=x_{(1)}(u^1)\otimes ...\otimes x_{(i+1)}(\epsilon_L(u^{i+1})\cdot u^{i+2}) \otimes ...\otimes x_{(n-1)}(u^n)\\
&=x_{(1)}(u^1)\otimes ...\otimes x_{(i+1)}(\epsilon_L(u^{i+1}))\cdot x_{(i+2)}(u^{i+2}) \otimes ...\otimes x_{(n)}(u^n)\\
&=x_{(1)}(u^1)\otimes ...\otimes \epsilon_L'(x_{(i+1)}(u^{i+1}))\cdot x_{(i+2)}(u^{i+2}) \otimes ...\otimes x_{(n)}(u^n)\\
&=\sigma'_i\circ \overline\Psi^{n}(x)(u^1\otimes ...\otimes u^n)\\
\end{array}
\end{equation*} Finally, we show that $\overline\Psi^n(x)\circ \tau_n=\tau'_n\circ \overline\Psi^n(x)$, where $\tau_n$ and $\tau'_n$ are the cocyclic operators on the respective cocyclic modules $C^\bullet(\mathcal U)$ and $C^\bullet(\mathcal U')$:
\begin{equation*}
\begin{array}{ll}
\overline\Psi^{n}(x)\circ \tau_n(u^1\otimes ...\otimes u^n)&=\overline\Psi^{n}(x)((S(u^1)_{(1)}\cdot u^2)\otimes ....\otimes (S(u^1)_{(n-1)}\cdot u^n)\otimes S(u^1)_{(n)})\\
&= x_{(1)}((S(u^1)_{(1)}\cdot u^2))\otimes ....\otimes x_{(n-1)}((S(u^1)_{(n-1)}\cdot u^n)\otimes x_{(n)}(S(u^1)_{(n)})\\
&=x_{(1)}(S(u^1)_{(1)})\cdot x_{(2)}(u^2)\otimes ....\otimes x_{(2n-3)}(S(u^1)_{(n-1)})\cdot x_{(2n-2)}(u^n)\otimes x_{(2n-1)}(S(u^1)_{(n)})\\
&=x_{(1)}(S(u^1)_{(1)})\cdot x_{(n+1)}(u^2)\otimes ....\otimes x_{(n-1)}(S(u^1)_{(n-1)})\cdot x_{(2n-1)}(u^n)\otimes x_{(n)}(S(u^1)_{(n)})\\
&=(x_{(1)}(S(u^1)))_{(1)}\cdot x_{(2)}(u^2)\otimes ....\otimes (x_{(1)}(S(u^1)))_{(n-1)} \cdot x_{(n)}(u^n)\otimes (x_{(1)}(S(u^1)))_{(n)} \\
&=(S'(x_{(1)}(u^1)))_{(1)}\cdot x_{(2)}(u^2)\otimes ....\otimes (S'(x_{(1)}(u^1)))_{(n-1)} \cdot x_{(n)}(u^n)\otimes (S'(x_{(1)}(u^1)))_{(n)} \\
&=\tau'_n\circ \overline\Psi^n(x)(u^1\otimes ...\otimes u^n)\\
\end{array}
\end{equation*}
\end{proof}

We continue with a Hopf algebroid $\mathcal U=(\mathcal U_L,S)=(U,A_L,s_L,t_L,\Delta_L,\epsilon_L,S)$. As mentioned in Section 2, we set $A_R:=A_L^{op}=A^{op}$. Following \cite[$\S$ 4]{BoK}, we also set
\begin{equation}\label{r3.6}
 s_R:=t_L\qquad t_R:=S\circ t_L=s_L\qquad  \epsilon_R:=\epsilon_L\circ S:U\longrightarrow A_R
\end{equation} Then, $U$ becomes an $(A_R,A_R)$-bimodule by left and right multiplication as follows
\begin{equation}\label{r3.7rl}
a_1\cdot u\cdot a_2:=s_R(a_1)us_R(a_2)
\end{equation}

Now let
$\mathcal U$, $\mathcal U'$ be Hopf algebroids and consider $(F,f)\in V(\mathcal U,\mathcal U')$. From the conditions in \eqref{211y} and the definitions in \eqref{r3.6}, we already have
\begin{equation}\label{311y}
Fs_R=s_R'f\qquad Ft_R=t_R'f\qquad FS=S'F\qquad f\epsilon_R=\epsilon_R'F
\end{equation} We now need the following result.

\begin{lem}\label{L3.2}
Let $C$ be a cocommutative coalgebra and $(\Psi,\psi):C\longrightarrow V(\mathcal U,\mathcal U')$ a measuring of Hopf algebroids. Then, for each $x\in  C$, there is a well defined morphism
\begin{equation}\label{m311}
x:U\otimes_{A_R}U\longrightarrow U'\otimes_{A_R'}U'\qquad u^1\otimes u^2\mapsto x_{(1)}(u^1)\otimes x_{(2)}(u^2)
\end{equation} 
\end{lem}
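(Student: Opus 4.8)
The plan is to mimic exactly the computation already carried out in Lemma \ref{L2.4}, but now with respect to the right $A_R$-bimodule structure \eqref{r3.7rl} instead of the left structure \eqref{2.1eq}. What must be shown is that the formula $x(u^1\otimes u^2):=x_{(1)}(u^1)\otimes x_{(2)}(u^2)$ respects the balancing relation defining $U\otimes_{A_R}U$; that is, for every $a\in A_R=A_L^{op}$ we have $x\bigl((u^1\cdot a)\otimes u^2\bigr)=x\bigl(u^1\otimes (a\cdot u^2)\bigr)$, where by \eqref{r3.7rl} the right action is $u^1\cdot a=u^1 s_R(a)$ and the left action is $a\cdot u^2=s_R(a)u^2$. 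Since $s_R=t_L$ by \eqref{r3.6}, both of these are expressed entirely through $t_L$, so the compatibility conditions already available in \eqref{2.7} and \eqref{2.8} will apply directly.

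First I would expand $x\bigl(u^1 s_R(a)\otimes u^2\bigr)=x_{(1)}(u^1 s_R(a))\otimes x_{(2)}(u^2)$, and use the fact that $\Psi:C\longrightarrow Vect_k(U,U')$ is a measuring to split $x_{(1)}(u^1 s_R(a))=x_{(1)}(u^1)\,x_{(2)}(s_R(a))$, reindexing the Sweedler legs. Next I would use $s_R=t_L$ together with the square $F\circ t_L=t_L'\circ f$ from \eqref{2.7} (in its measuring form $x(t_L(a))=t_L'(x(a))$) to rewrite $x_{(2)}(s_R(a))=s_R'(x_{(2)}(a))$, again invoking $s_R'=t_L'$. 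At this point I would invoke cocommutativity of $C$ to move the scalar leg $x(a)$ past the identity legs, exactly as in the middle of the display in Lemma \ref{L2.4}, so as to transfer the factor $s_R'(x(a))$ from acting on the first tensor slot to acting on the second. Finally I would reassemble using the measuring property of $\Psi$ in the reverse direction, $s_R'(x_{(i)}(a))\,x_{(j)}(u^2)=x_{(i)}(s_R(a))\,x_{(j)}(u^2)=x_{(k)}(s_R(a)u^2)=x_{(k)}(a\cdot u^2)$, yielding $x\bigl(u^1\otimes (a\cdot u^2)\bigr)$ as desired.

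The only conceptual point — and the step I expect to carry the weight of the argument — is that the balancing here is over $A_R$ rather than $A_L$, so one must be careful that the relevant structure maps are $s_R=t_L$ on both the left and right slots (unlike the $A_L$-tensor, where the two slots are balanced against $t_L$ and $s_L$ respectively). Because \eqref{r3.7rl} uses $s_R$ on both sides, the computation is in fact slightly more uniform than in Lemma \ref{L2.4}: we only need the single square $F\circ t_L=t_L'\circ f$ rather than both the $s_L$ and $t_L$ squares. The cocommutativity of $C$ remains the essential ingredient that lets the intermediate scalar leg commute past the tensor factors so that the two sides of the balancing relation can be matched; this is where the proof genuinely uses the hypothesis on $C$.
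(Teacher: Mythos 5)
Your route coincides with the paper's: expand $x_{(1)}(u^1 s_R(a))=x_{(1)}(u^1)x_{(2)}(s_R(a))$ by the measuring property of $\Psi$, rewrite $x_{(2)}(s_R(a))=s_R'(x_{(2)}(a))$ using $s_R=t_L$ and the $t_L$-square of \eqref{2.7} (packaged as \eqref{311y}), move the factor across the tensor sign, and reassemble — the paper's display is essentially verbatim what you outline. But the step you single out as carrying the weight is misidentified: cocommutativity of $C$ is \emph{not} used in this lemma, and the paper's proof invokes it nowhere (contrast the proof of Lemma \ref{L2.4}, which contains an explicit parenthetical ``because $C$ is cocommutative''). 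The reason is the very asymmetry you noticed: in \eqref{r3.7rl} the right action is $u^1\cdot a=u^1s_R(a)$, i.e.\ by \emph{right} multiplication, so splitting off the scalar already produces the Sweedler legs in the order $x_{(1)},x_{(2)},x_{(3)}$ matching the linear order of the arguments $u^1,a,u^2$; no permutation of legs is needed. In Lemma \ref{L2.4} the right $A_L$-action is by \emph{left} multiplication $t_L(a)u^1$, which is what forces the leg swap and hence cocommutativity there. Moreover, the step that actually transfers $s_R'(x_{(2)}(a))$ from the first slot to the second is not a coalgebra manipulation at all: it is the defining balancing relation of the target tensor product $U'\otimes_{A_R'}U'$, namely $x_{(1)}(u^1)\cdot x_{(2)}(a)\otimes x_{(3)}(u^2)=x_{(1)}(u^1)\otimes x_{(2)}(a)\cdot x_{(3)}(u^2)$ with the middle leg held fixed — you state the goal (``respects the balancing relation'') correctly but then attribute this move to cocommutativity. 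Since $C$ is cocommutative by hypothesis, an extra leg swap is harmless and your argument still closes; the point is only that your concluding claim about where the hypothesis on $C$ enters is wrong: this particular well-definedness uses only the measuring property of $\Psi$ and the compatibilities \eqref{311y}, with the weight carried by the $A_R'$-balancing in the target.
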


\begin{proof}
We consider $u^1,u^2\in U$ and $a\in A_R$. Using the fact that $ \Psi:C\longrightarrow Vect_k(U,U')$  is a  measuring
and applying the conditions in \eqref{311y}, we see that for any $x\in C$ we have
\begin{equation*}
\begin{array}{ll}
x((u^1\cdot a)\otimes u^2)=x(u^1s_R(a)\otimes u^2)&=x_{(1)}(u^1s_R(a))\otimes x_{(2)}(u^2)
=x_{(1)}(u^1)x_{(2)}(s_R(a))\otimes x_{(3)}(u^2)\\
&=x_{(1)}(u^1)s_R'(x_{(2)}(a))\otimes x_{(3)}(u^2)\\
&=x_{(1)}(u^1)\cdot x_{(2)}(a)\otimes x_{(3)}(u^2)\\ 
&= x_{(1)}(u^1)\otimes x_{(2)}(a)\cdot x_{(3)}(u^2)= x_{(1)}(u^1)\otimes s_R'( x_{(2)}(a))x_{(3)}(u^2)\\
&= x_{(1)}(u^1)\otimes x_{(2)}(s_R(a))x_{(3)}(u^2)\\
&=x_{(1)}(u^1)\otimes x_{(2)}(s_R(a)u^2)=x_{(1)}(u^1)\otimes x_{(2)}(a\cdot u^2)=x(u^1\otimes (a\cdot u^2))\\
\end{array}
\end{equation*} It follows that the morphism in \eqref{m311} is well defined.  
\end{proof} 

We now recall from \cite[$\S$ 2.3.1]{KoP} the cyclic module $C_\bullet(\mathcal U)$ defining the cyclic homology of a Hopf algebroid $\mathcal U$. For $n\geq 0$, we use the bimodule 
structure in \eqref{r3.7rl} to set 
\begin{equation}\label{ych}
C_n(\mathcal U):=\underset{\mbox{$n$-times}}{\underbrace{U\otimes_{A_R}\otimes \dots \otimes_{A_R}U}}
\end{equation} and $C_0(\mathcal U):=A_R$. The face maps $d_i:C_n(\mathcal U)\longrightarrow C_{n-1}(\mathcal U)$ are defined by setting
\begin{equation}\label{faccy}
d_i(u^1\otimes ... \otimes u^n):=\left\{\begin{array}{ll}
\epsilon_R(u^1) \cdot u^2\otimes ...\otimes u^n &\mbox{if $i=0$}\\
u^1\otimes ...\otimes u^iu^{i+1}\otimes...\otimes u^n & \mbox{if $1\leq i\leq n-1$}\\
u^1\otimes ...\otimes u^{n-1} \cdot \epsilon_R(S(u^n)) &\mbox{if $i=n$}\\
\end{array}\right.
\end{equation}
The degeneracies $s_i:C_n(\mathcal U)\longrightarrow C_{n+1}(\mathcal U)$ are defined as
\begin{equation}\label{degvc}
s_i(u^1\otimes ... \otimes u^n):=\left\{
\begin{array}{ll}
1\otimes u^1\otimes ...\otimes u^n & \mbox{if $i=0$}\\
u^1\otimes ...\otimes u^i\otimes 1\otimes u^{i+1}\otimes....\otimes u^n & \mbox{if $1\leq i\leq n$} \\
\end{array}\right.
\end{equation} The cyclic operators $t_n:C_n(\mathcal U)\longrightarrow C_{n}(\mathcal U)$ are given by 
\begin{equation}\label{cyt4}
t_n(u^1\otimes ...\otimes u^n):=S(u^1_{(2)}...u_{(2)}^{n-1}u^n)\otimes u^1_{(1)}\otimes u^2_{(1)}\otimes ...\otimes u_{(1)}^{n-1}
\end{equation}  The Hochschild homology groups of the Hopf algebroid $\mathcal U$ will then be denoted by $HH_\bullet(\mathcal U)$ and the cyclic homology groups by $HC_\bullet(\mathcal U)$.
We will now prove the homological counterpart for Proposition \ref{P3.1b}.

\begin{thm}\label{P3.3c}
Let $C$ be a cocommutative coalgebra and let $(\Psi,\psi):C\longrightarrow V(\mathcal U,\mathcal U')$ be a measuring of Hopf algebroids. For each $x\in  C$, the family
\begin{equation} \label{315cl} \underline\Psi_n(x):C_n(\mathcal U)\longrightarrow C_n(\mathcal U') \qquad u^1\otimes ...\otimes u^n\mapsto x(u^1\otimes ...\otimes u^n)=x_{(1)}(u^1)\otimes ... \otimes x_{(n)}(u^n)
\end{equation} for $n\geq 0$ gives a morphism of cyclic modules. In particular, we have  induced morphisms
\begin{equation}\underline\Psi_\bullet^{hoc}(x):HH_\bullet(\mathcal U)\longrightarrow HH_\bullet(\mathcal U')\qquad \underline\Psi_\bullet^{cy}(x):HC_\bullet(\mathcal U)\longrightarrow HC_\bullet(\mathcal U')
\end{equation}
 on Hochschild and cyclic homologies for each $x\in  C$. 
\end{thm}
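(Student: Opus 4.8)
The plan is to verify that each $\underline\Psi_n(x)$ intertwines the three families of structure maps --- faces, degeneracies, and cyclic operators --- of the cyclic modules $C_\bullet(\mathcal U)$ and $C_\bullet(\mathcal U')$, exactly paralleling the cohomological argument in Proposition \ref{P3.1b}. First I would record that $\underline\Psi_n(x)$ is well defined: over a single $A_R$-balanced tensor this is Lemma \ref{L3.2}, and the general $n$-fold case follows by applying the same computation at each adjacent pair of slots, carrying the remaining Sweedler components of $x$ along by cocommutativity of $C$.

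For the face maps $d_i$ in \eqref{faccy} I would split into three cases. For $1\leq i\leq n-1$ the face map contracts $u^iu^{i+1}$; here the multiplicativity of the algebra measuring $\Psi$ gives $x(u^iu^{i+1})=x_{(1)}(u^i)x_{(2)}(u^{i+1})$, and after reindexing via cocommutativity this matches $d_i'\circ\underline\Psi_n(x)$. For $d_0$, which applies $\epsilon_R$, and for $d_n$, which applies $\epsilon_R\circ S$, I would use the strict intertwining relations $x\epsilon_R=\epsilon_R'x$ and $xS=S'x$ recorded in \eqref{311y} (valid because $(\Psi(x),\psi(x))\in V(\mathcal U,\mathcal U')$ for each $x$), combining them with the source-map compatibility $xs_R=s_R'x$ to handle the $A_R$-action appearing in $d_n$. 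The degeneracies $s_i$ in \eqref{degvc} insert a unit $1_U$, so compatibility reduces to the unit axiom $x(1_U)=\epsilon_C(x)1_{U'}$ of the measuring in \eqref{2.5ft}.

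The substantive step is compatibility with the cyclic operator $t_n$ of \eqref{cyt4}. I would expand $\underline\Psi_n(x)\circ t_n$ by first distributing $x$ over the product $u^1_{(2)}\cdots u^{n-1}_{(2)}u^n$ inside the antipode using multiplicativity of $\Psi$, then pulling $x$ through $S$ by $xS=S'x$, and finally converting the pairs $x_{(j)}(u^j_{(1)})\otimes x_{(j')}(u^j_{(2)})$ into $(x_{(j)}(u^j))_{(1)}\otimes(x_{(j)}(u^j))_{(2)}$ by means of the coproduct-compatibility diagram \eqref{2.8}. I expect this to be the main obstacle, since it requires tracking a large number of Sweedler components of $x$ and repeatedly realigning their indices --- a bookkeeping that is legitimate only because $C$ is cocommutative, just as in the final display of the proof of Proposition \ref{P3.1b}. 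Comparing the resulting expression with $t_n'\circ\underline\Psi_n(x)$ then yields the desired equality, and the induced morphisms $\underline\Psi_\bullet^{hoc}(x)$ and $\underline\Psi_\bullet^{cy}(x)$ on $HH_\bullet$ and $HC_\bullet$ follow formally by passing to homology.
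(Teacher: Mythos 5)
Your proposal is correct and follows essentially the same route as the paper's proof: well-definedness via Lemma \ref{L3.2}, a routine check of the faces and degeneracies using \eqref{311y} and the unit axiom (which the paper leaves implicit as ``easily verified''), and for the cyclic operator the identical sequence of steps --- distributing $x$ multiplicatively inside the antipode, pulling it through via $xS=S'x$, reindexing Sweedler components by cocommutativity, and reassembling coproducts via \eqref{2.8}. No substantive difference from the paper's argument.
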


\begin{proof} As a consequence of Lemma \ref{L3.2}, we know that the morphisms in $\underline\Psi_n(x)$  in \eqref{315cl} are well defined.
Using the properties in  \eqref{311y} and the fact that $ \Psi:C\longrightarrow Vect_k(U,U')$ is a measuring, it may easily be verified that the maps $\underline\Psi_\bullet(x)$ commute with the respective face maps and degeneracy maps on the cyclic modules $C_\bullet(\mathcal U)$ and $C_\bullet(\mathcal U')$. Moreover, if $t_n$ and
$t_n'$ are the respective cyclic operators on  $C_\bullet(\mathcal U)$ and $C_\bullet(\mathcal U')$, we have for each $x\in  C$
\begin{equation*}
\begin{array}{ll}
x(t_n(u^1\otimes ...\otimes u^n))&=x(S(u^1_{(2)}...u_{(2)}^{n-1}u^n)\otimes u^1_{(1)}\otimes u^2_{(1)}\otimes ...\otimes u_{(1)}^{n-1})\\
&= x_{(1)}(S(u^1_{(2)}...u_{(2)}^{n-1}u^n) )\otimes x_{(2)}(u^1_{(1)})\otimes x_{(3)}(u^2_{(1)})\otimes ...\otimes x_{(n)}(u_{(1)}^{n-1}) \\
&=  S'(x_{(1)}(u^1_{(2)})...x_{(n-1)}(u_{(2)}^{n-1})x_{(n)}(u^n)) \otimes x_{(n+1)}(u^1_{(1)})\otimes  ...\otimes x_{(2n-1)}(u_{(1)}^{n-1}) \\
&=  S'(x_{(2)}(u^1_{(2)})...x_{(2n-2)}(u_{(2)}^{n-1})x_{(2n-1)}(u^n)) \otimes x_{(1)}(u^1_{(1)})\otimes  ...\otimes x_{(2n-3)}(u_{(1)}^{n-1}) \qquad \mbox{(as $C$ is cocommutative)}\\
&=  S'(x_{(1)}(u^1)_{(2)} ... x_{(n-1)}(u^{n-1})_{(2)}x_{(n)}(u^n)) \otimes x_{(1)}(u^1)_{(1)}\otimes  ...\otimes x_{(n-1)}(u^{n-1})_{(1)}\\
&=t_n'(x_{(1)}(u^1)\otimes ... \otimes x_{(n)}(u^n))\\
\end{array}
\end{equation*}
\end{proof} 

Our final aim in this section is to show that the morphisms induced by a measuring of Hopf algebroids are well behaved with respect to cyclic duality. More precisely, we know from
\cite[$\S$ 2.3.3]{KoP} that there are Hopf-Galois maps
\begin{equation}\label{hg3}
\xi_n(\mathcal U): C_n(\mathcal U)\overset{\cong}{\longrightarrow}C^n(\mathcal U)\qquad u^1\otimes ...\otimes u^n\mapsto u^1_{(1)}\otimes u^1_{(2)}u^2_{(1)}\otimes u^1_{(3)}u^2_{(2)}u^3_{(1)}\otimes ...\otimes u^1_{(n)}u^2_{(n-1)}....u_{(2)}^{n-1}u^n
\end{equation}
inducing isomorphisms between $C_\bullet(\mathcal U)$ and $C^\bullet(\mathcal U)$. We now have the following result.

\begin{thm}\label{P3.4x}
Let $C$ be a cocommutative coalgebra and let $(\Psi,\psi):C\longrightarrow V(\mathcal U,\mathcal U')$ be a measuring of Hopf algebroids. Then for each $x\in  C$, the following diagram commutes
\begin{equation}\label{320cd}
\begin{CD}
 C_n(\mathcal U) @>\xi_n(\mathcal U)>>C^n(\mathcal U)\\
 @V \underline\Psi_n(x)VV @VV \overline\Psi^n(x)V\\
  C_n(\mathcal U') @>\xi_n(\mathcal U')>>C^n(\mathcal U')\\
\end{CD}
\end{equation}
\end{thm}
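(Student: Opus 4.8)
The plan is to verify the identity $\overline\Psi^n(x)\circ \xi_n(\mathcal U)=\xi_n(\mathcal U')\circ \underline\Psi_n(x)$ by evaluating both composites on a generator $u^1\otimes\cdots\otimes u^n$ and comparing the outcomes slot by slot. First I would record two ``iterated'' consequences of the measuring axioms that get used repeatedly. Since $\Psi$ is a measuring of algebras, an easy induction on $m$ from the first identity in \eqref{2.5ft} gives $x(a_1\cdots a_m)=x_{(1)}(a_1)\cdots x_{(m)}(a_m)$ for $a_1,\dots,a_m\in U$, the subscripts denoting the iterated coproduct of $x$. Likewise, iterating the coproduct-compatibility square in \eqref{2.8} and using coassociativity of $\Delta_L$ and $\Delta'_L$ yields $x(u)_{(1)}\otimes\cdots\otimes x(u)_{(m)}=x_{(1)}(u_{(1)})\otimes\cdots\otimes x_{(m)}(u_{(m)})$, that is, $\big(x(u)\big)_{(\ell)}=x_{(\ell)}(u_{(\ell)})$ inside any iterated coproduct.

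Next I would compute the top-right composite. Applying $\xi_n(\mathcal U)$ to $u^1\otimes\cdots\otimes u^n$ produces, in its $j$-th slot, the product $u^1_{(j)}u^2_{(j-1)}\cdots u^{j-1}_{(2)}u^j=\prod_{i=1}^{j}u^i_{(j-i+1)}$. Applying $\overline\Psi^n(x)$ then acts on the $j$-th slot by $x_{(j)}$, and the product identity above turns this into $\prod_{i=1}^{j}(x_{(j)})_{(i)}\big(u^i_{(j-i+1)}\big)$. For the left-bottom composite, $\underline\Psi_n(x)$ sends $u^i$ to $v^i:=x_{(i)}(u^i)$, and $\xi_n(\mathcal U')$ then has $j$-th slot $\prod_{i=1}^{j}v^i_{(j-i+1)}$; by the coproduct identity $v^i_{(j-i+1)}=(x_{(i)})_{(j-i+1)}\big(u^i_{(j-i+1)}\big)$, so that slot becomes $\prod_{i=1}^{j}(x_{(i)})_{(j-i+1)}\big(u^i_{(j-i+1)}\big)$.

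The key observation is then that the two $j$-th slots are products, taken in the same order $i=1,\dots,j$, of terms applied to exactly the same arguments $u^i_{(j-i+1)}$; the composites differ only in which coproduct component of $x$ is fed into the slot indexed by the pair $(i,j)$, namely $(x_{(j)})_{(i)}$ on one side versus $(x_{(i)})_{(j-i+1)}$ on the other. Reading all tensor factors at once, each composite splits $x$ (by coassociativity) into the $N=n(n+1)/2$ iterated-coproduct components indexed by the pairs $(i,j)$ with $1\le i\le j\le n$, and feeds these into one and the same multilinear expression $E$ — the tensor over $j$ of the products over $i$ — the only discrepancy being the bijection used to match the $N$ components to the $N$ argument slots. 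Since the two bijections differ by a permutation in $S_N$, and since the $N$-fold iterated coproduct of a cocommutative coalgebra is invariant under permuting its tensor factors, the two composites present the same element of $C^n(\mathcal U')$ to $E$, and hence agree.

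I expect the only genuine work to be the index bookkeeping that pins down the two matchings of coproduct components to slots and shows they differ by a nontrivial permutation — cocommutativity is essential rather than cosmetic here (already at $n=3$ the $x$-components attached to the slots $(2,2)$ and $(1,3)$ appear in swapped order, so a genuine transposition must be absorbed). Everything else reduces to the two iterated identities above together with the fact, established in Lemmas \ref{L2.4} and \ref{L3.2}, that all the maps in sight are well defined over the relevant balanced tensor products, so the computation may legitimately be carried out on representatives in the free tensor powers.
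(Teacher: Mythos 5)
Your proof is correct and takes essentially the same route as the paper's: both expand the two composites into the $N=n(n+1)/2$ iterated-coproduct components of $x$ (via the measuring identity and the iterated $\Delta_L$-compatibility from \eqref{2.8}, with well-definedness supplied by Lemmas \ref{L2.4} and \ref{L3.2}) and then absorb the mismatch in how those components are matched to the argument slots by cocommutativity of $C$. Your systematic bookkeeping of the two bijections $(i,j)\mapsto$ flat index, and the check that the discrepancy is first a genuine transposition at $n=3$, simply makes explicit the reindexing step that the paper performs telegraphically in the third line of its displayed computation.
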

\begin{proof}
We put $N:=n(n+1)/2$. Using the fact that $\Psi:C\longrightarrow Vect_k(U,U')$ is a measuring and that $C$ is cocommutative we have
\begin{equation*}
\begin{array}{ll}
x(\xi_n(\mathcal U)(u^1\otimes ...\otimes u^n)) &=x( u^1_{(1)}\otimes u^1_{(2)}u^2_{(1)}\otimes u^1_{(3)}u^2_{(2)}u^3_{(1)}\otimes ...\otimes u^1_{(n)}u^2_{(n-1)}....u_{(2)}^{n-1}u^n)\\
&= x_{(1)}(u^1_{(1)})\otimes x_{(2)}(u^1_{(2)})x_{(3)}(u^2_{(1)})\otimes  ...\otimes x_{(N+1-n)}(u^1_{(n)})....x_{(N-1)}(u_{(2)}^{n-1})x_{(N)}(u^n)\\
&= x_{(1)}(u^1_{(1)})\otimes x_{(2)}(u^1_{(2)})x_{(n+1)}(u^2_{(1)})\otimes  ...\otimes x_{(n)}(u^1_{(n)})....x_{(N-1)}(u_{(2)}^{n-1})x_{(N)}(u^n)\\
&=\xi_n(\mathcal U')(x_{(1)}(u^1)\otimes ...\otimes x_{(n)}(u^n))\\
\end{array}
\end{equation*} This proves the result.
\end{proof}

\section{Shuffle products and the enrichment of the category of commutative Hopf algebroids}

We recall from Section 2 the category $HALG_k$ of Hopf algebroids over $k$, enriched over the symmetric monoidal category $CoCoalg_k$ of cocommutative $k$-coalgebras. If $cHALG_k$ denotes the full subcategory of $HALG_k$ consisting of commutative Hopf algebroids, then  $cHALG_k$ is also enriched over $CoCoalg_k$.  In this section, we will obtain a second enrichment of commutative Hopf algebroids in cocommutative coalgebras, by using the shuffle product in Hochschild homology.

\smallskip
Let  $\mathcal U=(U,A_L,s_L,t_L,\Delta_L,\epsilon_L,S)$ be a commutative Hopf algebroid. Then,   $U$ and $A_L=A=A_R$ are commutative rings. 
We know from \cite[$\S$ 4.2]{Loday} that the Hochschild homology of a commutative algebra is equipped with a shuffle product structure. For a commutative Hopf algebroid
$\mathcal U=(U,A_L,s_L,t_L,\Delta_L,\epsilon_L,S)$, we now recall from \cite[$\S$ 4.4.1]{Ko3} the $(p,q)$-shuffle product
\begin{equation}
sh_{pq}(\mathcal U):C_p(\mathcal U)\otimes C_q(\mathcal U)\longrightarrow C_{p+q}(\mathcal U)
\end{equation} which is given by the formula (for $p$, $q\geq 1$)
\begin{equation}
sh_{pq}(\mathcal U)((u^1\otimes ...\otimes u^p)\otimes (u^{p+1}\otimes ...\otimes u^{p+q})):=\underset{\sigma\in Sh(p,q)}{\sum} sgn(\sigma)(u^{\sigma^{-1}(1)}\otimes ... \otimes u^{\sigma^{-1}(p+q)})
\end{equation} Here $Sh(p,q)$ is the set of $(p,q)$-shuffles, i.e.,
\begin{equation}
Sh(p,q):=\{\mbox{$\sigma\in S_{p+q}$ $\vert$ $\sigma(1)<...<\sigma(p)$; $\sigma(p+1)<...<\sigma(p+q)$}\}
\end{equation} For $p=q=0$, the shuffle product is given by setting $sh_{00}(\mathcal U)$ to be the multiplication on $A$. Further, one has (see \cite[$\S$ 4.4.1]{Ko3})
\begin{equation}
\begin{array}{c}
sh_{p0}(\mathcal U):C_p(\mathcal U)\otimes C_0(\mathcal U)\longrightarrow C_p(\mathcal U)\qquad (u^1\otimes ...\otimes u^p)\otimes a \mapsto (t_L(a)u^1\otimes ... \otimes u^p)\\
sh_{0q}(\mathcal U):C_0(\mathcal U)\otimes C_q(\mathcal U)\longrightarrow C_q(\mathcal U)\qquad a\otimes  (u^1\otimes ...\otimes u^p)\mapsto (u^1\otimes...\otimes  u^qt_L(a))\\
\end{array}
\end{equation} for $p\geq 1$ and $q\geq 1$. 
There is now an induced product structure 
$
sh_{pq}(\mathcal U):HH_p(\mathcal U)\otimes HH_q(\mathcal U)\longrightarrow HH_{p+q}(\mathcal U)
$ which makes the   the Hochschild homology $HH_\bullet(\mathcal U):=\underset{n\geq 0}{\bigoplus}HH_n(\mathcal U)$  of a commutative Hopf algebroid $\mathcal U$
into a graded algebra (see \cite[$\S$ 4.4.1]{Ko3}) that we denote by $(HH_\bullet(\mathcal U),sh(\mathcal U))$. 

\begin{thm}\label{P4.1n}
Let $\mathcal U$, $\mathcal U'$ be commutative Hopf algebroids. Let $C$ be a cocommutative coalgebra and let $(\Psi,\psi):C\longrightarrow V(\mathcal U,\mathcal U')$ be a measuring of Hopf algebroids. Then, the induced $K$-linear map
\begin{equation}
\underline{\Psi}^{hoc}:C\longrightarrow Vect_k(HH_\bullet(\mathcal U),HH_\bullet(\mathcal U'))\qquad x\mapsto (\underline{\Psi}^{hoc}_\bullet(x):HH_\bullet(\mathcal U)\longrightarrow HH_\bullet(\mathcal U'))
\end{equation} gives a measuring of algebras from $(HH_\bullet(\mathcal U),sh(\mathcal U))$ to $(HH_\bullet(\mathcal U'),sh(\mathcal U'))$.
\end{thm}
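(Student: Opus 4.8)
The plan is to check directly that $\underline{\Psi}^{hoc}$ satisfies the two defining conditions of a coalgebra measuring in \eqref{2.5ft}, now with respect to the shuffle products: compatibility with units, and multiplicativity $\underline{\Psi}^{hoc}(x)(\alpha\cdot\beta)=\sum \underline{\Psi}^{hoc}(x_{(1)})(\alpha)\cdot \underline{\Psi}^{hoc}(x_{(2)})(\beta)$, where $\cdot$ denotes the shuffle product on homology. Since each shuffle map $sh_{pq}(\mathcal U)$ is a chain map and, by Proposition \ref{P3.3c}, each $\underline{\Psi}_\bullet(x)$ is a morphism of cyclic modules (in particular a chain map for the Hochschild differential), it suffices to prove the corresponding identities at the level of chains; they then descend to $HH_\bullet(\mathcal U)$ and $HH_\bullet(\mathcal U')$.

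The unit condition is immediate. The unit of $(HH_\bullet(\mathcal U),sh(\mathcal U))$ is the class of $1_A\in A=C_0(\mathcal U)$, and on $C_0(\mathcal U)$ the map $\underline{\Psi}_0(x)$ is just the base measuring $\psi(x):A\longrightarrow A'$. As $\psi$ is already a measuring of the (commutative) base algebras, $\psi(x)(1_A)=\epsilon_C(x)1_{A'}$, which is exactly the required identity $\underline{\Psi}^{hoc}(x)(1)=\epsilon_C(x)\cdot 1$.

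For multiplicativity I would fix $\alpha=u^1\otimes\cdots\otimes u^p\in C_p(\mathcal U)$ and $\beta=u^{p+1}\otimes\cdots\otimes u^{p+q}\in C_q(\mathcal U)$ and compare, at the chain level,
$$\underline{\Psi}_{p+q}(x)\big(sh_{pq}(\mathcal U)(\alpha\otimes\beta)\big)\qquad\text{with}\qquad \sum sh_{pq}(\mathcal U')\big(\underline{\Psi}_p(x_{(1)})(\alpha)\otimes \underline{\Psi}_q(x_{(2)})(\beta)\big).$$
On the left, the shuffle is performed first, so in the $\sigma$-summand the $j$-th tensor slot carries $x_{(j)}(u^{\sigma^{-1}(j)})$. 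On the right, I would use coassociativity to merge the two split iterated coproducts of $x_{(1)}$ and $x_{(2)}$ into the single $(p+q)$-fold coproduct $\sum x_{(1)}\otimes\cdots\otimes x_{(p+q)}$; writing $w^i:=x_{(i)}(u^i)$ and then shuffling, the $j$-th slot of the $\sigma$-summand carries $w^{\sigma^{-1}(j)}=x_{(\sigma^{-1}(j))}(u^{\sigma^{-1}(j)})$. Thus the two expressions agree in the $u$-arguments and the signs $sgn(\sigma)$, and differ only in which Sweedler leg of $x$ is paired with each $u^{\sigma^{-1}(j)}$.

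The only place the hypotheses enter nontrivially is the reconciliation of these legs, and this is precisely where cocommutativity of $C$ is used: the iterated coproduct $\sum x_{(1)}\otimes\cdots\otimes x_{(p+q)}$ is invariant under permuting its tensor factors, so for each fixed $\sigma$ one may relabel the legs by $\sigma$, turning $x_{(j)}$ into $x_{(\sigma^{-1}(j))}$ summand by summand. This identifies the two chain-level expressions term by term and yields multiplicativity on homology. I would finish by treating the degenerate shuffles $sh_{p0}$, $sh_{0q}$ and $sh_{00}$ separately: there the product is implemented by multiplication with $t_L(a)$ (resp. by the product on $A$), and the identity reduces to the compatibility of $\Psi$ with $t_L$ from \eqref{2.7} together with $\psi$ being a measuring of base algebras, both already invoked above. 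The main obstacle is just the bookkeeping in this cocommutativity reindexing; the rest is formal.
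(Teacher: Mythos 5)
Your proposal is correct and takes essentially the same approach as the paper's proof: the unit condition via the base measuring $\psi$ on $A=C_0(\mathcal U)$, the chain-level comparison of the two sides of the multiplicativity identity using cocommutativity of $C$ to replace $x_{(j)}$ by $x_{(\sigma^{-1}(j))}$ summand by summand for each shuffle $\sigma$, and the degenerate cases $sh_{p0}$, $sh_{0q}$, $sh_{00}$ handled through the compatibility of $\Psi$ with $t_L$ from \eqref{2.7}. The only cosmetic difference is that you make explicit the descent from chains to homology via Proposition \ref{P3.3c}, which the paper leaves implicit.
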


\begin{proof} The unit in $(HH_\bullet(\mathcal U),sh(\mathcal U))$ is given by the class of the unit $1_A\in A=C_0(\mathcal U)$. Since $\psi:C\longrightarrow Hom_K(A,A')$ gives in particular a measuring from $A$ to $A'$, we have $\underline{\Psi}^{hoc}_\bullet(x)(1_A)=\epsilon_C(x)1_{A'}$, where $\epsilon_C$ is the counit on $C$. We now note that for any $x\in  C$ and $p$, $q\geq 1$, we have
\begin{equation}
\begin{array}{l}
\underline{\Psi}_{p+q}(x)(sh_{pq}(\mathcal U)((u^1\otimes ...\otimes u^p)\otimes (u^{p+1}\otimes ...\otimes u^{p+q})))\\
=\underline{\Psi}_{p+q}(x)\left(\underset{\sigma\in Sh(p,q)}{\sum} sgn(\sigma)(u^{\sigma^{-1}(1)}\otimes ... \otimes u^{\sigma^{-1}(p+q)})\right)\\
=\underset{\sigma\in Sh(p,q)}{\sum} sgn(\sigma)(x_{(1)}(u^{\sigma^{-1}(1)})\otimes ... \otimes x_{(p+q)}(u^{\sigma^{-1}(p+q)}))\\
=\underset{\sigma\in Sh(p,q)}{\sum} sgn(\sigma)(x_{\sigma^{-1}(1)}(u^{\sigma^{-1}(1)})\otimes ... \otimes x_{\sigma^{-1}(p+q)}(u^{\sigma^{-1}(p+q)}))\qquad\qquad \mbox{(because
 $C$ is cocommutative)}\\
 =sh_{pq}(\mathcal U)((x_{(1)}(u^1)\otimes ...\otimes x_{(p)}(u^p))\otimes (x_{(p+1)}(u^{p+1})\otimes ...\otimes x_{(p+q)}(u^{p+q})))\\
\end{array}
\end{equation} 
For $p\geq 1$, we have
\begin{equation}
\begin{array}{ll}
\underline{\Psi}_{p}(x)(sh_{p0}(\mathcal U)((u^1\otimes ...\otimes u^p)\otimes a)&=
\underline{\Psi}_{p}(x)( (t_L(a)u^1\otimes ... \otimes u^p))\\
&= (x_{(1)}(t_L(a)u^1)\otimes ... \otimes x_{(p)}(u^p))\\
&=(x_{(1)}(t_L(a))x_{(2)}(u^1)\otimes ... \otimes x_{(p+1)}(u^p))\\
&=(t_L'(x_{(p+1)}(a)))x_{(1)}(u^1)\otimes ... \otimes x_{(p)}(u^p))\\
&=sh_{p0}(\mathcal U)((x_{(1)}(u^1)\otimes ...\otimes x_{(p)}(u^p))\otimes x_{(p+1)}(a))\\
\end{array}
\end{equation}
We can similarly verify the case for $sh_{0q}$ with $q\geq 1$ and for $sh_{00}$. 
This proves the result.

\end{proof}

Our next objective is to use Proposition \ref{P4.1n} to obtain an enrichment of commutative Hopf algebroids over the category of cocommutative coalgebras. For that we recall the following fact: if $R$, $R'$ are $k$-algebras, the category of coalgebra measurings from $R$ to $R'$ contains a final object $\varphi(R,R'):\mathcal M(R,R')\longrightarrow
Vect_k(R,R')$ (see Sweedler \cite{Sweed}). Then, 
$\mathcal M(R,R')$ is known as the universal measuring coalgebra. We let $\mathcal M_c(R,R')$ be the cocommutative part of the coalgebra $\mathcal M(R,R')$.  Then,  
the restriction $\varphi_c(R,R'):\mathcal M_c(R,R')\hookrightarrow \mathcal M(R,R')\longrightarrow Vect_k(R,R')$ becomes the final
object in the category of cocommutative coalgebra measurings from $R$ to $R'$ (see \cite[Proposition 1.4]{GM1}, \cite{GM2}).  Further, the objects $\mathcal M_c(R,R')$ give an enrichment of $k$-algebras over cocommutative $k$-coalgebras.

\smallskip
We now define the enriched category $\widetilde{cHALG}_k$ whose objects are commutative Hopf algebroids over $k$ and whose hom-objects are defined by setting
\begin{equation}\label{enr4}
\widetilde{cHALG}_k(\mathcal U,\mathcal U'):=\mathcal M_c((HH_\bullet(\mathcal U),sh(\mathcal U)),(HH_\bullet(\mathcal U'),sh(\mathcal U')))\in CoCoalg_k
\end{equation} for commutative Hopf algebroids $\mathcal U$, $\mathcal U'$. Since each $(HH_\bullet(\mathcal U),sh(\mathcal U))$ is an algebra, we also have a canonical morphism
$k\longrightarrow \mathcal M_c((HH_\bullet(\mathcal U),sh(\mathcal U)),(HH_\bullet(\mathcal U),sh(\mathcal U)))$ of cocommutative coalgebras. 

\begin{lem}\label{L4.2}  Let $\mathcal U$, $\mathcal U'$ be commutative Hopf algebroids. Then, there is a canonical morphism of cocommutative coalgebras
\begin{equation}\label{4.7yg}
\tau(\mathcal U,\mathcal U'):\mathcal M_c(\mathcal U,\mathcal U')\longrightarrow \mathcal M_c((HH_\bullet(\mathcal U),sh(\mathcal U)),(HH_\bullet(\mathcal U'),sh(\mathcal U')))
\end{equation}

\end{lem}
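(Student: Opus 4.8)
The plan is to obtain $\tau(\mathcal U,\mathcal U')$ by feeding the \emph{universal} measuring of Hopf algebroids into Proposition \ref{P4.1n} and then invoking the universal property of Sweedler's cocommutative measuring coalgebra between the two shuffle algebras. Recall from Proposition \ref{P2.5} that $\mathcal M_c(\mathcal U,\mathcal U')$ carries a universal measuring $(\Phi,\phi):\mathcal M_c(\mathcal U,\mathcal U')\longrightarrow V(\mathcal U,\mathcal U')$, and that $\mathcal M_c(\mathcal U,\mathcal U')$ is itself a cocommutative coalgebra.

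First I would apply Proposition \ref{P4.1n} to this particular measuring $(\Phi,\phi)$. Since $\mathcal U$ and $\mathcal U'$ are commutative, Proposition \ref{P4.1n} yields an induced $k$-linear map
\begin{equation*}
\underline{\Phi}^{hoc}:\mathcal M_c(\mathcal U,\mathcal U')\longrightarrow Vect_k(HH_\bullet(\mathcal U),HH_\bullet(\mathcal U'))
\end{equation*}
that is a measuring of algebras from $(HH_\bullet(\mathcal U),sh(\mathcal U))$ to $(HH_\bullet(\mathcal U'),sh(\mathcal U'))$. Because $\mathcal M_c(\mathcal U,\mathcal U')$ is cocommutative, $\underline{\Phi}^{hoc}$ is in fact a cocommutative coalgebra measuring of these two shuffle algebras.

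Next I would use that $\varphi_c\big((HH_\bullet(\mathcal U),sh(\mathcal U)),(HH_\bullet(\mathcal U'),sh(\mathcal U'))\big)$ is the final object in the category of cocommutative coalgebra measurings from $(HH_\bullet(\mathcal U),sh(\mathcal U))$ to $(HH_\bullet(\mathcal U'),sh(\mathcal U'))$, as recalled just before the statement. The universal property then produces a unique morphism of cocommutative coalgebras
\begin{equation*}
\tau(\mathcal U,\mathcal U'):\mathcal M_c(\mathcal U,\mathcal U')\longrightarrow \mathcal M_c\big((HH_\bullet(\mathcal U),sh(\mathcal U)),(HH_\bullet(\mathcal U'),sh(\mathcal U'))\big)
\end{equation*}
through which $\underline{\Phi}^{hoc}$ factors. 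The uniqueness of this factorization is precisely what makes $\tau(\mathcal U,\mathcal U')$ canonical.

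I do not expect a genuine obstacle here: the lemma is a formal consequence of the universal property of the Sweedler hom applied to the universal Hopf-algebroid measuring. The only point requiring the earlier machinery is that $\underline{\Phi}^{hoc}$ really lands in $Vect_k(HH_\bullet(\mathcal U),HH_\bullet(\mathcal U'))$, i.e.\ that each $\underline{\Phi}^{hoc}_\bullet(x)$ descends to a well-defined map on Hochschild homology; this is supplied by Proposition \ref{P3.3c}, while the compatibility with the shuffle products is exactly the content of Proposition \ref{P4.1n}. Hence the substantive work has already been carried out, and $\tau(\mathcal U,\mathcal U')$ is extracted purely from universality.
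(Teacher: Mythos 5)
Your proposal is correct and follows essentially the same route as the paper: apply Proposition \ref{P4.1n} to the universal measuring $(\Phi,\phi):\mathcal M_c(\mathcal U,\mathcal U')\longrightarrow V(\mathcal U,\mathcal U')$ from Proposition \ref{P2.5}, then invoke the universal property (finality) of the cocommutative measuring coalgebra $\mathcal M_c\big((HH_\bullet(\mathcal U),sh(\mathcal U)),(HH_\bullet(\mathcal U'),sh(\mathcal U'))\big)$ to obtain $\tau(\mathcal U,\mathcal U')$. Your added remarks on uniqueness of the factorization and on where the substantive work (Propositions \ref{P3.3c} and \ref{P4.1n}) resides are accurate and consistent with the paper's argument.
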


\begin{proof}
In the notation of Proposition \ref{P2.5}, $(\Phi,\phi):\mathcal M_c(\mathcal U,\mathcal U')\longrightarrow V(\mathcal U,\mathcal U')$ is a cocommutative measuring from $\mathcal U$ to $\mathcal U'$. By Proposition
\ref{P4.1n}, this induces a measuring  of algebras from $(HH_\bullet(\mathcal U),sh(\mathcal U))$ to $(HH_\bullet(\mathcal U'),sh(\mathcal U'))$. By the  property of the universal
cocommutative measuring coalgebra $\mathcal M_c((HH_\bullet(\mathcal U),sh(\mathcal U)),(HH_\bullet(\mathcal U'),sh(\mathcal U')))$, we now obtain an induced morphism
$\tau(\mathcal U,\mathcal U')$ as in \eqref{4.7yg}.
\end{proof}

\begin{Thm}\label{T4.3} There is a $CoCoalg_k$-enriched functor 
$
\tau: cHALG_k\longrightarrow \widetilde{cHALG}_k
$ which is identity on objects and whose mapping on hom-objects is given by
\begin{equation}
\tau(\mathcal U,\mathcal U'):cHALG_k(\mathcal U,\mathcal U')=\mathcal M_c(\mathcal U,\mathcal U')\longrightarrow \mathcal M_c((HH_\bullet(\mathcal U),sh(\mathcal U)),(HH_\bullet(\mathcal U'),sh(\mathcal U')))= \widetilde{cHALG}_k(\mathcal U,\mathcal U')
\end{equation} for commutative Hopf algebroids $\mathcal U$, $\mathcal U'$ over $k$. 
\end{Thm}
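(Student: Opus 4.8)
The plan is to verify that the assignment $\tau$ satisfies the three defining conditions of a $CoCoalg_k$-enriched functor: it must act on objects (which is prescribed to be the identity), it must send each hom-object $cHALG_k(\mathcal U,\mathcal U')=\mathcal M_c(\mathcal U,\mathcal U')$ to the corresponding hom-object $\widetilde{cHALG}_k(\mathcal U,\mathcal U')$ via a morphism in $CoCoalg_k$, and these morphisms must be compatible with composition and with the units. The mapping on hom-objects is already in hand: it is exactly the canonical coalgebra morphism $\tau(\mathcal U,\mathcal U')$ produced in Lemma \ref{L4.2}. So the content of the theorem is precisely the functoriality axioms, and I would organize the proof around those two diagrams (composition and unit).

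For the composition axiom, I would fix commutative Hopf algebroids $\mathcal U$, $\mathcal U'$, $\mathcal U''$ and show that the square
\begin{equation*}
\begin{CD}
\mathcal M_c(\mathcal U,\mathcal U')\otimes \mathcal M_c(\mathcal U',\mathcal U'') @>>> \mathcal M_c(\mathcal U,\mathcal U'')\\
@V\tau(\mathcal U,\mathcal U')\otimes\tau(\mathcal U',\mathcal U'')VV @VV\tau(\mathcal U,\mathcal U'')V\\
\widetilde{cHALG}_k(\mathcal U,\mathcal U')\otimes \widetilde{cHALG}_k(\mathcal U',\mathcal U'') @>>> \widetilde{cHALG}_k(\mathcal U,\mathcal U'')\\
\end{CD}
\end{equation*}
commutes, where the horizontal arrows are the composition morphisms of the two enriched categories (the top one from Theorem \ref{T2.7}, the bottom one from the Sweedler-hom enrichment of $k$-algebras over $CoCoalg_k$). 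The decisive observation is that both the universal measuring coalgebras $\mathcal M_c(-,-)$ and the Sweedler homs $\mathcal M_c((HH_\bullet(-),sh),(HH_\bullet(-),sh))$ satisfy universal properties, so the target $\mathcal M_c((HH_\bullet(\mathcal U),sh(\mathcal U)),(HH_\bullet(\mathcal U''),sh(\mathcal U'')))$ is a \emph{final} object among cocommutative measurings. Hence it suffices to check that the two composite coalgebra morphisms induce the \emph{same} measuring of algebras into $Vect_k(HH_\bullet(\mathcal U),HH_\bullet(\mathcal U''))$; uniqueness in the universal property then forces the two paths to coincide. Concretely, I would trace an element $x\otimes x'$, apply Proposition \ref{P4.1n} to each factor to obtain the induced measurings $\underline\Psi^{hoc}$, and use the fact (established in the proof of Proposition \ref{P2.6}) that the composite measuring on $U$ acts as $x'\circ x$; since the shuffle-product-induced maps $\underline{\Psi}_\bullet^{hoc}(x)$ are built functorially from the underlying chain-level maps $\underline\Psi_\bullet(x)$ of Proposition \ref{P3.3c}, composition at the Hochschild-homology level agrees on the nose. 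For the unit axiom, I would similarly observe that the unit map $k\longrightarrow\mathcal M_c(\mathcal U,\mathcal U)$ from Theorem \ref{T2.7} corresponds to the identity measuring $id_U,id_{A_L}$, which Proposition \ref{P4.1n} sends to the identity measuring on $(HH_\bullet(\mathcal U),sh(\mathcal U))$, matching the unit $k\longrightarrow\mathcal M_c((HH_\bullet(\mathcal U),sh(\mathcal U)),(HH_\bullet(\mathcal U),sh(\mathcal U)))$ of $\widetilde{cHALG}_k$; again finality of the universal coalgebra closes the argument.

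The main obstacle I anticipate is not any single computation but the bookkeeping needed to legitimately invoke the universal properties on both sides simultaneously. Specifically, I must be careful that the composition morphism in $\widetilde{cHALG}_k$ is itself induced by the universal property of the Sweedler hom applied to the composite of two \emph{algebra} measurings, and that the functoriality of the construction $\mathcal U\mapsto(HH_\bullet(\mathcal U),sh(\mathcal U))$ with respect to measurings (i.e.\ that $\underline\Psi^{hoc}$ respects composition) is genuinely established by Proposition \ref{P4.1n} together with Proposition \ref{P3.3c}. Once it is clear that both composites of coalgebra morphisms in the square above classify one and the same measuring of the Hochschild homology algebras, the equality of the two coalgebra maps is immediate from uniqueness, and so I expect the technical heart of the proof to be a clean statement that the passage from a measuring $(\Psi,\psi)$ to the induced algebra measuring $\underline\Psi^{hoc}$ is compatible with the two composition laws. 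I would close by noting that since all maps involved are morphisms of cocommutative coalgebras and the two enrichments are over the same symmetric monoidal category $CoCoalg_k$, these compatibilities are exactly the axioms for $\tau$ to be a $CoCoalg_k$-enriched functor, completing the proof.
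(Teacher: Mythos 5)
Your proposal is correct and follows essentially the same route as the paper: the paper likewise reduces the composition axiom to the equality of the two composite measurings into $Vect_k(HH_\bullet(\mathcal U),HH_\bullet(\mathcal U''))$ via the universal (final) property of the Sweedler hom, verifies this by an element-level Sweedler computation using that the composition map $\mathcal M_c(\mathcal U,\mathcal U')\otimes\mathcal M_c(\mathcal U',\mathcal U'')\longrightarrow\mathcal M_c(\mathcal U,\mathcal U'')$ is a coalgebra morphism (so $(y\circ x)_{(1)}\otimes\cdots\otimes(y\circ x)_{(p)}=(y_{(1)}\circ x_{(1)})\otimes\cdots\otimes(y_{(p)}\circ x_{(p)})$), and handles the unit axiom by the same finality argument applied to the iterated coproduct of $1\in k$. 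The only cosmetic difference is that the paper carries out the chain-level computation directly from the formula in \eqref{35e} rather than citing Proposition \ref{P3.3c}, which is the same content.
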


\begin{proof}
Let $\mathcal U$, $\mathcal U'$, $\mathcal U''$ be commutative Hopf algebroids. We  will show that the following diagram commutes
\begin{equation}\label{cd4.1j}
\begin{CD}
\mathcal  M_c(\mathcal U,\mathcal U')\otimes \mathcal  M_c(\mathcal U',\mathcal U'')@>\circ>>\mathcal  M_c(\mathcal U,\mathcal U'')\\
@V\tau(\mathcal U,\mathcal U')\otimes \tau(\mathcal U',\mathcal U'') VV @VV\tau(\mathcal U,\mathcal U'')V\\
\mathcal M_c(HH_\bullet(\mathcal U),HH_\bullet(\mathcal U'))\otimes \mathcal M_c(HH_\bullet(\mathcal U'),HH_\bullet(\mathcal U'')) @>\circ>> \mathcal M_c(HH_\bullet(\mathcal U),HH_\bullet(\mathcal U''))\\
\end{CD}
\end{equation} The top horizontal composition $\circ: \mathcal  M_c(\mathcal U,\mathcal U')\otimes \mathcal  M_c(\mathcal U',\mathcal U'')\longrightarrow \mathcal  M_c(\mathcal U,\mathcal U'')$ in \eqref{cd4.1j} is obtained from Theorem \ref{T2.7}, while the bottom horizontal composition $\circ: \mathcal M_c(HH_\bullet(\mathcal U),HH_\bullet(\mathcal U'))\otimes \mathcal M_c(HH_\bullet(\mathcal U'),HH_\bullet(\mathcal U'')) \longrightarrow \mathcal M_c(HH_\bullet(\mathcal U),HH_\bullet(\mathcal U''))$ is obtained from the enrichment of algebras in cocommutative coalgebras. 

\smallskip
From Lemma \ref{L4.2} and Theorem \ref{T2.7}, we note that all the maps in \eqref{cd4.1j} are morphisms of cocommutative coalgebras. It follows from the   property of the universal cocommutative measuring coalgebra $\mathcal M_c(HH_\bullet(\mathcal U),HH_\bullet(\mathcal U''))$ that in order to show that \eqref{cd4.1j} commutes, it suffices to verify that the following two compositions are equal
\begin{equation}\label{cd4.12j}
\begin{array}{ccc}
\begin{CD}
\mathcal  M_c(\mathcal U,\mathcal U')\otimes \mathcal  M_c(\mathcal U',\mathcal U'')\\
@V\tau(\mathcal U,\mathcal U')\otimes \tau(\mathcal U',\mathcal U'') VV \\
\mathcal M_c(HH_\bullet(\mathcal U),HH_\bullet(\mathcal U'))\otimes \mathcal M_c(HH_\bullet(\mathcal U'),HH_\bullet(\mathcal U''))\\
@V\circ VV\\
\mathcal M_c(HH_\bullet(\mathcal U),HH_\bullet(\mathcal U''))\\
@VV\varphi_c(HH_\bullet(\mathcal U),HH_\bullet(\mathcal U''))V\\
Vect_k(HH_\bullet(\mathcal U),HH_\bullet(\mathcal U''))\\
\end{CD} & \qquad & 
\begin{CD}
\mathcal  M_c(\mathcal U,\mathcal U')\otimes \mathcal  M_c(\mathcal U',\mathcal U'')\\
@VV\circ V\\
\mathcal  M_c(\mathcal U,\mathcal U'')\\
@VV\tau(\mathcal U,\mathcal U'')V\\
\mathcal M_c(HH_\bullet(\mathcal U),HH_\bullet(\mathcal U''))\\
@VV\varphi_c(HH_\bullet(\mathcal U),HH_\bullet(\mathcal U''))V\\
Vect_k(HH_\bullet(\mathcal U),HH_\bullet(\mathcal U''))\\
\end{CD}\\
\end{array}
\end{equation} For the sake of convenience, we denote the left vertical composition in \eqref{cd4.12j} by $\psi_1$ and the right vertical composition by $\psi_2$.  We now consider 
$x\in \mathcal  M_c(\mathcal U,\mathcal U')$, $y\in \mathcal  M_c(\mathcal U',\mathcal U'')$ and $(u^1\otimes ...\otimes u^p)\in C_p(\mathcal U)$. We see that
\begin{equation}\label{413ar}
\begin{array}{ll}
\psi_2(x\otimes y)(u^1\otimes ...\otimes u^p)&=(y\circ x)(u^1\otimes ...\otimes u^p)\\
&=(y\circ x)_{(1)}(u^1)\otimes ...\otimes (y\circ x)_{(p)}(u^p)\\
\end{array}
\end{equation} Since $\circ: \mathcal  M_c(\mathcal U,\mathcal U')\otimes \mathcal  M_c(\mathcal U',\mathcal U'')\longrightarrow \mathcal  M_c(\mathcal U,\mathcal U'')$ is a morphism of coalgebras, we note that $(y\circ x)_{(1)}\otimes ...\otimes (y\circ x)_{(p)}=(y_{(1)}\circ x_{(1)})\otimes ...\otimes (y_{(p)}\circ x_{(p)})$. Combining with \eqref{413ar}, we see that the right vertical composition in \eqref{cd4.12j} may be described explicitly as
\begin{equation}\label{414cg}
\psi_2(x\otimes y)(u^1\otimes ...\otimes u^p)=(y_{(1)}\circ x_{(1)})(u^1)\otimes ...\otimes (y_{(p)}\circ x_{(p)})(u^p)=y_{(1)}(x_{(1)}(u^1))\otimes ...\otimes y_{(p)}(x_{(p)}(u^p))
\end{equation} On the other hand, we note that the following diagram is commutative
\begin{equation}\label{415qs}
\begin{CD}
\mathcal  M_c(\mathcal U,\mathcal U')\otimes \mathcal  M_c(\mathcal U',\mathcal U'')@>\circ(\tau(\mathcal U,\mathcal U')\otimes \tau(\mathcal U',\mathcal U''))>>\mathcal M_c(HH_\bullet(\mathcal U),HH_\bullet(\mathcal U'')) \\
@V(\varphi_c(HH_\bullet(\mathcal U),HH_\bullet(\mathcal U'))\circ \tau(\mathcal U,\mathcal U'))\otimes V(\varphi_c(HH_\bullet(\mathcal U'),HH_\bullet(\mathcal U''))\circ \tau(\mathcal U',\mathcal U''))V @V\varphi_c(HH_\bullet(\mathcal U),HH_\bullet(\mathcal U''))VV\\
Vect_k(HH_\bullet(\mathcal U),HH_\bullet(\mathcal U'))\otimes Vect_k(HH_\bullet(\mathcal U'),HH_\bullet(\mathcal U''))@>\circ >> Vect_k(HH_\bullet(\mathcal U),HH_\bullet(\mathcal U''))\\
\end{CD}
\end{equation} From \eqref{415qs}, it follows that the left vertical composition in \eqref{cd4.12j} may be described explicitly as
\begin{equation}\label{416tv}
\begin{array}{ll}
\psi_1(x\otimes y)(u^1\otimes ...\otimes u^p)&=y(x(u^1\otimes ...\otimes u^p))\\
&=y_{(1)}(x_{(1)}(u^1))\otimes ...\otimes y_{(p)}(x_{(p)}(u^p))\\
\end{array}
\end{equation} From \eqref{414cg} and \eqref{416tv}, we see that $\psi_1=\psi_2$ and hence the diagram \eqref{cd4.1j} commutes. Similarly by considering the coalgebra $k$ and using the fact that its $p$-th iterated coproduct $\Delta^p(1)=1\otimes ...\otimes 1 \textrm{}\mbox{($p$-times)}
$, we see that the following compositions are equal
\begin{equation} \label{417ge}
\begin{array}{c}k\longrightarrow \mathcal M_c(HH_\bullet(\mathcal U),HH_\bullet(\mathcal U))\xrightarrow{\varphi_c(HH_\bullet(\mathcal U),HH_\bullet(\mathcal U))} Vect_k(HH_\bullet(\mathcal U),HH_\bullet(\mathcal U))
\\ k\longrightarrow \mathcal M_c(\mathcal U,\mathcal U)\xrightarrow{\tau(\mathcal U,\mathcal U)} \mathcal M_c(HH_\bullet(\mathcal U),HH_\bullet(\mathcal U))\xrightarrow{\varphi_c(HH_\bullet(\mathcal U),HH_\bullet(\mathcal U))} Vect_k(HH_\bullet(\mathcal U),HH_\bullet(\mathcal U))
\end{array}
\end{equation} It follows from \eqref{417ge} that the following diagram commutes
\begin{equation}
\xymatrix{k \ar[rr] \ar[rd]&  &\mathcal M_c(HH_\bullet(\mathcal U),HH_\bullet(\mathcal U))\\
& \mathcal M_c(\mathcal U,\mathcal U) \ar[ur]_{\tau(\mathcal U,\mathcal U)} & \\
}
\end{equation}
This proves the result.
\end{proof}
\section{Comodule measurings for SAYD modules}

Let  $\mathcal U=(U,A_L,s_L,t_L,\Delta_L,\epsilon_L,S)$ be a Hopf algebroid. From now onwards, we set $A^e:=A\otimes_kA^{op}$ and define
\begin{equation}\label{5v}
\eta_L: A^e=A\otimes_kA^{op}\xrightarrow{s_L\otimes t_L}U\otimes U \longrightarrow U
\end{equation} where the second arrow in \eqref{5v} is the multiplication on $U$.  Following \cite[$\S$ 2]{KoKr}, we note that there are now four commuting actions of
$A$ on $U$ which are denoted as follows
\begin{equation}\label{whiteblack}
a\triangleright u\triangleleft b:=s_L(a)t_L(b)u\qquad a\blacktriangleright u\blacktriangleleft b:=us_L(b)t_L(a) \qquad a,b\in A,\textrm{ }u\in U
\end{equation} By Definition \ref{D2.1}, we then have an $A$-coring
\begin{equation}
\Delta_L:U\longrightarrow U_\triangleleft\otimes_A {_\triangleright}U \qquad \epsilon_L: U\longrightarrow A
\end{equation} The left action $\blacktriangleright$ of $A$ on $U$ may be treated as a right action of $A^{op}$ on $U$. Similarly, the right action $\triangleleft$ of
$A$ on $U$ may be treated as a left action by $A^{op}$. Accordingly, we may consider the tensor product
\begin{equation}\label{54ze}
{_\blacktriangleright}U\otimes_{A^{op}}U_{\triangleleft}:=U\otimes_kU/\mbox{span$\{\mbox{$a\blacktriangleright u\otimes v-u\otimes v\triangleleft a$ $\vert$ $u$, $v\in U$, $a\in A$} \}$}
\end{equation} There is now a Hopf-Galois map (see \cite{BoK}, \cite{KoKr}, \cite{Sch2})
\begin{equation}\label{hg5.5}
\beta(\mathcal U):{_\blacktriangleright}U\otimes_{A^{op}}U_{\triangleleft}\longrightarrow U_\triangleleft \otimes_A {_\triangleright}U \qquad u\otimes_{A^{op}}v\mapsto u_{(1)}\otimes_Au_{(2)}v
\end{equation} Since $\mathcal U$ is a Hopf algebroid, it follows (see \cite[Proposition 4.2]{BoK}) that the morphism $\beta(\mathcal U)$ in \eqref{hg5.5} is a bijection. Accordingly, in the notation of \cite{KoKr}, \cite{Sch2}, we write 
\begin{equation}\label{sty5.6}
u_+\otimes_{A^{op}}u_-:=\beta(\mathcal U)^{-1}(u\otimes_A1) \qquad u\in U
\end{equation} In this section, we will consider comodule measurings between stable anti-Yetter Drinfeld modules over Hopf algebroids. For this, we first recall the notion of comodule measuring between ordinary modules. Let $R$, $R'$ be $k$-algebras and let $P$, $P'$ be right modules over $R$ and $R'$ respectively. Then, a comodule measuring from $P$ to $P'$ consists of a pair of maps (see \cite{Bat}, \cite{V1})
\begin{equation}
\psi: C\longrightarrow Vect_k(R,R')\qquad \omega: D\longrightarrow Vect_k(P,P')
\end{equation} where $C$ is a $k$-coalgebra, $D$ is a right $C$-comodule, $\psi:C\longrightarrow Vect_k(R,R')$  is a coalgebra measuring and
\begin{equation}
\omega(y)(pr)=y(pr)=y_{(0)}(p)y_{(1)}(r)=\omega(y_{(0)})(p)\psi(y_{(1)})(r)
\end{equation} for $y\in D$, $p\in P$ and $r\in R$.    For $\mathcal U=(U,A_L,s_L,t_L,\Delta_L,\epsilon_L,S)$, we will now recall the notions of $\mathcal U$-modules, $\mathcal U$-comodules and  stable anti-Yetter Drinfeld modules.

\begin{defn}\label{D5.1} (see \cite[$\S$ 2.4]{KoKr})
Let   $\mathcal U=(U,A_L,s_L,t_L,\Delta_L,\epsilon_L,S)$  be a Hopf algebroid. A right $\mathcal U$-module $P$ is  a right module over the $k$-algebra $U$.  Because of the ring
homomorphism $\eta_L:A^e
\longrightarrow U$, any right $\mathcal U$-module $P$ is also equipped with a  right $A^e$-module structure (or $(A,A)$-bimodule structure) given by
\begin{equation}\label{e5.09u}
b\blacktriangleright p\blacktriangleleft a=p(a\otimes b)=p\eta_L((a\otimes 1)(1\otimes b))=ps_L(a)t_L(b)
\end{equation} for $(a\otimes b)\in A^e=A\otimes_kA^{op}$ and $p\in P$. 
\end{defn}

\begin{defn}\label{D5.2} (see \cite{B1}, \cite{BrzWi}, \cite{KoKr}, \cite{Sch1})
Let   $\mathcal U=(U,A_L,s_L,t_L,\Delta_L,\epsilon_L,S)$  be a Hopf algebroid. A left $\mathcal U$-comodule $P$ is a left comodule over the $A$-coring $(U,\Delta_L:U\longrightarrow U\otimes_{A_L}U,\epsilon_L:U\longrightarrow A_L)$. In particular, a left $\mathcal U$-comodule $P$  is equipped with a left $A$-module structure $(a,p)\mapsto ap$ as well as a left $A$-module map
\begin{equation}
\Delta_P:P\longrightarrow U_\triangleleft\otimes_A P \qquad p\mapsto p_{(-1)}\otimes p_{(0)}
\end{equation}
\end{defn}

Following \cite[$\S$ 2.5]{KoKr}, we note that any left $\mathcal U$-comodule $P$ also carries a right $A$-module structure given by setting
\begin{equation}\label{r5}
pa:=\epsilon_L(p_{(-1)}s_L(a))p_{(0)} 
\end{equation} for $p\in P$, $a\in A$. This makes  any left $\mathcal U$-comodule $P$ into a right $A^e=A\otimes_kA^{op}$-module by setting 
\begin{equation}\label{e5.9u}
p(a\otimes b)=bpa=b\epsilon_L(p_{(-1)}s_L(a))p_{(0)} 
\end{equation} for $p\in P$ and  $(a\otimes b)\in A^e$. 

\begin{defn}\label{D5.3} (see \cite[Definition 2.7]{KoKr})  Let   $\mathcal U=(U,A_L,s_L,t_L,\Delta_L,\epsilon_L,S)$  be a Hopf algebroid.  An anti-Yetter Drinfeld module $P$ over
$\mathcal U$ is given by the following data

\smallskip
(1) A right $\mathcal U$-module structure on $P$ denoted by $(p,u)\mapsto pu$ for $p\in P$ and $u\in U$. 

\smallskip
(2) A left $\mathcal U$-comodule structure on $P$ which gives $\Delta_P:P\longrightarrow U_\triangleleft\otimes_A P$.

\smallskip
(3) The right $A^e$-module structure on $P$ induced by \eqref{e5.09u} coincides with the right $A^e$-module structure on $P$ as in \eqref{e5.9u}:
\begin{equation}\label{ditto}
ps_L(a)t_L(b)=b\blacktriangleright p\blacktriangleleft a=b\epsilon_L(p_{(-1)}s_L(a))p_{(0)} 
\end{equation}

\smallskip
(4) For $u\in U$ and $p\in P$, one has
\begin{equation}
\Delta_P(pu)=u_-p_{(-1)}u_{+(1)}\otimes_Ap_{(0)}u_{+(2)}
\end{equation}

\smallskip A morphism of anti-Yetter Drinfeld modules over
$\mathcal U$   is a morphism of vector spaces that is compatible with the right $\mathcal U$-module and left $\mathcal U$-comodule structures. The category of anti-Yetter Drinfeld modules will be denoted by ${^{\mathcal U}}aYD_{\mathcal U}$. 
Additionally, an anti-Yetter Drinfeld module $P$ is said to be stable, i.e., an SAYD module, if  for any $p\in P$, one has $p_{(0)}p_{(-1)}=p$. 
\end{defn}

\begin{lem}\label{L5.4b}
Let $R$, $R'$ be $k$-algebras and let $R^e=R\otimes_kR^{op}$, $R'^e=R'\otimes_kR'^{op}$ be their respective enveloping algebras. Let $C$ be a cocommutative $k$-coalgebra and let $\psi:C\longrightarrow Vect_k(R,R')$ be a measuring. Then, 
\begin{equation}\label{515t}
\psi^e:C\longrightarrow Vect_k(R^e,R'^e)\qquad \psi^e(x)(r_1\otimes r_2)=x(r_1\otimes r_2)=x_{(1)}(r_1)\otimes x_{(2)}(r_2)=\psi(x_{(1)})(r_1)\otimes \psi(x_{(2)})(r_2)
\end{equation} is a measuring of algebras.
\end{lem}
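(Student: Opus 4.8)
The plan is to verify directly the two conditions of a coalgebra measuring from \eqref{2.5ft} for the map $\psi^e$, keeping in mind that the multiplication on the enveloping algebra $R^e=R\otimes_kR^{op}$ is $(a\otimes b)(a'\otimes b')=aa'\otimes b'b$ and that its unit is $1_R\otimes 1_R$. Note first that $\psi^e(x)$ is well-defined $k$-linearly, since $R^e$ is just $R\otimes_kR$ as a vector space and $\psi^e(x)$ acts componentwise through $x_{(1)}\otimes x_{(2)}$; the only real content is multiplicativity and unitality.

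For the unit condition I would compute $\psi^e(x)(1_R\otimes 1_R)=x_{(1)}(1_R)\otimes x_{(2)}(1_R)$ and use that $\psi$ is a measuring to replace each $x_{(i)}(1_R)$ by $\epsilon_C(x_{(i)})1_{R'}$; the counit identity $\epsilon_C(x_{(1)})\epsilon_C(x_{(2)})=\epsilon_C(x)$ then gives $\epsilon_C(x)(1_{R'}\otimes 1_{R'})=\epsilon_C(x)1_{R'^e}$, as required. This step is routine.

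For multiplicativity, take $\alpha=r_1\otimes r_2$ and $\beta=r_1'\otimes r_2'$ in $R^e$, so that $\alpha\beta=r_1r_1'\otimes r_2'r_2$. Expanding $\psi^e(x)(\alpha\beta)=x_{(1)}(r_1r_1')\otimes x_{(2)}(r_2'r_2)$, applying the measuring property of $\psi$ in each tensor slot, and using coassociativity, rewrites the left-hand side as $x_{(1)}(r_1)x_{(2)}(r_1')\otimes x_{(3)}(r_2')x_{(4)}(r_2)$ in terms of the fourfold coproduct $x_{(1)}\otimes x_{(2)}\otimes x_{(3)}\otimes x_{(4)}$. Expanding the right-hand side $\psi^e(x_{(1)})(\alpha)\psi^e(x_{(2)})(\beta)$ in the same way, and using that the product in $R'^e$ reverses the order of the second factors, instead gives $x_{(1)}(r_1)x_{(3)}(r_1')\otimes x_{(4)}(r_2')x_{(2)}(r_2)$.

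Comparing the two expressions, the four Sweedler legs are attached to the arguments $(r_1,r_1',r_2',r_2)$ via the index assignments $(1,2,3,4)$ on the left and $(1,3,4,2)$ on the right, and these differ precisely by the cyclic permutation $(2\,3\,4)$. The main, and essentially only substantive, point is to absorb this permutation: since $C$ is cocommutative, its iterated coproduct is invariant under the action of the symmetric group permuting the tensor factors, so relabelling the legs by $(2\,3\,4)$ leaves the sum unchanged and the two sides coincide. I expect this reconciliation to be the crux of the argument; the reversal is forced by the $R^{op}$-factor of the enveloping algebra, and it is exactly here that cocommutativity of $C$ is indispensable, since without it $\psi^e$ would in general fail to be multiplicative.
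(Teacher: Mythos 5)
Your proof is correct and takes essentially the same approach as the paper's: both expand each side of the multiplicativity condition through the fourfold Sweedler coproduct and invoke cocommutativity of $C$ to relabel the legs, your explicit cyclic permutation $(2\,3\,4)$ being exactly the one-line index swap $x_{(1)}\otimes x_{(2)}\otimes x_{(3)}\otimes x_{(4)} \mapsto x_{(1)}\otimes x_{(3)}\otimes x_{(4)}\otimes x_{(2)}$ that the paper performs with the annotation ``(using cocommutativity)''. Your treatment of the unit condition via $\epsilon_C(x_{(1)})\epsilon_C(x_{(2)})=\epsilon_C(x)$ likewise matches the paper, which simply declares that step immediate.
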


\begin{proof}
From \eqref{515t}, it is immediate that $x(1\otimes 1)=\epsilon_C(x)(1\otimes 1)$, where $\epsilon_C$ is the counit on $C$. Since $C$ is cocommutative, we have for $(r_1\otimes r_2)$, $(r_3\otimes r_4)\in 
R^e$
\begin{equation}
\begin{array}{ll}
x((r_1\otimes r_2)(r_3\otimes r_4))= x(r_1r_3\otimes r_4r_2)&=x_{(1)}(r_1r_3)\otimes x_{(2)}(r_4r_2)\\
&=x_{(1)}(r_1)x_{(2)}(r_3)\otimes x_{(3)}(r_4)x_{(4)}(r_2)\\
&=x_{(1)}(r_1)x_{(3)}(r_3)\otimes  x_{(4)}(r_4)x_{(2)}(r_2)\\
&=(x_{(1)}(r_1)\otimes x_{(2)}(r_2))(x_{(3)}(r_3)\otimes x_{(4)}(r_4))\\
&=x_{(1)}(r_1\otimes r_2)x_{(2)}(r_3\otimes r_4)\\
\end{array}
\end{equation}
\end{proof}

\begin{lem}\label{L5.5vq}
Let  $\mathcal U =(U,A_L,s_L,t_L,\Delta_L,\epsilon_L,S)$ and
$\mathcal U' =(U',A_L',s'_L,t'_L,\Delta'_L,\epsilon'_L,S')$ be Hopf algebroids over $k$. Let $P$ (resp. $P'$) be an SAYD-module over $\mathcal U$ (resp. $\mathcal U'$). Let $C$ be a cocommutative $k$-coalgebra and $D$ be a right $C$-comodule. Suppose that we are given the following data
\begin{equation} \Psi:C\longrightarrow Vect_k(U,U')\qquad \psi:C\longrightarrow Vect_k(A,A')\qquad \Omega: D\longrightarrow Vect_k(P,P')
\end{equation} such that

\smallskip
(1) $(\Psi,\psi)$ is a measuring of Hopf algebroids from $\mathcal U$ to $\mathcal U'$.

\smallskip
(2) $(\Psi,\Omega)$ is a comodule measuring from the right $U$-module $P$ to the right $U'$ module $P'$.

\smallskip
Then, we have:

\smallskip
(a) $(\psi^e,\Omega)$ is a comodule measuring from the right $A^e$-module $P$ to the right $A'^e$-module $P'$. 

\smallskip
(b) For each $y\in D$, the following morphism is well-defined
\begin{equation}\label{ra5.18}
y:U_\triangleleft\otimes_AP\longrightarrow U'_\triangleleft \otimes_{A'}P'\qquad y(u\otimes_A p):=\Psi(y_{(1)})(u)\otimes_{A'} \Omega(y_{(0)})(p)=y_{(1)}(u)\otimes_{A'}y_{(0)}(p)
\end{equation}
\end{lem}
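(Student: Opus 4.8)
The plan is to handle the two parts separately: part (a) is a direct verification that $\Omega$ remains compatible when the right $U$-action is restricted along $\eta_L$ to $A^e$, while part (b) rests on a single structural observation about AYD modules. For part (a), Lemma \ref{L5.4b} already guarantees that $\psi^e:C\longrightarrow Vect_k(A^e,A'^e)$ is a measuring of algebras, so it remains only to check the comodule-measuring compatibility with respect to the right $A^e$-module structures of \eqref{e5.09u}. I would take a typical element $a\otimes b\in A^e$, so that $p\cdot(a\otimes b)=ps_L(a)t_L(b)$, and expand $\Omega(y)(ps_L(a)t_L(b))$ using the hypothesis that $(\Psi,\Omega)$ is a comodule measuring for the right $U$-module structures, applied to $u=s_L(a)t_L(b)\in U$. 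This produces $\Omega(y_{(0)})(p)\,\Psi(y_{(1)})(s_L(a)t_L(b))$; expanding $\Psi(y_{(1)})$ as a measuring and invoking the source- and target-map compatibilities in \eqref{2.7} turns $\Psi(\cdot)(s_L(a))$ and $\Psi(\cdot)(t_L(b))$ into $s'_L(\psi(\cdot)(a))$ and $t'_L(\psi(\cdot)(b))$. Matching the outcome against the right $A'^e$-action on $\Omega(y_{(0)})(p)$ and the definition of $\psi^e$ in \eqref{515t} gives exactly the required identity, so $(\psi^e,\Omega)$ is a comodule measuring.

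For part (b), the crucial preliminary step is to express the left $A$-module structure on $P$ — the one used to form $U_\triangleleft\otimes_A P$ — in terms of the right $\mathcal U$-module structure through which $(\Psi,\Omega)$ is a comodule measuring. Setting $a=1_A$ in the anti-Yetter-Drinfeld compatibility \eqref{ditto} and using the left-comodule counit axiom $\epsilon_L(p_{(-1)})\cdot p_{(0)}=p$, I obtain $a\cdot p=pt_L(a)$ for all $a\in A$, $p\in P$, and likewise $a'\cdot p'=p't'_L(a')$ on $P'$. I expect this to be the main obstacle: a priori the comodule measuring $\Omega$ is built from the right $U$-action and has no interaction with the comodule left $A$-action appearing in the balanced tensor product, and it is precisely this identity that bridges the two.

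With this in hand, well-definedness reduces to checking that $y$ respects the single relation $t_L(a)u\otimes_A p=u\otimes_A pt_L(a)$ defining $U_\triangleleft\otimes_A P$, where $u\triangleleft a=t_L(a)u$. On one side I would expand $y(t_L(a)u\otimes p)=\Psi(y_{(1)})(t_L(a)u)\otimes_{A'}\Omega(y_{(0)})(p)$ using that $\Psi$ is a measuring together with the target compatibility, rewriting $t'_L(\psi(\cdot)(a))\,\Psi(\cdot)(u)$ as $\Psi(\cdot)(u)\triangleleft'\psi(\cdot)(a)$ and moving the latter across the tensor sign over $A'$. On the other side I would expand $y(u\otimes pt_L(a))=\Psi(y_{(1)})(u)\otimes_{A'}\Omega(y_{(0)})(pt_L(a))$ using the comodule measuring $(\Psi,\Omega)$ and again the target compatibility. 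Coassociativity of the right $C$-comodule $D$ aligns the Sweedler indices so that both sides equal $\Psi(y_{(2)})(u)\otimes_{A'}\psi(y_{(1)})(a)\cdot\Omega(y_{(0)})(p)$, which establishes that the morphism in \eqref{ra5.18} is well defined.
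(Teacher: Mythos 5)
Your proposal is correct and follows essentially the same route as the paper's proof: part (a) is the identical computation via Lemma \ref{L5.4b} and the expansion of $\Omega(y)(ps_L(a)t_L(b))$ using the comodule measuring and the source/target compatibilities, and part (b) hinges on the same bridging identity $ap = pt_L(a)$ (the paper's \eqref{apap}), followed by the same two-sided expansion over the relation defining $U_\triangleleft\otimes_A P$. Your only addition is to make explicit how \eqref{apap} follows from \eqref{ditto} (set $a=1_A$ and apply the comodule counit axiom), which the paper leaves implicit, and your observation that only coassociativity of the $D$-coaction — not cocommutativity of $C$ — is needed to align Sweedler indices is accurate.
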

\begin{proof}(a) Since $C$ is cocommutative, we already know from Lemma \ref{L5.4b}  that $\psi^e:C\longrightarrow Vect_k(A^e,A'^e)$ is a coalgebra measuring from
$A^e$ to $A'^e$. We now consider $(a\otimes b)\in A^e=A\otimes_kA^{op}$ and $p\in P$. By \eqref{e5.09u}, we know that $p(a\otimes b)=ps_L(a)t_L(b)$. For any $y\in D$, we now have
\begin{equation}
\begin{array}{ll}
\Omega(y)(p(a\otimes b))=\Omega(y)(ps_L(a)t_L(b))&=\Omega(y_{(0)})(p)\Psi(y_{(1)})(s_L(a)t_L(b))\\
&=\Omega(y_{(0)})(p)\Psi(y_{(1)})(s_L(a))\Psi(y_{(2)})(t_L(b))\\
&=\Omega(y_{(0)})(p)s_L'(\psi(y_{(1)})(a))t_L'(\psi(y_{(2)})(b))\\
&=\Omega(y_{(0)})(p)(\psi(y_{(1)})(a)\otimes \psi(y_{(2)})(b))\\
&=\Omega(y_{(0)})(p)(\psi^e(y_{(1)})(a\otimes b))\\
\end{array}
\end{equation}

\smallskip
(b) Since $P$ and $P'$ are SAYD modules, it follows from the definitions in \eqref{e5.09u}, \eqref{e5.9u} and the condition in \eqref{ditto} that
\begin{equation}\label{apap}
ap=pt_L(a) \qquad a'p'=p't_L'(a') \qquad a\in A, a'\in A', p\in P, p'\in P'
\end{equation} where the left hand side of the equalities in \eqref{apap} comes from the left $A$-module action on $P$ (resp. the left $A'$-module action on $P'$) appearing in the structure
map $\Delta_P:P\longrightarrow U_\triangleleft\otimes_A P$ (resp. the structure map $\Delta'_{P'}:P'\longrightarrow U'_\triangleleft\otimes_{A'} P'$). For $a\in A$, $u\in U$ and 
$p\in P$, we now see that
\begin{equation}\label{5.21x}
\begin{array}{lll}
y(u\otimes_Aap)&=\Psi(y_{(1)})(u)\otimes_{A'} \Omega(y_{(0)})(ap)&\\
&=\Psi(y_{(1)})(u)\otimes_{A'} \Omega(y_{(0)})(pt_L(a))&\mbox{(using \eqref{apap})}\\
&=\Psi(y_{(2)})(u)\otimes_{A'} \Omega(y_{(0)})(p)\Psi(y_{(1)})(t_L(a))& \\
&=\Psi(y_{(2)})(u)\otimes_{A'} \Omega(y_{(0)})(p)t_L'(\psi(y_{(1)})(a) )& \\
&=\Psi(y_{(2)})(u)\otimes_{A'}\psi(y_{(1)})(a)  \Omega(y_{(0)})(p)&\mbox{(using \eqref{apap})} \\
&=\Psi(y_{(2)})(u)\triangleleft \psi(y_{(1)})(a)\otimes_{A'}  \Omega(y_{(0)})(p)&\\
&=t_L'(\psi(y_{(1)})(a))\Psi(y_{(2)})(u)\otimes_{A'}  \Omega(y_{(0)})(p)&\mbox{(using \eqref{whiteblack})} \\
\end{array}
\end{equation} On the other hand, we also have
\begin{equation}\label{5.22x}
\begin{array}{lll}
y(u\triangleleft a\otimes_Ap)&=y(t_L(a)u\otimes_Ap)&\mbox{(using \eqref{whiteblack})} \\
&=\Psi(y_{(1)})(t_L(a)u)\otimes_{A'} \Omega(y_{(0)})(p)&\\
&=\Psi(y_{(1)})(t_L(a))\Psi(y_{(2)})(u)\otimes_{A'} \Omega(y_{(0)})(p)&\\
&=t_L'(\psi(y_{(1)})(a))\Psi(y_{(2)})(u)\otimes_{A'}  \Omega(y_{(0)})(p)&\\
\end{array}
\end{equation} This proves the result.
\end{proof}

We are now ready to introduce the notion of a comodule measuring between SAYD modules.

\begin{defn}\label{D5.6}
Let  $\mathcal U=(U,A_L,s_L,t_L,\Delta_L,\epsilon_L,S)$ and
$\mathcal U'=(U',A_L',s'_L,t'_L,\Delta'_L,\epsilon'_L,S')$ be Hopf algebroids over $k$. Let $P$ (resp. $P'$) be an SAYD-module over $\mathcal U$ (resp. $\mathcal U'$). 
Let $C$ be a cocommutative coalgebra. Then, a  $(C,\Psi,\psi)$-comodule measuring from $P$ to $P'$ consists of the following data
\begin{equation} \Psi:C\longrightarrow Vect_k(U,U')\qquad \psi:C\longrightarrow Vect_k(A,A')\qquad \Omega: D\longrightarrow Vect_k(P,P')
\end{equation} such that

\smallskip
(1) $(\Psi,\psi)$ is a measuring of Hopf algebroids from $\mathcal U$ to $\mathcal U'$.

\smallskip
(2) $(\Psi,\Omega)$ is a comodule measuring from the right $U$-module $P$ to the right $U'$ module $P'$.

\smallskip
(3) For each $y\in D$, the following diagram commutes
\begin{equation}\label{cd523}
\begin{CD}
P @>\Delta_P>> U_\triangleleft\otimes_A P\\
@Vy:=\Omega(y)VV @VyVV\\
P'@>\Delta'_{P'}>> U'_\triangleleft\otimes_{A'} P'\\
\end{CD}
\end{equation}
where the right vertical morphism is as defined in \eqref{ra5.18}
\end{defn} 

We will now construct universal measuring comodules between SAYD modules. By definition, the right comodules over a $k$-coalgebra $C$ are coalgebras over the comonad $\_\_\otimes_kC:Vect_k\longrightarrow Vect_k$. Accordingly, the forgetful functor $Comod-C\longrightarrow Vect_k$ from the category of right $C$-comodules has a right adjoint (see, for instance, 
\cite[$\S$ 2.4]{BBW}) that we denote by $\mathfrak R_C$, i.e., we have natural isomorphisms
\begin{equation}\label{radj5t}
Vect_k(D,V)\cong Comod-C(D,\mathfrak R_C(V))
\end{equation} for any $D\in Comod-C$ and $V\in Vect_k$.

\begin{Thm}\label{T5.7}
Let  $\mathcal U=(U,A_L,s_L,t_L,\Delta_L,\epsilon_L,S)$ and
$\mathcal U'=(U',A_L',s'_L,t'_L,\Delta'_L,\epsilon'_L,S')$ be Hopf algebroids over $k$. Let $P$ (resp. $P'$) be an SAYD-module over $\mathcal U$ (resp. $\mathcal U'$). Let $C$ be a cocommutative coalgebra and $(\Psi,\psi):C\longrightarrow V(\mathcal U,\mathcal U')$ be a measuring of Hopf algebroids. 

\smallskip Then, there exists a $(C,\Psi,\psi)$-comodule measuring 
$ \Theta:\mathcal Q_C(P,P')\longrightarrow Vect_k(P,P')$ satisfying the following property: given any $(C,\Psi,\psi)$-comodule measuring $\Omega:D\longrightarrow
Vect_k(P,P')$ from $P$ to $P'$, there exists a morphism $\chi:D\longrightarrow \mathcal Q_C(P,P')$ of right $C$-comodules such that the following diagram is commutative
\begin{equation}
\xymatrix{
\mathcal Q_C(P,P')\ar[rr]^{\Theta} && Vect_k(P,P')\\
&D\ar[ul]^\chi\ar[ur]_{\Omega}&\\
}
\end{equation}
\end{Thm}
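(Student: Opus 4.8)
The plan is to construct $\mathcal Q_C(P,P')$ by the same ``largest admissible subobject of a cofree object'' strategy used in Proposition \ref{P2.5}, but now carried out inside the category $Comod\text{-}C$ of right $C$-comodules rather than $CoCoalg_k$. First I would apply the adjunction \eqref{radj5t} to the vector space $Vect_k(P,P')$, producing the cofree right $C$-comodule $\mathfrak R_C(Vect_k(P,P'))$ together with its canonical counit morphism $\rho:\mathfrak R_C(Vect_k(P,P'))\longrightarrow Vect_k(P,P')$ of vector spaces. Any linear map $\Omega:D\longrightarrow Vect_k(P,P')$ out of a $C$-comodule $D$ factors uniquely through a $C$-comodule morphism $\chi:D\longrightarrow \mathfrak R_C(Vect_k(P,P'))$ with $\rho\circ\chi=\Omega$; this is exactly the universal factorization we want, so the whole task reduces to cutting $\mathfrak R_C(Vect_k(P,P'))$ down to the piece on which $\rho$ is a genuine $(C,\Psi,\psi)$-comodule measuring.

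Accordingly, I would define $\mathcal Q_C(P,P'):=\sum E$, where the sum ranges over all $C$-subcomodules $E\subseteq\mathfrak R_C(Vect_k(P,P'))$ such that the restriction $\rho|_E:E\longrightarrow Vect_k(P,P')$ satisfies the three defining conditions of Definition \ref{D5.6} (that is, $(\Psi,\rho|_E)$ is a comodule measuring of the underlying $U$-modules, and the compatibility diagram \eqref{cd523} with $\Delta_P,\Delta'_{P'}$ commutes for every $y\in E$). Since $C$-subcomodules are closed under sums, $\mathcal Q_C(P,P')$ is again a $C$-subcomodule, and I would check that the measuring conditions are preserved under sums: each condition in \eqref{cd523} and the module-measuring identity is linear and ``local'' in $y$, so if it holds on each summand $E$ it holds on their sum. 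This gives the universal object $\Theta:=\rho|_{\mathcal Q_C(P,P')}$.

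For the universal property, given any $(C,\Psi,\psi)$-comodule measuring $\Omega:D\longrightarrow Vect_k(P,P')$, I would use \eqref{radj5t} to obtain the $C$-comodule map $\chi:D\longrightarrow\mathfrak R_C(Vect_k(P,P'))$ with $\rho\circ\chi=\Omega$. Then $\chi(D)$ is a $C$-subcomodule, and because $\Omega$ already satisfies all conditions of Definition \ref{D5.6}, the restriction $\rho|_{\chi(D)}$ does too; hence $\chi(D)\subseteq\mathcal Q_C(P,P')$ by maximality, so $\chi$ corestricts to a map $D\longrightarrow\mathcal Q_C(P,P')$ with $\Theta\circ\chi=\Omega$, proving existence.

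The main obstacle I anticipate is verifying that the defining conditions of a $(C,\Psi,\psi)$-comodule measuring genuinely descend to arbitrary sums of subcomodules, and in particular that the well-definedness of the induced map $y:U_\triangleleft\otimes_A P\longrightarrow U'_\triangleleft\otimes_{A'}P'$ appearing in \eqref{ra5.18} is respected so that the square \eqref{cd523} even makes sense on $\mathcal Q_C(P,P')$. Here I would lean on Lemma \ref{L5.5vq}, which already establishes exactly this well-definedness from conditions (1) and (2), so that the only genuinely new content is the commutativity of \eqref{cd523}, and that condition is a linear equation in $y$ and thus manifestly stable under the sum.
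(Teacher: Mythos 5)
Your proposal is correct and follows essentially the same route as the paper: both construct $\mathcal Q_C(P,P')$ as the sum of all right $C$-subcomodules of the cofree comodule $\mathfrak R_C(Vect_k(P,P'))$ on which the canonical map $\rho$ restricts to a $(C,\Psi,\psi)$-comodule measuring, and both obtain the universal property by factoring $\Omega$ through $\chi:D\longrightarrow \mathfrak R_C(Vect_k(P,P'))$ via the adjunction \eqref{radj5t} and observing that $\chi(D)$ lands inside the sum. Your additional remarks---that the measuring conditions of Definition \ref{D5.6} are linear in $y$ and hence stable under sums of subcomodules, and that Lemma \ref{L5.5vq} guarantees the well-definedness of the map in \eqref{ra5.18} so the diagram \eqref{cd523} makes sense---are details the paper leaves implicit (``it is clear''), so your write-up is, if anything, slightly more complete.
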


\begin{proof}
We put $V:=Vect_k(P,P')$. By the adjunction in \eqref{radj5t}, there is a canonical morphism $\rho(V):\mathfrak R_C(V)\longrightarrow V$ of vector spaces. We set $\mathcal Q_C(P,P'):=
\sum Q$, where the sum is taken over all right $C$-subcomodules of $\mathfrak R_C(V)$ such that the restriction $\rho(V)|Q:Q\longrightarrow V=Vect_k(P,P')$ is a $(C,\Psi,\psi)$-comodule measuring from $P$ to $P'$ in the sense of Definition \ref{D5.6}. It is clear that $\Theta:\rho(V)|\mathcal Q_C(P,P'): \mathcal Q_C(P,P')\longrightarrow V=Vect_k(P,P')$ is a $(C,\Psi,\psi)$-comodule measuring. 

\smallskip
Additionally, given a  $(C,\Psi,\psi)$-comodule measuring $ \Omega:D\longrightarrow
Vect_k(P,P')$ from $P$ to $P'$, the adjunction in \eqref{radj5t} gives a morphism $\chi:D\longrightarrow \mathfrak R_C(V)$ of right $C$-comodules satisfying $\rho(V)\circ \chi=\Omega$. But then we notice that $\rho(V)|\chi(D):\chi(D)\longrightarrow 
V$ is a $(C,\Psi,\psi)$-comodule measuring, whence it follows that the image $\chi(D)\subseteq \mathcal Q_C(P,P')$. The result is now clear.
\end{proof}

\begin{lem}
\label{Lem5.8} Let  $\mathcal U=(U,A_L,s_L,t_L,\Delta_L,\epsilon_L,S)$, 
$\mathcal U'=(U',A_L',s'_L,t'_L,\Delta'_L,\epsilon'_L,S')$ and  $\mathcal U''=(U'',A''_L,s''_L,t''_L,\Delta''_L,\epsilon''_L,S'')$ be Hopf algebroids over $k$.  Let $P$, $P'$ and $P''$ be SAYD modules over $\mathcal U$, $\mathcal U'$ and $\mathcal U''$ respectively.  Suppose that we have:

\smallskip
(1) $ \Psi:C\longrightarrow Vect_k(U,U')$, $ \psi:C\longrightarrow Vect_k(A,A')$ and $ \Omega: D\longrightarrow Vect_k(P,P')$ giving the data of a comodule measuring  from
$P$ to $P'$.

\smallskip
(2) $ \Psi':C'\longrightarrow Vect_k(U',U'')$, $ \psi':C'\longrightarrow Vect_k(A',A'')$ and $ \Omega': D'\longrightarrow Vect_k(P',P'')$ giving the data of a comodule measuring  from
$P'$ to $P''$.

\smallskip
Then, the following
\begin{equation}\label{xcomp527}
\begin{array}{c}
(\Psi',\psi')\circ (\Psi,\psi): C\otimes C'\xrightarrow{(\Psi,\psi)\otimes (\Psi',\psi')}V(\mathcal U,\mathcal U') \otimes V(\mathcal U',\mathcal U'')\overset{\circ}{\longrightarrow}V(\mathcal U,\mathcal U'')  \\
\Omega'\circ \Omega: D\otimes D'\xrightarrow{\Omega\otimes \Omega'}Vect_k(P,P')\otimes Vect_k(P',P'')\overset{\circ}{\longrightarrow}Vect_k(P,P'')\\
\end{array}
\end{equation} 
gives the data of a comodule measuring  from $P$ to $P''$. There is also a canonical morphism of right $(C\otimes C')$-comodules
\begin{equation}\label{528k}
\mathcal Q_C(P,P')\otimes \mathcal Q_{C'}(P',P'')
\longrightarrow \mathcal Q_{C\otimes C'}(P,P'')
\end{equation}
\end{lem}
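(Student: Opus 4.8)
The plan is to treat the two assertions in turn. For the first, I would verify that the pair of composites displayed in \eqref{xcomp527} satisfies the three conditions of Definition \ref{D5.6}. Condition (1), that $(\Psi',\psi')\circ(\Psi,\psi):C\otimes C'\longrightarrow V(\mathcal U,\mathcal U'')$ is a measuring of Hopf algebroids, is precisely the content of Proposition \ref{P2.6}, so nothing new is needed there. Throughout I would equip $D\otimes D'$ with its canonical right $(C\otimes C')$-comodule structure, so that for $y\otimes y'\in D\otimes D'$ the coaction reads $(y\otimes y')_{(0)}\otimes(y\otimes y')_{(1)}=(y_{(0)}\otimes y'_{(0)})\otimes(y_{(1)}\otimes y'_{(1)})$, and I would read $(\Omega'\circ\Omega)(y\otimes y')$ as $\Omega'(y')\circ\Omega(y)$.

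For condition (2), I would carry out a direct computation: for $p\in P$ and $u\in U$,
\begin{equation*}
(\Omega'\circ\Omega)(y\otimes y')(pu)=\Omega'(y')\big(\Omega(y_{(0)})(p)\,\Psi(y_{(1)})(u)\big)=\Omega'(y'_{(0)})(\Omega(y_{(0)})(p))\,\Psi'(y'_{(1)})(\Psi(y_{(1)})(u)),
\end{equation*}
where the two equalities use that $(\Psi,\Omega)$ and $(\Psi',\Omega')$ are comodule measurings. Matching Sweedler indices against the coaction on $D\otimes D'$ recorded above identifies the right-hand side with $(\Omega'\circ\Omega)((y\otimes y')_{(0)})(p)\cdot(\Psi'\circ\Psi)((y\otimes y')_{(1)})(u)$, which is exactly the comodule measuring identity for the composite from the right $U$-module $P$ to the right $U''$-module $P''$.

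Condition (3), the commutativity of \eqref{cd523} for the composite, is the step I expect to be the main obstacle, since it requires chasing two separate coactions simultaneously through the structure maps $\Delta_P$, $\Delta'_{P'}$ and $\Delta''_{P''}$. Here I would first invoke Lemma \ref{L5.5vq}(b) for the composite to know that the associated map $(y\otimes y'):U_\triangleleft\otimes_A P\longrightarrow U''_\triangleleft\otimes_{A''}P''$ of \eqref{ra5.18} is well-defined. Then, applying the squares \eqref{cd523} for $\Omega$ and $\Omega'$ in succession, I would compute for $p\in P$:
\begin{equation*}
\Delta''_{P''}\big(\Omega'(y')(\Omega(y)(p))\big)=y'\big(\Delta'_{P'}(\Omega(y)(p))\big)=y'\big(y(p_{(-1)}\otimes_A p_{(0)})\big)=\Psi'(y'_{(1)})(\Psi(y_{(1)})(p_{(-1)}))\otimes_{A''}\Omega'(y'_{(0)})(\Omega(y_{(0)})(p_{(0)})).
\end{equation*}
On the other side, expanding $(y\otimes y')(\Delta_P(p))$ through the definition \eqref{ra5.18} and the tensor coaction produces precisely the same expression, so \eqref{cd523} commutes for $y\otimes y'$. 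Together with conditions (1) and (2), this establishes that the data in \eqref{xcomp527} form a comodule measuring from $P$ to $P''$.

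For the second assertion, I would apply the first to the universal comodule measurings. By Theorem \ref{T5.7} the universal objects $\mathcal Q_C(P,P')$ and $\mathcal Q_{C'}(P',P'')$ carry universal comodule measurings $\Theta$ and $\Theta'$. The first assertion, applied to $\Theta$ and $\Theta'$, shows that $\Theta'\circ\Theta:\mathcal Q_C(P,P')\otimes\mathcal Q_{C'}(P',P'')\longrightarrow Vect_k(P,P'')$ is a $(C\otimes C',\Psi'\circ\Psi,\psi'\circ\psi)$-comodule measuring from $P$ to $P''$, whose underlying measuring of Hopf algebroids is the composite furnished by Proposition \ref{P2.6}. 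Applying the universal property of $\mathcal Q_{C\otimes C'}(P,P'')$ from Theorem \ref{T5.7} then furnishes the canonical morphism of right $(C\otimes C')$-comodules in \eqref{528k}, characterised by compatibility $\Theta''\circ\chi=\Theta'\circ\Theta$ with the universal measuring $\Theta''$ on the target.
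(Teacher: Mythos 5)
Your proposal is correct and follows essentially the same route as the paper's proof: Proposition \ref{P2.6} for the Hopf algebroid measuring, a direct Sweedler computation for the right-module condition (which the paper leaves to the reader), the key identity that the map attached to $y\otimes y'$ factors as $y'\circ y$ on $U_\triangleleft\otimes_A P$ to deduce \eqref{cd523} from the two constituent squares, and the universal property of $\mathcal Q_{C\otimes C'}(P,P'')$ applied to the composite of the universal measurings for \eqref{528k}. No gaps.
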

\begin{proof}
We know from Proposition \ref{P2.6} that $(\Psi',\psi')\circ (\Psi,\psi): C\otimes C'\longrightarrow V(\mathcal U,\mathcal U'') $ is a measuring of Hopf algebroids. It may also be directly verified that  $((\Psi',\psi')\circ (\Psi,\psi),\Omega'\circ \Omega)$ is a comodule measuring from the right $U$-module $P$ to the right $U''$-module $P''$. To check the condition \eqref{cd523} in Definition \ref{D5.6}, we observe that for any $y\otimes y'\in D\otimes D'$, $u\in U$ and $p\in P$:
\begin{equation}
(y\otimes y')(u\otimes_Ap)=(y\otimes y')_{(1)}(u)\otimes_{A''}(y\otimes y')_{(0)}(p)=y'_{(1)}(y_{(1)}(u))\otimes_{A''} y'_{(0)}(y_{(0)}(p))=y'(y(u\otimes_Ap)))
\end{equation} Since the measurings $(\Psi,\psi,\Omega)$ and $(\Psi',\psi',\Omega')$ both satisfy the condition in \eqref{cd523}, it is now clear that so does $((\Psi',\psi')\circ (\Psi,\psi),\Omega'\circ \Omega)$. Hence, \eqref{xcomp527} gives the data of a  comodule measuring from $P$ to $P''$. By definition, $\mathcal Q_C(P,P')$ (resp. $ \mathcal Q_{C'}(P',P'')$)  is a measuring comodule
from $P$ to $P'$ (resp. from $P'$ to $P''$). From \eqref{xcomp527} it now follows that $\mathcal Q_C(P,P')\otimes \mathcal Q_{C'}(P',P'')$    is a measuring comodule
from $P$ to $P''$. The morphism in \eqref{528k} is now obtained by the universal property of $\mathcal Q_{C\otimes C'}(P,P'')$. 
\end{proof}

We now consider the ``global category of comodules'' $Comod_k$ whose objects are pairs $(C,D)$, where $C$ is a cocommutative $k$-coalgebra and $D$ is a $C$-comodule. A morphism $(f,g):(C,D)\longrightarrow (C',D')$ in $Comod_k$ consists of a $k$-coalgebra morphism $f:C\longrightarrow C'$ and a morphism $g:D\longrightarrow D'$ of $C'$-comodules, where $D$ is treated as a $C'$-comodule by corestriction of scalars. It is clear that putting $(C,D)\otimes (C',D'):=(C\otimes C',D\otimes D')$ makes $Comod_k$ into a symmetric monoidal category.

\begin{Thm}\label{T5.9hh}
Let  $SAYD_k$ be the category given by:

\smallskip
(a) Objects: pairs $(\mathcal U,P)$, where $\mathcal U$ is a Hopf-algebroid and $P$ is an $SAYD$-module over $\mathcal U$

\smallskip
(b) Hom-objects: for pairs $(\mathcal U,P)$, $(\mathcal U',P')\in SAYD_k$, we set
\begin{equation}
SAYD_k((\mathcal U,P),(\mathcal U',P')):=(\mathcal M_c(\mathcal U,\mathcal U'), \mathcal Q_{\mathcal M_c(\mathcal U,\mathcal U')}(P,P'))\in Comod_k
\end{equation} Then, $SAYD_k$ is enriched over the symmetric monoidal category $Comod_k$.
\end{Thm}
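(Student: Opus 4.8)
The plan is to establish the enrichment of $SAYD_k$ over $Comod_k$ by exhibiting the required composition and identity morphisms in $Comod_k$, and then verifying that they satisfy the associativity and unit axioms of an enriched category. The hom-object $SAYD_k((\mathcal U,P),(\mathcal U',P'))=(\mathcal M_c(\mathcal U,\mathcal U'),\mathcal Q_{\mathcal M_c(\mathcal U,\mathcal U')}(P,P'))$ has two components, and the key observation is that these two components interlock correctly: the first coordinate is already known to give an enrichment of $HAlg_k$ over $CoCoalg_k$ by Theorem \ref{T2.7}, so the work lies in handling the comodule coordinate compatibly.

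For the composition morphism, given objects $(\mathcal U,P)$, $(\mathcal U',P')$ and $(\mathcal U'',P'')$, I would proceed in two stages. First, in the coalgebra coordinate, Theorem \ref{T2.7} already supplies a coalgebra morphism $\mathcal M_c(\mathcal U,\mathcal U')\otimes\mathcal M_c(\mathcal U',\mathcal U'')\longrightarrow\mathcal M_c(\mathcal U,\mathcal U'')$, obtained from the universal property in Proposition \ref{P2.5} applied to the composite measuring of Proposition \ref{P2.6}; write $C:=\mathcal M_c(\mathcal U,\mathcal U')$ and $C':=\mathcal M_c(\mathcal U',\mathcal U'')$. Second, in the comodule coordinate, Lemma \ref{Lem5.8} gives a morphism $\mathcal Q_C(P,P')\otimes\mathcal Q_{C'}(P',P'')\longrightarrow\mathcal Q_{C\otimes C'}(P,P'')$ of right $(C\otimes C')$-comodules. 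To finish I must transport this target along the coalgebra composition map to land in $\mathcal Q_{\mathcal M_c(\mathcal U,\mathcal U'')}(P,P'')$: the point is that a $(C\otimes C',\Psi,\psi)$-comodule measuring corestricts, via the coalgebra morphism $C\otimes C'\to\mathcal M_c(\mathcal U,\mathcal U'')$, to a measuring over $\mathcal M_c(\mathcal U,\mathcal U'')$, and then the universal property of $\mathcal Q_{\mathcal M_c(\mathcal U,\mathcal U'')}(P,P'')$ from Theorem \ref{T5.7} produces the required comodule morphism. The pair of these two maps constitutes a morphism in $Comod_k$.

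For the identity, given $(\mathcal U,P)$, the unit map $k\longrightarrow\mathcal M_c(\mathcal U,\mathcal U)$ of Theorem \ref{T2.7} picks out the trivial coalgebra measuring $\mathrm{id}_U,\mathrm{id}_A$, and the identity map $\mathrm{id}_P\in Vect_k(P,P)$ together with the trivial $k$-comodule $k$ gives a $(k,\mathrm{id}_U,\mathrm{id}_A)$-comodule measuring; by the universal property of Theorem \ref{T5.7} this factors through $\mathcal Q_{\mathcal M_c(\mathcal U,\mathcal U)}(P,P)$ along the unit of the coalgebra coordinate, yielding the unit morphism $(k,k)\longrightarrow SAYD_k((\mathcal U,P),(\mathcal U,P))$ in $Comod_k$.

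The main obstacle, and the part requiring genuine care, is verifying the associativity and unit coherence axioms for the enriched composition. In the coalgebra coordinate these hold because they are exactly the axioms already checked for the enrichment of Theorem \ref{T2.7}. In the comodule coordinate, however, the composition is only characterized up to the universal property of the $\mathcal Q$-comodules, so the two ways of composing three hom-objects both map into $\mathcal Q_{\mathcal M_c(\mathcal U,\mathcal U''')}(P,P''')$ and must be shown to coincide. The strategy here mirrors the argument in Theorem \ref{T4.3}: rather than compare the induced comodule morphisms directly, I would postcompose with the final comodule measuring $\Theta:\mathcal Q_{\mathcal M_c(\mathcal U,\mathcal U''')}(P,P''')\longrightarrow Vect_k(P,P''')$ and check equality at the level of the underlying linear maps $Vect_k(P,P''')$, where both composites are computed explicitly as iterated applications of the $\Omega$'s, as in the displayed calculation $y'(y(u\otimes_A p))=y'_{(1)}(y_{(1)}(u))\otimes_{A''}y'_{(0)}(y_{(0)}(p))$ in the proof of Lemma \ref{Lem5.8}. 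Since $\Theta$ is the final such measuring, agreement after postcomposition forces agreement of the two comodule morphisms, establishing associativity; the unit axioms follow by the same postcomposition technique using the explicit form of the trivial measurings.
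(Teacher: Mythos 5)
Your proposal is correct and follows essentially the same route as the paper: composition is built from the coalgebra composition of Theorem \ref{T2.7} together with the comodule morphism of Lemma \ref{Lem5.8}, then corestricted along $\mathcal M_c(\mathcal U,\mathcal U')\otimes\mathcal M_c(\mathcal U',\mathcal U'')\longrightarrow\mathcal M_c(\mathcal U,\mathcal U'')$ and factored through $\mathcal Q_{\mathcal M_c(\mathcal U,\mathcal U'')}(P,P'')$ via Theorem \ref{T5.7}, with the unit handled exactly as in the paper. Your additional sketch of the associativity and unit coherence axioms (by postcomposing with the final measuring $\Theta$ and comparing underlying linear maps, as in Theorem \ref{T4.3}) is sound and in fact supplies detail the paper's proof leaves implicit.
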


\begin{proof} For any $(\mathcal U,P)\in SAYD_k$,  we know from the proof of Theorem \ref{T2.7} that there is a morphism $k\longrightarrow \mathcal M_c(\mathcal U,\mathcal U)$  of $k$-coalgebras. Using scalar multiples of the identity map and applying  the universal property in Theorem \ref{T5.7}, we obtain a  morphism $k\longrightarrow  \mathcal Q_{\mathcal M_c(\mathcal U,\mathcal U)}(P,P)$. We now  consider $(\mathcal U,P)$, $(\mathcal U',P')$,  $(\mathcal U'',P'')\in SAYD_k$. Applying Lemma \ref{Lem5.8} with $C=\mathcal M_c(\mathcal U,\mathcal U')$ and
$C'=\mathcal M_c(\mathcal U',\mathcal U'')$,  we obtain a morphism $\mathcal Q_{\mathcal M_c(\mathcal U,\mathcal U')}(P,P')\otimes \mathcal Q_{\mathcal M_c(\mathcal U',\mathcal U'')}(P',P'')\longrightarrow \mathcal Q_{\mathcal M_c(\mathcal U,\mathcal U')\otimes \mathcal M_c(\mathcal U',\mathcal U'')}(P,P'')$ of $(\mathcal M_c(\mathcal U,\mathcal U')\otimes \mathcal M_c(\mathcal U',\mathcal U''))$-comodules. From the proof of Theorem \ref{T2.7}, we already have a morphism $\mathcal M_c(\mathcal U,\mathcal U')\otimes \mathcal M_c(\mathcal U',\mathcal U'')\longrightarrow \mathcal M_c(\mathcal U,\mathcal U'')$ of $k$-coalgebras. Combining, we have a morphism in $Comod_k$
\begin{equation}\label{ar531}
SAYD_k((\mathcal U,P),(\mathcal U',P'))\otimes SAYD_k((\mathcal U',P'),(\mathcal U'',P''))\longrightarrow (\mathcal M_c(\mathcal U,\mathcal U''), \mathcal Q_{\mathcal M_c(\mathcal U,\mathcal U')\otimes \mathcal M_c(\mathcal U',\mathcal U'')}(P,P''))
\end{equation}  In \eqref{ar531}, $ \mathcal Q_{\mathcal M_c(\mathcal U,\mathcal U')\otimes \mathcal M_c(\mathcal U',\mathcal U'')}(P,P'')$ becomes a $\mathcal M_c(\mathcal U,\mathcal U'')$-comodule via the morphism  $\mathcal M_c(\mathcal U,\mathcal U')\otimes \mathcal M_c(\mathcal U',\mathcal U'')\longrightarrow \mathcal M_c(\mathcal U,\mathcal U'')$ of $k$-coalgebras. From the proof of Theorem \ref{T2.7}, we also know that the morphism  $\mathcal M_c(\mathcal U,\mathcal U')\otimes \mathcal M_c(\mathcal U',\mathcal U'')\longrightarrow \mathcal M_c(\mathcal U,\mathcal U'')$ arises from the universal property of $\mathcal M_c(\mathcal U,\mathcal U'')$ applied to the measuring
$\mathcal M_c(\mathcal U,\mathcal U')\otimes \mathcal M_c(\mathcal U',\mathcal U'')\longrightarrow V(\mathcal U,\mathcal U')\otimes V(\mathcal U',\mathcal U'')\xrightarrow{\circ} V(\mathcal U,\mathcal U'')$. Hence, the  map $ \mathcal Q_{\mathcal M_c(\mathcal U,\mathcal U')\otimes \mathcal M_c(\mathcal U',\mathcal U'')}(P,P'')\longrightarrow
Vect_k(P,P'')$ gives a measuring when treated as a $\mathcal M_c(\mathcal U,\mathcal U'')$-comodule. The universal property of $\mathcal Q_{\mathcal M_c(\mathcal U,\mathcal U'')}(P,P'')$
as in Theorem \ref{T5.7}  now yields a morphism
\begin{equation}\label{ar532}
 (\mathcal M_c(\mathcal U,\mathcal U''), \mathcal Q_{\mathcal M_c(\mathcal U,\mathcal U')\otimes \mathcal M_c(\mathcal U',\mathcal U'')}(P,P''))\longrightarrow  (\mathcal M_c(\mathcal U,\mathcal U''), \mathcal Q_{\mathcal M_c(\mathcal U,\mathcal U'')}(P,P'') )
\end{equation} in $Comod_k$. Composing \eqref{ar532} with \eqref{ar531}, we obtain the required composition of Hom-objects $SAYD_k((\mathcal U,P),(\mathcal U',P'))\otimes SAYD_k((\mathcal U',P'),(\mathcal U'',P''))\longrightarrow SAYD_k((\mathcal U,P),(\mathcal U'',P''))$. This proves the result.
\end{proof}

\section{Comodule measurings and morphisms on cyclic (co)homology}

Throughout this section, we fix the following: let  $\mathcal U=(U,A_L,s_L,t_L,\Delta_L,\epsilon_L,S)$, 
and $\mathcal U'=(U',A_L',s'_L,t'_L,\Delta'_L,\epsilon'_L,S')$  be Hopf algebroids over $k$.  Let $P$ and $P'$ be SAYD modules over $\mathcal U$ and $\mathcal U'$ respectively. Let $(\Psi,\psi):C\longrightarrow V(\mathcal U,\mathcal U')$ be a cocommutative measuring and let $\Omega:D\longrightarrow Vect_k(P,P')$ be a $(C,\Psi,\psi)$-comodule measuring from $P$ to $P'$.

\smallskip
Since $\mathcal U$, $\mathcal U'$ are Hopf algebroids, we have recalled in Section 5 that the morphisms $\beta(\mathcal U):{_\blacktriangleright}U\otimes_{A^{op}}U_{\triangleleft}\longrightarrow U_\triangleleft \otimes_A {_\triangleright}U$ and $\beta(\mathcal U'):{_\blacktriangleright}U'\otimes_{A'^{op}}U'_{\triangleleft}\longrightarrow U'_\triangleleft \otimes_{A'} {_\triangleright}U'$  in the notation of \eqref{hg5.5} are bijections. We now need the following result.

\begin{lem}\label{L6.1}
For each $x\in C$, the following diagram commutes:
\begin{equation}\label{cd6.1}
\begin{CD}
U_\triangleleft \otimes_A {_\triangleright}U @>\beta(\mathcal U)^{-1}>>  {_\blacktriangleright}U\otimes_{A^{op}}U_{\triangleleft}\\
@VxVV @VxVV\\
 U'_\triangleleft \otimes_{A'} {_\triangleright}U'@>\beta(\mathcal U')^{-1}>>{_\blacktriangleright}U'\otimes_{A'^{op}}U'_{\triangleleft}\\
\end{CD}
\end{equation} Here, the left vertical map is given by $u^1\otimes_{A}u^2\mapsto x_{(1)}(u^1)\otimes_{A'}x_{(2)}(u^2)$ and the right vertical map
by $u^1\otimes_{A^{op}}u^2\mapsto x_{(1)}(u^1)\otimes_{A'^{op}}x_{(2)}(u^2)$.
\end{lem}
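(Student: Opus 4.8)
The plan is to exploit the fact that the Hopf--Galois maps $\beta(\mathcal U)$ and $\beta(\mathcal U')$ are bijections, so that the commutativity of \eqref{cd6.1}, whose horizontal arrows are the inverses $\beta(\mathcal U)^{-1}$ and $\beta(\mathcal U')^{-1}$, is equivalent to the commutativity of the square obtained by reversing the horizontal arrows, namely the identity
\begin{equation*}
x\circ \beta(\mathcal U) = \beta(\mathcal U')\circ x : {}_\blacktriangleright U\otimes_{A^{op}}U_\triangleleft \longrightarrow U'_\triangleleft\otimes_{A'}{}_\triangleright U'.
\end{equation*}
Indeed, precomposing the desired identity $x\circ \beta(\mathcal U)^{-1}=\beta(\mathcal U')^{-1}\circ x$ with $\beta(\mathcal U)$ and postcomposing with $\beta(\mathcal U')$ yields exactly this reversed identity, and since both $\beta$'s are invertible the two statements are equivalent. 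Before carrying this out, I would record that the two vertical maps are genuinely defined: unwinding the actions in \eqref{whiteblack} shows that $U_\triangleleft\otimes_A{}_\triangleright U$ is precisely $U\otimes_{A_L}U$ for the bimodule structure \eqref{2.1eq}, so the left vertical map is the map of Lemma \ref{L2.4}, while ${}_\blacktriangleright U\otimes_{A^{op}}U_\triangleleft$ coincides with $U\otimes_{A_R}U$ for the bimodule structure \eqref{r3.7rl}, so the right vertical map is the one shown well-defined in Lemma \ref{L3.2}.

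The heart of the argument is then a direct Sweedler computation on a representative $u\otimes_{A^{op}}v$. Along the top-then-right route, \eqref{hg5.5} gives $\beta(\mathcal U)(u\otimes_{A^{op}}v)=u_{(1)}\otimes_A u_{(2)}v$, and applying the left vertical map together with the multiplicativity of the measuring $\Psi$ yields
\begin{equation*}
x\bigl(\beta(\mathcal U)(u\otimes_{A^{op}}v)\bigr)=x_{(1)}(u_{(1)})\otimes_{A'} x_{(2)}(u_{(2)})\,x_{(3)}(v),
\end{equation*}
where I have used coassociativity of $C$ to relabel the iterated coproduct. Along the right-then-top route, the right vertical map gives $x_{(1)}(u)\otimes_{A'^{op}}x_{(2)}(v)$, and applying $\beta(\mathcal U')$ produces $(x_{(1)}(u))_{(1)}\otimes_{A'}(x_{(1)}(u))_{(2)}\cdot x_{(2)}(v)$. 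The key step is to expand the inner coproduct using the compatibility of $(\Psi,\psi)$ with the comultiplications, i.e.\ the right-hand square of \eqref{2.8}, which gives $\Delta'_L(x_{(1)}(u))=x_{(1)}(u_{(1)})\otimes_{A'}x_{(2)}(u_{(2)})$ after another application of coassociativity; substituting this turns the second route into $x_{(1)}(u_{(1)})\otimes_{A'}x_{(2)}(u_{(2)})\,x_{(3)}(v)$, matching the first route exactly.

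I expect the main obstacle to be purely bookkeeping: keeping the Sweedler indices on $C$ consistent across the two routes, since the two-fold coproduct used to split $\Psi$ over the product $u_{(2)}v$ in the first route must be matched against the comultiplication $\Delta_C$ followed by the coproduct compatibility \eqref{2.8} in the second route. Both reduce, via coassociativity, to the same three-fold coproduct $x_{(1)}\otimes x_{(2)}\otimes x_{(3)}$ acting as $x_{(1)}(u_{(1)})\otimes_{A'} x_{(2)}(u_{(2)})\,x_{(3)}(v)$, and once the indices are aligned the equality is immediate. It is worth emphasizing that cocommutativity of $C$ plays no role here; only the measuring property of $\Psi$ and the coproduct compatibility in \eqref{2.8} are needed.
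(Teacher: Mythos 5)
Your proposal is correct and follows essentially the same route as the paper: the paper likewise invokes invertibility of the Hopf--Galois maps to reduce to the square with $\beta(\mathcal U)$ and $\beta(\mathcal U')$ as horizontal arrows, and verifies it by the identical Sweedler computation $x(\beta(\mathcal U)(u\otimes_{A^{op}}v))=x_{(1)}(u_{(1)})\otimes_{A'}x_{(2)}(u_{(2)})x_{(3)}(v)=(x_{(1)}(u))_{(1)}\otimes_{A'}(x_{(1)}(u))_{(2)}x_{(2)}(v)=\beta(\mathcal U')(x_{(1)}(u)\otimes x_{(2)}(v))$, using the coproduct compatibility \eqref{2.8} and coassociativity. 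One small caveat to your closing remark: while cocommutativity of $C$ indeed plays no role in this chain of equalities, it is used for the well-definedness of the left vertical map, which you correctly delegate to Lemma \ref{L2.4}.
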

\begin{proof}
It is easy to verify that the vertical morphisms in \eqref{cd6.1} are well-defined. Further, since $\beta(\mathcal U)$ and $\beta(\mathcal U')$ are invertible, it suffices to check that the following diagram commutes
\begin{equation}\label{cd6.2e}
\begin{CD}
{_\blacktriangleright}U\otimes_{A^{op}}U_{\triangleleft}@>\beta(\mathcal U)>> U_\triangleleft \otimes_A {_\triangleright}U\\
@VxVV @VVxV\\
{_\blacktriangleright}U'\otimes_{A'^{op}}U'_{\triangleleft}@>\beta(\mathcal U')>> U'_\triangleleft \otimes_{A'} {_\triangleright}U'\\
\end{CD}
\end{equation} We now see that for $u\otimes_{A^{op}}v\in {_\blacktriangleright}U\otimes_{A^{op}}U_{\triangleleft}$ and $x\in C$, we have
\begin{equation*}
x(\beta(\mathcal U)(u\otimes_{A^{op}} v))=x(u_{(1)}\otimes_A u_{(2)}v)=x_{(1)}(u_{(1)})\otimes_{A'}x_{(2)}(u_{(2)})x_{(3)}(v)
=(x_{(1)}(u))_{(1)}\otimes_{A'}(x_{(1)}(u))_{(2)}x_{(2)}(v)=\beta(\mathcal U')(x_{(1)}(u)\otimes x_{(2)}(v))
\end{equation*} This proves the result.
\end{proof}

From Lemma \ref{L6.1}, it follows in the notation of \eqref{sty5.6} that we have 
\begin{equation}\label{pm6}
x_{(1)}(u_+)\otimes_{A'^{op}}x_{(2)}(u_-)=x(u_+\otimes_{A^{op}}u_-)=\beta(\mathcal U')^{-1}(x(u\otimes_A1))=x(u)_+\otimes_{A'^{op}}x(u)_-
\end{equation} for each $u\in U$. We now recall from \cite[Theorem 4.1]{KoKr} that the Hochschild homology groups
$HH_\bullet(\mathcal U;P)$ (resp. the cyclic homology groups $HC_\bullet(\mathcal U;P)$) of $\mathcal U$ with coefficients in the SAYD module $P$ are obtained from the cyclic module $C_\bullet(\mathcal U;P):=P\otimes_{A^{op}}({_\blacktriangleright}U_{\triangleleft})^{\otimes_{A^{op}}\bullet}$ with operators as follows
(where $\bar{u}:=u^1\otimes_{A^{op}}\otimes ...\otimes_{A^{op}}u^n$, $p\in P$)
\begin{equation}\label{facdeg6p}
\begin{array}{l}
d_i(p\otimes_{A^{op}}\bar{u}):=\left\{
\begin{array}{ll}
p\otimes_{A^{op}}u^1\otimes_{A^{op}}\dots \otimes_{A^{op}}u^{n-1}t_L(\epsilon_L(u^n)) &\qquad\mbox{if $i=0$}\\
p\otimes_{A^{op}}u^1\otimes_{A^{op}}\dots \otimes_{A^{op}}u^{n-i}u^{n-i+1}\otimes_{A^{op}}\dots &\qquad\mbox{if $1\leq i\leq n-1$}\\
pu^1\otimes_{A^{op}}u^2\otimes_{A^{op}}\dots \otimes_{A^{op}}u^n&\qquad\mbox{if $i=n$}\\
\end{array}\right.\\
s_i(p\otimes_{A^{op}}\bar{u}):=\left\{
\begin{array}{ll}
p\otimes_{A^{op}}u^1\otimes_{A^{op}}\dots \otimes_{A^{op}}u^n\otimes_{A^{op}}1&\qquad \mbox{if $i=0$}\\
p\otimes_{A^{op}} \dots \otimes_{A^{op}}u^{n-i}\otimes_{A^{op}}1\otimes_{A^{op}}u^{n-i+1}\otimes_{A^{op}}\dots &\qquad\mbox{if $1\leq i\leq n-1$}\\
p\otimes_{A^{op}}1\otimes_{A^{op}}u^1\otimes_{A^{op}}\dots \otimes_{A^{op}}u^n&\qquad\mbox{if $i=n$}\\
\end{array}\right.\\
t_n(p\otimes_{A^{op}}\bar{u}):=p_{(0)}u^1_+\otimes_{A^{op}}u^2_+\otimes_{A^{op}}\dots \otimes_{A^{op}}u^n_+\otimes_{A^{op}}u^n_-\dots u^1_-p_{(-1)}\\
\end{array}
\end{equation} We now have the following result.

\begin{thm}\label{P6.2h}
For each $y\in  D$, the family
\begin{equation}\label{6.5fq} \underline\Omega_n(y):C_n(\mathcal U;P)\longrightarrow C_n(\mathcal U';P') \qquad p\otimes u^1\otimes ...\otimes u^n\mapsto y(p\otimes u^1\otimes ...\otimes u^n)=y_{(0)}(p)\otimes y_{(1)}(u^1)\otimes ... \otimes y_{(n)}(u^n)
\end{equation} for $n\geq 0$ gives a morphism of cyclic modules. In particular, we have  induced morphisms
\begin{equation}\underline\Omega_\bullet^{hoc}(y):HH_\bullet(\mathcal U;P)\longrightarrow HH_\bullet(\mathcal U';P')\qquad \underline\Omega_\bullet^{cy}(y):HC_\bullet(\mathcal U;P)\longrightarrow HC_\bullet(\mathcal U';P')
\end{equation}
 on Hochschild and cyclic homologies for each $y\in  D$. 
\end{thm}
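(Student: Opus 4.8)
The plan is to follow the template of Proposition \ref{P3.3c}, treating the extra $P$-slot and the Hopf--Galois translation map $u \mapsto u_+ \otimes_{A^{op}} u_-$ as the two genuinely new features. First I would record that $\underline\Omega_n(y)$ is well-defined on the relative tensor product $P \otimes_{A^{op}} U^{\otimes_{A^{op}} n}$: the balancing between $P$ and the first copy of $U$ is exactly the content of Lemma \ref{L5.5vq}, while the balancing between consecutive copies of $U$ over $A^{op} = A_R$ is handled as in Lemma \ref{L3.2}. In both checks one uses that $\Psi$ (resp.\ $\Omega$) is a measuring, together with the compatibility of $(\Psi,\psi)$ with the source and target maps and with cocommutativity of $C$.

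Next I would verify that $\underline\Omega_n(y)$ commutes with the simplicial structure in \eqref{facdeg6p}. The degeneracies $s_i$ only insert units $1 \in U$, so compatibility is immediate from the unit axiom $\Psi(x)(1_U) = \epsilon_C(x)1_{U'}$. For the faces $d_i$ with $1 \le i \le n-1$, adjacent copies of $U$ are multiplied, and commutation follows because $\Psi$ is an algebra measuring; the face $d_0$ involves $t_L \circ \epsilon_L$ and is handled by the compatibility squares \eqref{2.7} and \eqref{2.8}; the face $d_n$ involves the module action $p u^1$ and is handled by condition (2) of Definition \ref{D5.6}, i.e.\ that $(\Psi,\Omega)$ is a comodule measuring of the underlying $U$-modules. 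Throughout, cocommutativity of $C$ is used to realign the Sweedler factors exactly as in the proof of Proposition \ref{P3.3c}.

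The main work, and the expected obstacle, is the cyclic operator $t_n$. Here I would expand both composites $\underline\Omega_n(y) \circ t_n$ and $t'_n \circ \underline\Omega_n(y)$ from the formula in \eqref{facdeg6p}. Two identities drive the comparison. The translation-map factors $u^i_+, u^i_-$ are controlled by \eqref{pm6}, which says precisely that a measuring intertwines the Hopf--Galois translation maps, $x(u)_+ \otimes_{A'^{op}} x(u)_- = x_{(1)}(u_+) \otimes_{A'^{op}} x_{(2)}(u_-)$; the comodule factors $p_{(0)}, p_{(-1)}$ on the $P$-slot are controlled by condition (3) of Definition \ref{D5.6}, i.e.\ the square \eqref{cd523}, which identifies $(\Omega(y)(p))_{(-1)} \otimes_{A'} (\Omega(y)(p))_{(0)}$ with $\Psi(y_{(1)})(p_{(-1)}) \otimes_{A'} \Omega(y_{(0)})(p_{(0)})$. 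The last tensor slot of $t_n$ is the single product $u^n_- \cdots u^1_- p_{(-1)}$ in $U$, which the algebra-measuring property of $\Psi$ splits into factors $\Psi(y_{(\ast)})(u^i_-)$ and $\Psi(y_{(\ast)})(p_{(-1)})$. The crux is then bookkeeping: using coassociativity of the $D$-coaction and cocommutativity of $C$, one arranges the iterated comultiplication so that the $+$ and $-$ parts of each $u^i$ are fed by two consecutive Sweedler factors of the same element (as required by \eqref{pm6}) and so that the comodule factor attached to $p$ lines up with $q_{(-1)}$ on the other side, where $q = \Omega(y_{(0)})(p)$. I expect this index-matching --- reconciling the single coaction on $D$ with the many translation-map splittings --- to be the only delicate point; once the indices are aligned, the two composites coincide termwise and the claimed equality $\underline\Omega_n(y) \circ t_n = t'_n \circ \underline\Omega_n(y)$ follows, giving the morphism of cyclic modules and hence the induced maps on $HH_\bullet$ and $HC_\bullet$.
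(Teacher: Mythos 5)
Your proposal matches the paper's proof in both structure and substance: the paper likewise dismisses well-definedness and the simplicial identities as routine consequences of cocommutativity and Definition \ref{D5.6}, and concentrates on the cyclic operator, which it verifies by exactly the two identities you single out --- the intertwining \eqref{pm6} of the Hopf--Galois translation map (Lemma \ref{L6.1}) and the coaction compatibility \eqref{cd523} --- together with the cocommutativity-driven reindexing of Sweedler factors that you correctly flag as the delicate bookkeeping step. No gaps; this is the same argument.
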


\begin{proof}
From the fact that $C$ is cocommutative and the conditions in Definition \ref{D5.6}, it is clear that the morphisms $\underline\Omega_n(y)$ are well defined, as well as the fact that they commute with the face maps and degeneracies appearing in the cyclic modules $C_\bullet(\mathcal U;P)$ and $C_\bullet(\mathcal U';P')$ 
as in \eqref{facdeg6p}. To verify that the morphisms in \eqref{6.5fq} also commute with the cyclic operators, we note that for
$p\otimes_{A^{op}}u^1\otimes_{A^{op}}\otimes ...\otimes_{A^{op}}u^n\in C_n(\mathcal U;P)$ we have
\begin{equation*}
\begin{array}{ll}
y(t_n(p\otimes u^1\otimes ...\otimes u^n))=y(p_{(0)}u^1_+\otimes u^2_+\otimes \dots \otimes u^n_+\otimes u^n_-\dots u^1_-p_{(-1)})&\\
=y_{(0)}(p_{(0)})y_{(1)}(u^1_+)\otimes y_{(2)}(u^2_+)\otimes \dots \otimes y_{(n)}(u^n_+)\otimes y_{(n+1)}(u^n_-)\dots y_{(2n)}(u^1_-)y_{(2n+1)}(p_{(-1)})&\\
=y_{(0)}(p_{(0)})y_{(2)}(u^1_+)\otimes y_{(4)}(u^2_+)\otimes \dots \otimes y_{(2n)}(u^n_+)\otimes y_{(2n+1)}(u^n_-)\dots y_{(3)}(u^1_-)y_{(1)}(p_{(-1)})& \mbox{(since $C$ is cocommutative)}\\
=y_{(0)}(p)_{(0)}y_{(1)}(u^1_+)\otimes y_{(3)}(u^2_+)\otimes \dots \otimes y_{(2n-1)}(u^n_+)\otimes y_{(2n)}(u^n_-)\dots y_{(2)}(u^1_-)y_{(0)}(p)_{(-1)}& \mbox{(using \eqref{cd523})}\\
=y_{(0)}(p)_{(0)}y_{(1)}(u^1)_+\otimes y_{(2)}(u^2)_+\otimes \dots \otimes y_{(n)}(u^n)_+\otimes y_{(n)}(u^n)_-\dots y_{(1)}(u^1)_-y_{(0)}(p)_{(-1)}& \mbox{(using \eqref{pm6})}\\
\end{array}
\end{equation*} This proves the result.
\end{proof}

We now come to cyclic cohomology. For this, we recall that  from \cite[Theorem 1.1, Theorem 3.6]{KoKr} that the Hochschild cohomology groups
$HH^\bullet(\mathcal U;P)$ (resp. the cyclic cohomology groups $HC^\bullet(\mathcal U;P)$) of $\mathcal U$ with coefficients in the SAYD module $P$ are obtained from the cocyclic module $C^\bullet(\mathcal U;P):=({_\triangleright}U_{\triangleleft})^{\otimes_{A}\bullet}\otimes_AP$ with operators as follows
(where $\bar{u}:=u^1\otimes_{A}\otimes ...\otimes_{A}u^n$, $p\in P$)
\begin{equation}\label{6.7yg}
\begin{array}{rl}
\delta_i(\bar{u}\otimes_Ap)&=\left\{
\begin{array}{ll}
1\otimes_Au^1\otimes_A\dots \otimes_Au^n\otimes_Ap &\qquad \mbox{if $i=0$}\\
u^1\otimes_A\dots \otimes_A\Delta_L(u^i)\otimes_A\dots \otimes_Au^n\otimes_Ap&\qquad \mbox{if $1\leq i\leq n$}\\
u^1\otimes_A\dots \otimes_Au^n\otimes_Ap_{(-1)}\otimes_Ap_{(0)}&\qquad \mbox{if $i=n+1$}\\
\end{array} \right.\\
\delta_i(p)&=\left\{
\begin{array}{ll}
1\otimes_Ap&\qquad \mbox{if $j=0$}\\
p_{(-1)}\otimes_Ap_{(0)}&\qquad \mbox{if $j=1$}\\
\end{array}\right.\\
\sigma_i(\bar{u}\otimes_Ap)&= u^1\otimes_A\dots \otimes_A\epsilon_L(u^{i+1})\otimes_A\dots \otimes_Au^n\otimes_Ap \qquad 0\leq i\leq n-1\\
\tau_n(\bar{u}\otimes_Ap) &= u^1_{-(1)}u^2\otimes_A \dots \otimes_Au^1_{-(n-1)}u^n\otimes_Au^1_{-(n)}p_{(-1)}\otimes_Ap_{(0)}u^1_+\\
\end{array}
\end{equation}

We now have the following result.

\begin{thm}\label{P6.3h}
For each $y\in  D$, the family
\begin{equation}\label{v6.5fq} \overline\Omega^n(y):C^n(\mathcal U;P)\longrightarrow C^n(\mathcal U';P') \qquad u^1\otimes ...\otimes u^n\otimes p\mapsto y(u^1\otimes ...\otimes u^n\otimes p)=y_{(1)}(u^1)\otimes ... \otimes y_{(n)}(u^n)\otimes y_{(0)}(p)
\end{equation} for $n\geq 0$ gives a morphism of cocyclic modules. In particular, we have  induced morphisms
\begin{equation}\overline\Omega^\bullet_{hoc}(y):HH^\bullet(\mathcal U;P)\longrightarrow HH^\bullet(\mathcal U';P')\qquad \overline\Omega^\bullet_{cy}(y):HC^\bullet(\mathcal U;P)\longrightarrow HC^\bullet(\mathcal U';P')
\end{equation}
 on Hochschild and cyclic cohomologies for each $y\in  D$. 
\end{thm}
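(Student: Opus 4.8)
The plan is to treat this as the cohomological mirror of Proposition \ref{P6.2h}: for a fixed $y\in D$ I will check that the family $\{\overline\Omega^n(y)\}_{n\geq 0}$ is compatible with each of the cofaces, codegeneracies and the cocyclic operator of the cocyclic modules $C^\bullet(\mathcal U;P)$ and $C^\bullet(\mathcal U';P')$ described in \eqref{6.7yg}. Before anything else I would record that each $\overline\Omega^n(y)$ is well defined on the balanced tensor product $({}_\triangleright U_\triangleleft)^{\otimes_A n}\otimes_A P$. This is the iterated, slot-by-slot version of well-definedness checks already carried out: the balancing between two consecutive copies of $U$ is handled exactly as in Lemma \ref{L2.4}, using the compatibility of $\Psi$ with $s_L$ and $t_L$ recorded in \eqref{2.7}, while the balancing of the last copy of $U$ against $P$ is precisely the content of Lemma \ref{L5.5vq}(b), where the identity $ap=pt_L(a)$ from \eqref{apap} together with condition (2) of Definition \ref{D5.6} is used.

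Next I would dispatch the cofaces and codegeneracies. For the interior cofaces $\delta_i$ with $1\leq i\leq n$, which insert $\Delta_L$ into the $i$-th slot, the verification is verbatim that of Proposition \ref{P3.1b}, relying on the compatibility of $\Psi$ with $\Delta_L$ from \eqref{2.8}; the codegeneracies $\sigma_i$ are handled in the same way using the compatibility of $\Psi$ with $\epsilon_L$ in \eqref{2.8}, and $\delta_0$ uses only the counit normalization $\Psi(x)(1_U)=\epsilon_C(x)1_{U'}$ of the measuring. The one genuinely new coface is $\delta_{n+1}$, which appends the comodule coaction $p\mapsto p_{(-1)}\otimes p_{(0)}$; here commutation with $\overline\Omega$ is exactly the statement that $\Delta'_{P'}\circ\Omega(y)=y\circ\Delta_P$, i.e.\ condition (3) of Definition \ref{D5.6} as recorded in the diagram \eqref{cd523}. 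Thus the cofaces and codegeneracies cost nothing beyond the hypotheses already assembled.

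The main work, as in Proposition \ref{P6.2h}, is the cocyclic operator $\tau_n$, whose formula in \eqref{6.7yg} simultaneously involves the translation maps $u^1_+,u^1_-$, the several Sweedler components of $u^1_-$, the coaction $p_{(-1)},p_{(0)}$ and the right action $p_{(0)}u^1_+$. I would expand $\overline\Omega^n(y)(\tau_n(\bar u\otimes_A p))$ and then push the single element $y$ through each of these operations in turn: equation \eqref{pm6} (a consequence of Lemma \ref{L6.1}) lets me replace the factors of the form $y(u^1_+)\otimes y(u^1_-)$ by $y(u^1)_+\otimes y(u^1)_-$; the $\Delta_L$-compatibility in \eqref{2.8} distributes $y$ over the components of $u^1_-$; condition (2) of Definition \ref{D5.6} moves $y$ across the action $p_{(0)}u^1_+$; and the coaction compatibility \eqref{cd523} converts $y(p_{(0)})\otimes y(p_{(-1)})$ into $y(p)_{(0)}\otimes y(p)_{(-1)}$. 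The delicate point, and the step I expect to be the main obstacle, is the Sweedler bookkeeping: the single $y$ must be coproduct-split into enough tensor factors to feed all of $u^1_-,u^2,\dots,u^n,p_{(-1)},p_{(0)},u^1_+$ at once, and the indices produced by expanding the left-hand side do not a priori line up with those needed on the right. Cocommutativity of $C$ is precisely what permits the required reindexing, exactly as it is used in the displayed computation of Proposition \ref{P6.2h}. Once the indices are matched the two sides agree, giving $\overline\Omega^n(y)\circ\tau_n=\tau'_n\circ\overline\Omega^n(y)$ and completing the proof that $\{\overline\Omega^n(y)\}_{n\geq 0}$ is a morphism of cocyclic modules, whence the induced maps on $HH^\bullet$ and $HC^\bullet$.
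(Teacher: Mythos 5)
Your proposal is correct and follows essentially the same route as the paper's proof: you handle $\delta_{n+1}$ via \eqref{cd523} and the remaining cofaces and codegeneracies as in Proposition \ref{P3.1b}, and your treatment of $\tau_n$ --- pushing $y$ through the translation maps via \eqref{pm6}, distributing it over the Sweedler components of $u^1_-$ via the $\Delta_L$-compatibility, splitting it across $p_{(0)}u^1_+$ via condition (2) of Definition \ref{D5.6}, and converting $y(p_{(0)})\otimes y(p_{(-1)})$ via \eqref{cd523}, with cocommutativity doing the index reshuffling --- is precisely the displayed computation in the paper. The only (harmless) difference is that you spell out the well-definedness of $\overline\Omega^n(y)$ via Lemma \ref{L2.4} and Lemma \ref{L5.5vq}(b), which the paper dismisses as clear.
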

\begin{proof} It is clear that the morphisms in \eqref{v6.5fq} are well-defined. 
For $y\in D$ and $i=n+1$ in \eqref{6.7yg}, we note that
\begin{equation}
\begin{array}{lll}
y(\delta_{n+1}(u^1\otimes\dots \otimes u^n\otimes p))&=y_{(1)}(u^1)\otimes \dots y_{(n)}(u^n)\otimes y_{(n+1)}(p_{(-1)})\otimes y_{(0)}(p_{(0)})&\\
&=y_{(2)}(u^1)\otimes \dots y_{(n+1)}(u^n)\otimes y_{(1)}(p_{(-1)})\otimes y_{(0)}(p_{(0)})& \mbox{(since $C$ is cocommutative)}\\
&=y_{(1)}(u^1)\otimes \dots y_{(n)}(u^n)\otimes (y_{(0)}(p))_{(-1)}\otimes y_{(0)}(p)_{(0)}&\mbox{(using \eqref{cd523})}\\
\end{array}
\end{equation} Similarly, we may verify that the morphisms in \eqref{v6.5fq} commute with the other coface and codegeneracy maps appearing in \eqref{6.7yg}. To show that they also commute with the cocyclic operators appearing in \eqref{6.7yg}, we note that for $u^1\otimes ...\otimes u^n\otimes p\in C^n(\mathcal U;P)$ and $
y\in D$, we have
\begin{equation*}
\begin{array}{ll}
y(\tau_n(u^1\otimes ...\otimes u^n\otimes p))=y( u^1_{-(1)}u^2\otimes_A \dots \otimes_Au^1_{-(n-1)}u^n\otimes_Au^1_{-(n)}p_{(-1)}\otimes_Ap_{(0)}u^1_+)&\\
= y_{(1)}(u^1_{-(1)})y_{(2)}(u^2)\otimes_A \dots \otimes_Ay_{(2n-3)}(u^1_{-(n-1)})y_{(2n-2)}(u^n)\otimes_Ay_{(2n-1)}(u^1_{-(n)})y_{(2n)}(p_{(-1)})\otimes_Ay_{(0)}(p_{(0)}u^1_+)&\\
= y_{(1)}(u^1_{-(1)})y_{(n+1)}(u^2)\otimes_A \dots \otimes_Ay_{(n-1)}(u^1_{-(n-1)})y_{(2n-1)}(u^n)\otimes_Ay_{(n)}(u^1_{-(n)})y_{(2n)}(p_{(-1)})\otimes_Ay_{(0)}(p_{(0)}u^1_+)&\\
= y_{(1)}(u^1_-)_{(1)}y_{(2)}(u^2)\otimes_A \dots \otimes_Ay_{(1)}(u^1_-)_{(n-1)}y_{(n)}(u^n)\otimes_Ay_{(1)}(u^1_-)_{(n)}y_{(n+1)}(p_{(-1)})\otimes_Ay_{(0)}(p_{(0)}u^1_+)&\\
= y_{(2)}(u^1_-)_{(1)}y_{(3)}(u^2)\otimes_A \dots \otimes_Ay_{(2)}(u^1_-)_{(n-1)}y_{(n+1)}(u^n)\otimes_Ay_{(2)}(u^1_-)_{(n)}y_{(n+2)}(p_{(-1)})\otimes_Ay_{(0)}(p_{(0)})y_{(1)}(u^1_+)&\\
= y_{(1)}(u^1)_{-(1)}y_{(2)}(u^2)\otimes_A \dots \otimes_Ay_{(1)}(u^1)_{-(n-1)}y_{(n)}(u^n)\otimes_Ay_{(1)}(u^1)_{-(n)}y_{(n+1)}(p_{(-1)})\otimes_Ay_{(0)}(p_{(0)})y_{(1)}(u^1)_+&\mbox{(using \eqref{pm6})}\\
= y_{(1)}(u^1)_{-(1)}y_{(2)}(u^2)\otimes_A \dots \otimes_Ay_{(1)}(u^1)_{-(n-1)}y_{(n)}(u^n)\otimes_Ay_{(1)}(u^1)_{-(n)}y_{(0)}(p)_{(-1)}\otimes_Ay_{(0)}(p)_{(0)}y_{(1)}(u^1)_+&\mbox{(using \eqref{cd523})}\\
\end{array}
\end{equation*} This proves the result.
\end{proof}

Finally, we recall from \cite[$\S$ 4.3]{KoKr} that there are Hopf-Galois isomorphisms relating the modules $C_\bullet(\mathcal U;P)$ and $C^\bullet(\mathcal
U;P)$
\begin{equation}\label{hgal6t}
\xi_n(\mathcal U;P):C_n(\mathcal U;P)\overset{\cong}{\longrightarrow}C^n(\mathcal U;P)\qquad p\otimes u^1\otimes \dots \otimes u^n\mapsto u^1_{(1)}
\otimes u^1_{(2)}u^2_{(1)}\otimes \dots \otimes u^1_{(n)}u^2_{(n-1)}\dots u^{n-1}_{(2)}u^n\otimes p
\end{equation} We will conclude this section by showing that the morphisms induced by comodule measurings of SAYD modules are compatible with the Hopf-Galois isomorphisms in
\eqref{hgal6t}.

\begin{Thm}\label{T6.3} Let  $\mathcal U=(U,A_L,s_L,t_L,\Delta_L,\epsilon_L,S)$, 
and $\mathcal U'=(U',A_L',s'_L,t'_L,\Delta'_L,\epsilon'_L,S')$  be Hopf algebroids over $k$.  Let $P$ and $P'$ be SAYD modules over $\mathcal U$ and $\mathcal U'$ respectively. Let $(\Psi,\psi):C\longrightarrow V(\mathcal U,\mathcal U')$ be a cocommutative measuring and let $\Omega:D\longrightarrow Vect_k(P,P')$ be a $(C,\Psi,\psi)$-comodule measuring from $P$ to $P'$. Then, for each $y\in D$, the following diagram commutes
\begin{equation}
\begin{CD}
C_n(\mathcal U;P) @>\xi_n(\mathcal U;P)>> C^n(\mathcal U;P)\\
@V\underline{\Omega}_n(y)VV @VV\overline{\Omega}^n(y)V\\
C_n(\mathcal U';P') @>\xi_n(\mathcal U';P')>> C^n(\mathcal U';P')\\
\end{CD}
\end{equation}

\end{Thm}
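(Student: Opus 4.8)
The plan is to adapt, essentially verbatim, the computation used to prove Proposition \ref{P3.4x}, carrying the SAYD-coefficient $p$ along as a passive tensor factor. The key preliminary observation is that the Hopf--Galois map $\xi_n$ in \eqref{hgal6t} leaves $p$ untouched: it involves only the left coproducts $u^j_{(k)}$ and never the coaction $p_{(-1)}\otimes p_{(0)}$ on $P$. Consequently the compatibility condition \eqref{cd523} between the comodule measuring and $\Delta_P$, which was decisive in Propositions \ref{P6.2h} and \ref{P6.3h}, plays no role here. The only inputs needed are that $\Psi$ is a coalgebra measuring (so it splits both products and coproducts), that $C$ is cocommutative, and the compatibility of $x$ with $\Delta_L$ recorded in the right-hand square of \eqref{2.8}.

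Concretely, I would evaluate the upper-right composite $\overline\Omega^n(y)\circ\xi_n(\mathcal U;P)$ on a representative $p\otimes u^1\otimes\cdots\otimes u^n$. Applying $\xi_n(\mathcal U;P)$ first produces the staircase expression whose $k$-th slot is the product $u^1_{(k)}u^2_{(k-1)}\cdots u^k$, with $p$ carried unchanged to the final slot. Writing $N:=n(n+1)/2$ for the total number of $U$-factors so produced, applying $\overline\Omega^n(y)$ then distributes the Sweedler components $y_{(1)},\dots,y_{(N)}\in C$ over these individual factors via the measuring property of $\Psi$, while sending $p$ to $y_{(0)}(p)$ through the comodule component $y_{(0)}\in D$.

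The core of the argument is the re-indexing step. Using cocommutativity of $C$ I would permute the components $y_{(1)},\dots,y_{(N)}$ so that, for each fixed $j$, the ones acting on the factors $u^j_{(1)},u^j_{(2)},\dots$ carry consecutive indices; the right-hand square of \eqref{2.8}, iterated, then lets me recognize the collection $y_{(i)}(u^j_{(1)}),\,y_{(i+1)}(u^j_{(2)}),\dots$ as the $\Delta'_L$-components $\bigl(y_{(i')}(u^j)\bigr)_{(1)},\bigl(y_{(i')}(u^j)\bigr)_{(2)},\dots$ of the single element $y_{(i')}(u^j)$ of $U'$. After this regrouping the expression is exactly $\xi_n(\mathcal U';P')$ applied to $y_{(0)}(p)\otimes y_{(1)}(u^1)\otimes\cdots\otimes y_{(n)}(u^n)$, i.e.\ to $\underline\Omega_n(y)(p\otimes u^1\otimes\cdots\otimes u^n)$, which establishes $\overline\Omega^n(y)\circ\xi_n(\mathcal U;P)=\xi_n(\mathcal U';P')\circ\underline\Omega_n(y)$ and hence commutativity of the diagram.

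I expect the only real difficulty to be the combinatorial bookkeeping of the $N=n(n+1)/2$ Sweedler indices, namely verifying that the cocommutativity permutation lines up precisely with the two-step coproduct structure (``$C$-coproduct followed by $\Delta'_L$'' versus ``$\Delta_L$ followed by $C$-coproduct''). This is the same obstacle already resolved in Proposition \ref{P3.4x}, so the coefficient case contributes no genuinely new difficulty beyond tracking the extra slot $y_{(0)}(p)$, whose index $0$ (the comodule component) is never permuted with the $C$-components and hence matches trivially on both sides. Finally, well-definedness of $\underline\Omega_n(y)$ and $\overline\Omega^n(y)$ over the balanced tensor products $\otimes_{A^{op}}$ and $\otimes_A$ has already been secured in Propositions \ref{P6.2h} and \ref{P6.3h}, so the computation may be carried out on representatives without a separate well-definedness check.
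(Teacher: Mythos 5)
Your proposal is correct and takes essentially the same approach as the paper's proof: the paper likewise evaluates $\overline{\Omega}^n(y)\circ\xi_n(\mathcal U;P)$ on a representative, distributes the Sweedler components of $y$ over the staircase factors via the measuring property, permutes them by cocommutativity of $C$, regroups them as $\Delta'_L$-components using the compatibility in \eqref{2.8}, and carries $p$ passively to $y_{(0)}(p)$, with the condition \eqref{cd523} indeed playing no role. The only cosmetic difference is bookkeeping: the paper sets $N:=n(n-1)/2$ so that the total number of $C$-components is $N+n=n(n+1)/2$, which agrees with your count.
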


\begin{proof} We set $N:=n(n-1)/2$. For $y\in D$ and $p\otimes u^1\otimes \dots \otimes u^n\in C_n(\mathcal U;P)$, we see that
\begin{equation}
\begin{array}{ll}
\overline{\Omega}^n(y)(\xi_n(\mathcal U;P)(p\otimes u^1\otimes \dots \otimes u^n)) &\\
=\overline{\Omega}^n(y)(u^1_{(1)}
\otimes u^1_{(2)}u^2_{(1)}\otimes \dots \otimes u^1_{(n)}u^2_{(n-1)}\dots u^{n-1}_{(2)}u^n\otimes p)&\\
=y_{(1)}(u^1_{(1)})
\otimes y_{(2)}(u^1_{(2)})y_{(3)}(u^2_{(1)})\otimes \dots \otimes y_{(N+1)}(u^1_{(n)})y_{(N+2)}(u^2_{(n-1)})\dots y_{(N+n-1)}(u^{n-1}_{(2)})y_{(N+n)}(u^n)\otimes y_{(0)}(p)&\\
=y_{(1)}(u^1_{(1)})
\otimes y_{(2)}(u^1_{(2)})y_{(n+1)}(u^2_{(1)})\otimes \dots \otimes y_{(n)}(u^1_{(n)})y_{(2n-1)}(u^2_{(n-1)})\dots y_{(N+n-1)}(u^{n-1}_{(2)})y_{(N+n)}(u^n)\otimes y_{(0)}(p)&\\
=y_{(1)}(u^1)_{(1)}
\otimes y_{(1)}(u^1)_{(2)}y_{(2)}(u^2)_{(1)}\otimes \dots \otimes y_{(1)}(u^1)_{(n)}y_{(2)}(u^2)_{(n-1)}\dots y_{(n-1)}(u^{n-1})_{(2)}y_{(n)}(u^n)\otimes y_{(0)}(p)&\\
=\xi_n(\mathcal U;P)(\underline{\Omega}_n(y)(p\otimes u^1\otimes \dots \otimes u^n))&\\
\end{array}
\end{equation}

\end{proof}

\section{Measurings of Lie-Rinehart algebras and morphisms in   homology}

Throughout this section, we assume that the ground field $k$ contains $\mathbb Q$. By definition, a Lie-Rinehart algebra over $k$ (see, for instance, \cite{H1}, \cite{H2}, \cite{Ri})  consists of a pair  $(R,\mathfrak L)$ such that

\smallskip
(a) $R$ is a commutative $k$-algebra and $\mathfrak L$ is a $R$-module

\smallskip
(b) $\mathfrak L$ carries the structure of a  Lie algebra over $k$ and is equipped with a morphism $\mathfrak L\longrightarrow Der_k(R)$, $Z\mapsto \{r\mapsto Z(r)\}$ of Lie algebras satisfying
\begin{equation}
(rZ)(r')=r(Z(r'))\qquad [Z,rZ']=r[Z,Z']+Z(r)Z'\qquad r,r'\in R,\textrm{ }Z,Z'\in \mathfrak L
\end{equation} A (right) connection on $(R,\mathfrak L)$ consists of a linear map $\nabla:\mathfrak L\otimes R \longrightarrow R$ that satisfies (see \cite{H2}, \cite{KoP})
\begin{equation}\label{72di}
\nabla(Z\otimes r'r)=r'\nabla(Z\otimes r)-Z(r')r=\nabla(r'Z\otimes r)\qquad Z\in \mathfrak L, \textrm{ }r,r'\in R
\end{equation} Since $\nabla$ may also be written as a map $\nabla:\mathfrak L\longrightarrow Vect_k(R,R)$, $Z\mapsto \nabla_Z$, the relation in \eqref{72di} may also be expressed as
$\nabla_Z(r'r)=r'\nabla_Z(r)-Z(r')r=\nabla_{r'Z}(r)$. The connection is said to be flat if $[\nabla_Z,\nabla_{Z'}]=\nabla_{[Z',Z]}$ for any $Z$, $Z'\in \mathfrak L$. In this paper, we will only consider Lie-Rinehart algebras $(R,\mathfrak L)$ equipped with a flat connection $\nabla$ and such that $\mathfrak L$ is projective as a 
$R$-module. For more on Lie-Rinehart algebras, we refer the reader, for instance, to \cite{H1}, \cite{H2}, \cite{KoKr2}, \cite{MM}, \cite{Xu0}, \cite{Xu}.

\begin{defn}\label{llD7.1}
Let $(R,\mathfrak L,\nabla)$ and $(R',\mathfrak L',\nabla')$ be Lie-Rinehart algebras over $k$. Let $C$ be a cocommutative 
$k$-coalgebra. Then, a  $C$-measuring $(\Psi,\psi)$ from  $(R,\mathfrak L,\nabla)$ to $(R',\mathfrak L',\nabla')$ consists of a pair of linear maps
\begin{equation}
\begin{array}{c}
\Psi: C\longrightarrow Vect_k(\mathfrak L,\mathfrak L')\qquad x\mapsto \Psi(x)=\{Z\mapsto x(Z)\} \\
\psi:C\longrightarrow Vect_k(R,R') \qquad x\mapsto \psi(x)=\{r\mapsto x(r)\}\\
\end{array}
\end{equation} for $x\in C$, $r\in R$ and $Z\in \mathfrak L$, such that 

\smallskip
(a) $\psi:C\longrightarrow Vect_k(R,R')$ is a measuring of algebras

\smallskip
(b) For $Z_1$, $Z_2\in \mathfrak L$ and $x\in C$, we have
$
x([Z_1,Z_2])=[x_{(1)}(Z_1),x_{(2)}(Z_2)]
$.

\smallskip
(c) For $x\in C$, $r\in R$ and $Z\in \mathfrak L$, we have 
\begin{equation}\label{74ds}
x(Z(r))=(x_{(1)}(Z))(x_{(2)}(r))\qquad x(\nabla_Z(r))=\nabla_{x_{(1)}(Z)}(x_{(2)}(r))\qquad x(rZ)=x_{(1)}(r)x_{(2)}(Z)
\end{equation}
\end{defn}

Let $(R,\mathfrak L,\nabla)$ be a Lie-Rinehart algebra. Then, its homology is computed by taking the exterior algebra $\wedge^\bullet_R\mathfrak L$ along with the differential
$\partial: \wedge^n_R\mathfrak L\longrightarrow \wedge^{n-1}_R\mathfrak L$ given by setting (see \cite[$\S$ 3.3.3]{KoP})
\begin{equation}\label{lrhom}
\begin{array}{ll}
\partial(Z_1\wedge ... \wedge Z_n)&=\sum_{i=1}^n(-1)^{i+1}\nabla_{Z_i}(1_R)Z_1\wedge ... \wedge \hat{Z}_i\wedge ...\wedge Z_n\\
&\quad + \sum_{i<j} (-1)^{i+j}[Z_i,Z_j]\wedge Z_1\wedge ...\wedge \hat{Z}_i\wedge...\wedge \hat{Z}_j\wedge ...\wedge Z_n\\
\end{array}
\end{equation} for   $Z_1,...,Z_n\in \mathfrak L$. The homology groups of this complex will be denoted by $H_\bullet(R,\mathfrak L,\nabla)$. 

\begin{thm}\label{skP7.2}
Let $C$ be a cocommutative coalgebra and let $(\Psi,\psi)$ be a $C$-measuring of Lie-Rinehart algebras from  $(R,\mathfrak L,\nabla)$ to $(R',\mathfrak L',\nabla')$. Then, for each $x\in C$, the family of maps (for $n\geq 0$)
\begin{equation}\label{76vu}
\wedge^n_R\mathfrak L\longrightarrow \wedge^n_{R'}\mathfrak L' \qquad Z_1\wedge ... \wedge Z_n\mapsto x_{(1)}(Z_1)\wedge ... \wedge x_{(n)}(Z_n)
\end{equation} gives a morphism of complexes from $(\wedge^\bullet_R\mathfrak L,\partial)$ to $(\wedge^\bullet_{R'}\mathfrak L',\partial')$. Accordingly, for each $x\in C$, we have  induced morphisms $\underline{\Psi}_\bullet^{Lie}(x):H_\bullet(R,\mathfrak L,\nabla)
\longrightarrow H_\bullet(R',\mathfrak L',\nabla')$ on homology groups.
\end{thm}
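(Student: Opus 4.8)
The plan is to first check that the maps in \eqref{76vu} are well defined on the exterior powers, then to verify directly that they commute with the differentials $\partial$ and $\partial'$; the induced maps on homology follow formally. Throughout I write $\underline{\Psi}^{Lie}_n(x):\wedge^n_R\mathfrak L\longrightarrow \wedge^n_{R'}\mathfrak L'$ for the map $Z_1\wedge\cdots\wedge Z_n\mapsto x_{(1)}(Z_1)\wedge\cdots\wedge x_{(n)}(Z_n)$.

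For well-definedness, I would first lift the assignment to the level of $R$-balanced tensors $\mathfrak L^{\otimes_R n}\longrightarrow(\mathfrak L')^{\otimes_{R'}n}$. That this respects the balancing over $R$ is exactly the content of the third relation in \eqref{74ds}, namely $x(rZ)=x_{(1)}(r)x_{(2)}(Z)$, together with the fact that $\psi$ is a measuring; here cocommutativity of $C$ is used to move the scalar factor $x_{(j)}(r)$ across a tensor slot, since in $\mathfrak L'\otimes_{R'}\mathfrak L'$ one has $r'Y_1\otimes Y_2=Y_1\otimes r'Y_2$. The descent to the antisymmetric quotient is again a consequence of cocommutativity: the iterated coproduct of $x$ is invariant under the $S_n$-action permuting its components, so $\underline{\Psi}^{Lie}_n(x)$ is $S_n$-equivariant up to sign and therefore passes to $\wedge^n$.

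Next I would verify the chain-map identity $\underline{\Psi}^{Lie}_{n-1}(x)\circ\partial=\partial'\circ\underline{\Psi}^{Lie}_n(x)$ by expanding both sides along the two types of summands in \eqref{lrhom} and matching them. For the bracket summands, applying $\partial'$ to $x_{(1)}(Z_1)\wedge\cdots\wedge x_{(n)}(Z_n)$ produces terms with coefficient $[x_{(i)}(Z_i),x_{(j)}(Z_j)]$, while applying $\underline{\Psi}^{Lie}_{n-1}(x)$ after $\partial$ produces $x_{(1)}([Z_i,Z_j])$ in the leading slot, which condition (b) rewrites as $[x_{(1)}(Z_i),x_{(2)}(Z_j)]$. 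For the connection summands, $\partial'$ yields the scalar $\nabla'_{x_{(i)}(Z_i)}(1_{R'})$, whereas the other side yields $x_{(1)}(\nabla_{Z_i}(1_R))$; here the second relation in \eqref{74ds} combined with $\psi(x)(1_R)=\epsilon_C(x)1_{R'}$ and the counit axiom collapses $x(\nabla_{Z_i}(1_R))$ to $\nabla'_{x(Z_i)}(1_{R'})$, giving the required scalar. In each case the two expressions differ only in which Sweedler component of $x$ is applied in which slot, and cocommutativity identifies them.

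The main obstacle is the Sweedler-index bookkeeping. Both conditions (b) and (c) introduce an extra splitting of the relevant component of $x$, so after rewriting, the total coproduct of $x$ is distributed across the slots in an order that does not literally coincide with the order produced by the opposite composite; the crux of the argument is to observe that all these distributions agree once one invokes the symmetry of the iterated coproduct of the cocommutative coalgebra $C$. I note that neither flatness of $\nabla$ nor projectivity of $\mathfrak L$ is needed for this verification, these being standing hypotheses only for the good behaviour of the homology $H_\bullet(R,\mathfrak L,\nabla)$ itself.
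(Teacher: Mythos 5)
Your proposal is correct and takes essentially the same route as the paper's proof: a direct term-by-term check that the maps intertwine $\partial$ and $\partial'$, using conditions (b) and (c) of Definition \ref{llD7.1} (with $x(\nabla_{Z}(1_R))$ collapsing to $\nabla'_{x(Z)}(1_{R'})$ via the counit) and cocommutativity of $C$ to reconcile the Sweedler indices. The only difference is that you spell out the well-definedness on $\wedge^n_R\mathfrak L$ (balancing over $R$ via the third relation in \eqref{74ds}, then descent to the alternating quotient by $S_n$-invariance of the iterated coproduct), a step the paper dismisses as evident, and your closing remark that neither flatness of $\nabla$ nor projectivity of $\mathfrak L$ enters this verification is likewise accurate.
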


\begin{proof}
Since $C$ is cocommutative, it is evident that the maps in \eqref{76vu} are well-defined for each $x\in C$. We now verify that
\begin{equation*}
\begin{array}{l}
x(\partial(Z_1\wedge ... \wedge Z_n))\\
=\sum_{i=1}^n(-1)^{i+1}x_{(1)}(\nabla_{Z_i}(1_R)Z_1)\wedge ... \wedge x_{(i)}(\hat{Z}_i)\wedge ...\wedge x_{(n-1)}(Z_n)\\
\textrm{ }+ \sum_{i<j} (-1)^{i+j}x_{(1)}([Z_i,Z_j])\wedge x_{(2)}(Z_1)\wedge ...\wedge x_{(i+1)}(\hat{Z}_i)\wedge...\wedge x_{(j)}(\hat{Z}_j)\wedge ...\wedge x_{(n-1)}(Z_n)\\
=\sum_{i=1}^n(-1)^{i+1}x_{(1)}(\nabla_{Z_i}(1_R))x_{(2)}(Z_1)\wedge ... \wedge x_{(i+1)}(\hat{Z}_i)\wedge ...\wedge x_{(n)}(Z_n) \\
 \textrm{ }+ \sum_{i<j} (-1)^{i+j}x_{(1)}([Z_i,Z_j])\wedge x_{(2)}(Z_1)\wedge ...\wedge x_{(i+1)}(\hat{Z}_i)\wedge...\wedge x_{(j)}(\hat{Z}_j)\wedge ...\wedge x_{(n-1)}(Z_n)\\
=\sum_{i=1}^n(-1)^{i+1} (\nabla_{x_{(1)}(Z_i)}(1_{R'}))x_{(2)}(Z_1)\wedge ... \wedge x_{(i+1)}(\hat{Z}_i)\wedge ...\wedge x_{(n)}(Z_n)\\ \textrm{ } + \sum_{i<j} (-1)^{i+j}[x_{(1)}(Z_i),x_{(2)}(Z_j)]\wedge x_{(3)}(Z_1)\wedge ...\wedge x_{(i+2)}(\hat{Z}_i)\wedge...\wedge x_{(j+1)}(\hat{Z}_j)\wedge ...\wedge x_{(n)}(Z_n)\\
=\sum_{i=1}^n(-1)^{i+1} (\nabla_{x_{(i)}(Z_i)}(1_{R'}))x_{(1)}(Z_1)\wedge ... \wedge x_{(i)}(\hat{Z}_i)\wedge ...\wedge x_{(n)}(Z_n)\\ \textrm{ } + \sum_{i<j} (-1)^{i+j}[x_{(i)}(Z_i),x_{(j)}(Z_j)]\wedge x_{(1)}(Z_1)\wedge ...\wedge x_{(i)}(\hat{Z}_i)\wedge...\wedge x_{(j)}(\hat{Z}_j)\wedge ...\wedge x_{(n)}(Z_n)\\
\end{array}
\end{equation*} This proves the result.
\end{proof}

If $(R,\mathfrak L,\nabla)$ is a Lie-Rinehart algebra, its universal enveloping algebra $\mathcal V(R,\mathfrak L,\nabla)$ is constructed in the following steps (see, for instance, 
\cite{KoP}, \cite{MM}, \cite{Xu}): the direct sum $R\oplus \mathfrak L$ is made into a $k$-Lie-algebra by setting
\begin{equation}\label{pluslie}
[(r_1,Z_1),(r_2,Z_2)]:=((Z_1(r_2)-Z_2(r_1),[Z_1,Z_2])
\end{equation} Accordingly, one can form the universal enveloping algebra $\mathcal U(R\oplus\mathfrak L)$ and let $\bar{\mathcal U}(R\oplus \mathfrak L)$ be its subalgebra generated by elements in the image of $R\oplus\mathfrak L$ in $\mathcal U(R\oplus\mathfrak L)$. For any
$(r,Z)\in R\oplus \mathfrak L$, let $\overline{(r,Z)}$ be its image in $\bar{\mathcal U}(R\oplus \mathfrak L)$. Then, $\mathcal V(R,\mathfrak L,\nabla)$  is formed by taking the quotient of $\bar{\mathcal U}(R\oplus \mathfrak L)$ over the two sided ideal generated by elements of the form
\begin{equation}\label{twos7}
\overline{(r_1r_2,r_1Z_2)}-\overline{r_1}\otimes \overline{(r_2,Z_2)}
\end{equation} for $r_1\in R$, $(r_2,Z_2)\in R\oplus \mathfrak L$. The $\otimes$ symbol is used in \eqref{twos7} because the product in $\bar{\mathcal U}(R\oplus \mathfrak L)$ is induced from the product in the tensor algebra over the space $R\oplus \mathfrak L$. 

\begin{lem}\label{L7.3px}
Let $C$ be a cocommutative coalgebra and let $(\Psi,\psi)$ be a $C$-measuring of Lie-Rinehart algebras from  $(R,\mathfrak L,\nabla)$ to $(R',\mathfrak L',\nabla')$. Then, $(\Psi,\psi)$ induces a measuring $\mathcal V(\Psi,\psi):C\longrightarrow Vect_k(\mathcal V(R,\mathfrak L,\nabla),\mathcal V(R',\mathfrak L',\nabla'))$ of universal enveloping algebras.
\end{lem}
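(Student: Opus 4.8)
The plan is to assemble the pair $(\Psi,\psi)$ into a single measuring of the $k$-Lie algebra $R\oplus\mathfrak L$, extend it freely to the tensor algebra, and then check that it descends through each quotient appearing in the construction of $\mathcal V(R,\mathfrak L,\nabla)$.

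First I would define, for each $x\in C$, the linear map
$$\alpha(x):R\oplus\mathfrak L\longrightarrow R'\oplus\mathfrak L',\qquad (r,Z)\mapsto (\psi(x)(r),\Psi(x)(Z))=(x(r),x(Z)),$$
giving a linear map $\alpha:C\longrightarrow Vect_k(R\oplus\mathfrak L,R'\oplus\mathfrak L')$. The first key step is to verify that $\alpha$ is a measuring of $k$-Lie algebras for the brackets in \eqref{pluslie}, i.e. that $\alpha(x)([a,b])=[\alpha(x_{(1)})(a),\alpha(x_{(2)})(b)]$ for $a,b\in R\oplus\mathfrak L$. The $\mathfrak L'$-component of this identity is exactly condition (b) of Definition \ref{llD7.1}, while the $R'$-component unwinds, using the first and third relations of \eqref{74ds}, to the requirement $x(Z_1(r_2))-x(Z_2(r_1))=x_{(1)}(Z_1)(x_{(2)}(r_2))-x_{(2)}(Z_2)(x_{(1)}(r_1))$; here the second summand requires interchanging the two Sweedler legs, which is permitted precisely because $C$ is cocommutative. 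This cocommutativity-driven symmetrisation is the main point of the argument.

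Next I would pass to tensor algebras. For any linear map $\alpha$ of generating spaces there is a ``cofree'' measuring
$$T(\alpha)(x)(v_1\otimes\dots\otimes v_n):=\alpha(x_{(1)})(v_1)\otimes\dots\otimes\alpha(x_{(n)})(v_n),$$
and coassociativity of $\Delta_C$ shows at once that $T(\alpha):C\longrightarrow Vect_k(T(R\oplus\mathfrak L),T(R'\oplus\mathfrak L'))$ satisfies the measuring axioms \eqref{2.5ft}. Now I would descend $T(\alpha)$ along the quotients defining $\mathcal U(R\oplus\mathfrak L)$ and then $\mathcal V(R,\mathfrak L,\nabla)$. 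The crucial observation is that if a measuring sends each generator of a two-sided ideal $I$ into the corresponding ideal $I'$, then by the product rule $T(\alpha)(x)(w\,g\,w')=T(\alpha)(x_{(1)})(w)\,T(\alpha)(x_{(2)})(g)\,T(\alpha)(x_{(3)})(w')$ it sends all of $I$ into $I'$, so $T(\alpha)$ descends to the quotient. For the Lie ideal, applying $T(\alpha)(x)$ to $a\otimes b-b\otimes a-[a,b]$ and reducing modulo the relations of $\mathcal U(R'\oplus\mathfrak L')$ produces $\sum[\alpha(x_{(1)})(a),\alpha(x_{(2)})(b)]-\alpha(x)([a,b])$, which vanishes by the Lie-measuring property established above (once more using cocommutativity to match the antisymmetrised generator). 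For the ideal of \eqref{twos7}, applying $T(\alpha)(x)$ to $\overline{(r_1r_2,r_1Z_2)}-\overline{r_1}\otimes\overline{(r_2,Z_2)}$ and using the algebra-measuring property of $\psi$ together with the third relation of \eqref{74ds} yields $\sum\big(\overline{(x_{(1)}(r_1)x_{(2)}(r_2),\,x_{(1)}(r_1)x_{(2)}(Z_2))}-\overline{x_{(1)}(r_1)}\otimes\overline{(x_{(2)}(r_2),x_{(2)}(Z_2))}\big)$, each summand of which is again a generator of \eqref{twos7} for $\mathcal V(R',\mathfrak L',\nabla')$.

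Finally I would note that $T(\alpha)(x)$ carries degree-one generators $\overline{(r,Z)}$ to $\overline{\alpha(x)(r,Z)}$ and products of generators to products of their images, hence maps $\bar{\mathcal U}(R\oplus\mathfrak L)$ into $\bar{\mathcal U}(R'\oplus\mathfrak L')$ and restricts there; the generator computation of the previous paragraph then lets the restriction descend across the $\mathcal V$-relation \eqref{twos7}, producing the desired measuring $\mathcal V(\Psi,\psi):C\longrightarrow Vect_k(\mathcal V(R,\mathfrak L,\nabla),\mathcal V(R',\mathfrak L',\nabla'))$. The unit axiom is automatic, since $\overline{1_R}$ is the unit of $\mathcal V$ and $T(\alpha)(x)(\overline{1_R})=\overline{(\psi(x)(1_R),0)}=\epsilon_C(x)\overline{1_{R'}}$. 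I expect the whole difficulty to be concentrated in the first step, namely forcing the $R'$-component of the bracket to transform correctly, where cocommutativity is indispensable; the remaining descents are formal consequences of the ideals being two-sided and of the multiplicativity of measurings.
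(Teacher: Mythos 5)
Your proposal is correct and takes essentially the same route as the paper's proof: the same map $(r,Z)\mapsto(x(r),x(Z))$ on $R\oplus\mathfrak L$, the same cocommutativity-based verification of the Lie-measuring identity (the paper's computation \eqref{79ph}), the same tensor-algebra extension as in \eqref{710ten}, restriction to $\bar{\mathcal U}$, and the same check that the generators \eqref{twos7} are annihilated in $\mathcal V(R',\mathfrak L',\nabla')$. The only difference is that you make explicit the formal descent through the two-sided ideals and the unit axiom, details the paper leaves implicit.
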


\begin{proof}
For each $x\in C$, we use the measuring $(\Psi,\psi)$ to define a map $R\oplus \mathfrak L\longrightarrow R'\oplus\mathfrak L'$, $(r,Z)\mapsto x(r,Z):=(\psi(x)(r),\Psi(x)(Z))=
(x(r),x(Z))$. This means that
\begin{equation}\label{79ph}
\begin{array}{ll}
x([(r_1,Z_1),(r_2,Z_2)])=x((Z_1(r_2)-Z_2(r_1),[Z_1,Z_2]) &= (x(Z_1(r_2))-x(Z_2(r_1)),x([Z_1,Z_2]))\\
&=(x_{(1)}(Z_1)(x_{(2)}(r_2))-x_{(1)}(Z_2)(x_{(2)}(r_1)),[x_{(1)}(Z_1),x_{(2)}(Z_2)]) \\
&=(x_{(1)}(Z_1)(x_{(2)}(r_2))-x_{(2)}(Z_2)(x_{(1)}(r_1)),[x_{(1)}(Z_1),x_{(2)}(Z_2)]) \\
&=[x_{(1)}(r_1,Z_1),x_{(2)}(r_2,Z_2)]\\
\end{array}
\end{equation} From \eqref{79ph}, it follows that there is a measuring of algebras from $\mathcal U(R\oplus\mathfrak L)$ to $\mathcal U(R'\oplus\mathfrak L')$ which is induced by
(for $x\in C$)
\begin{equation}\label{710ten}
x((r_1,Z_1)\otimes ... \otimes (r_n,Z_n)):=(x_{(1)}(r_1,Z_1)\otimes ... \otimes x_{(n)}(r_n,Z_n))
\end{equation}
at the level of the tensor algebra over $R\oplus \mathfrak L$. Since this measuring carries elements of $R\oplus \mathfrak L$ to $R'\oplus \mathfrak L'$, we can restrict it to a measuring of algebras from $\bar{\mathcal U}(R\oplus \mathfrak L)$ to $\bar{\mathcal U}(R'\oplus \mathfrak L')$. We now consider an element $\overline{(r_1r_2,r_1Z_2)}-\overline{r_1}\otimes \overline{(r_2,Z_2)}$ in $\bar{\mathcal U}(R\oplus \mathfrak L)$ of the form \eqref{twos7}. For any $x\in C$, we have
\begin{equation*}
\begin{array}{ll}
x(\overline{(r_1r_2,r_1Z_2)}-\overline{r_1}\otimes \overline{(r_2,Z_2)})&=\overline{(x(r_1r_2),x(r_1Z_2))}-\overline{x_{(1)}(r_1)}\otimes \overline{x_{(2)}(r_2,Z_2)}\\
&=\overline{(x_{(1)}(r_1)x_{(2)}(r_2),x_{(1)}(r_1)x_{(2)}(Z_2))}-\overline{x_{(1)}(r_1)}\otimes \overline{(x_{(2)}(r_2),x_{(2)}(Z_2))}=0\in \mathcal V(R',\mathfrak L',\nabla')\\
\end{array}
\end{equation*} It is now clear that the measuring from $\bar{\mathcal U}(R\oplus \mathfrak L)$ to $\bar{\mathcal U}(R'\oplus \mathfrak L')$ descends to their respective quotients, which gives a measuring of algebras from $\mathcal V(R,\mathfrak L,\nabla)$ to $\mathcal V(R',\mathfrak L',\nabla')$. 
\end{proof}

By the definition, it is clear that there exists a canonical inclusion $R\hookrightarrow \mathcal V(R,\mathfrak L,\nabla)$.  We know from \cite[$\S$ 3.3.2]{KoP}, \cite[$\S$ 5.1]{KoKr} that the universal enveloping algebra $\mathcal V(R,\mathfrak L,\nabla)$ becomes a left Hopf algebroid $(\mathcal V(R,\mathfrak L,\nabla),R,s_L,t_L,\Delta_L,\epsilon_L)$ determined by
\begin{align}\label{bil7a}
s_L=t_L:R\hookrightarrow  \mathcal V(R,\mathfrak L,\nabla)\qquad r\mapsto r\\
\label{bil7ep} \epsilon_L(Z)= 0\qquad \epsilon_L(r)=r\qquad r\in R, Z\in  \mathfrak L \\
\label{bil7b}\Delta_L(Z)=1\otimes Z+Z\otimes 1 \qquad \Delta_L(r)=r\otimes 1\qquad  r\in R,Z\in \mathfrak L
\end{align} We refer to \cite{KoP} for the complete notion of a  left Hopf algebroid, which is a left  bialgebroid satisfying certain conditions. Accordingly, the notion of coalgebra measuring of left Hopf algebroids, or that of comodule measurings of SAYD modules over left Hopf algebroids, may be obtained by modifying Definitions \ref{D2.3} and \ref{D5.6} in an obvious manner. We observe that $t_L$ in \eqref{bil7a} is also an anti-homomorphism because $R$ is commutative. 

\smallskip
Given $(R,\mathfrak L,\nabla)$, we know (see \cite[$\S$ 3.3.2]{KoP}) that $R$ becomes a right $\mathcal V(R,\mathfrak L,\nabla)$-module with action determined by the usual multiplication on $R$ as well
as $r\cdot Z:=\nabla_Z(r)$ for $r\in R$, $Z\in \mathfrak L$. In fact, together with the left comodule action $\Delta_R:R\longrightarrow \mathcal V(R,\mathfrak L,\nabla)\otimes_R R$, $r\mapsto 1\otimes r$, we know that $R$ becomes an SAYD-module over $\mathcal V(R,\mathfrak L,\nabla)$ (see \cite[Lemma 5.1]{KoKr}). 

\begin{lem}\label{L7.5ff}
Let $C$ be a cocommutative coalgebra and let $(\Psi,\psi)$ be a $C$-measuring of Lie-Rinehart algebras from  $(R,\mathfrak L,\nabla)$ to $(R',\mathfrak L',\nabla')$. Then, the triple:
\begin{equation}\label{msd7k}
\psi:C\longrightarrow Vect_k(R,R')\qquad \mathcal V(\Psi,\psi):C\longrightarrow Vect_k(\mathcal V(R,\mathfrak L,\nabla),\mathcal V(R',\mathfrak L',\nabla')) \qquad \psi:C\longrightarrow Vect_k(R,R')
\end{equation} is the data of a comodule measuring from $R$ to $R'$. 
\end{lem}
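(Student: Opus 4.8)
The plan is to verify, one at a time, the three conditions of Definition \ref{D5.6} (adapted to left Hopf algebroids in the obvious way), with the data read off as follows: $\mathcal U=\mathcal V(R,\mathfrak L,\nabla)$ has base algebra $A=R$, the SAYD module is $P=R$ with right action $r\cdot Z=\nabla_Z(r)$ and coaction $\Delta_R(r)=1\otimes r$, the comodule is $D=C$ with coaction $\Delta_C$, the algebra measuring is $\psi$, the $U$-measuring is $\mathcal V(\Psi,\psi)$, and $\Omega=\psi$. Condition (1) asks that $(\mathcal V(\Psi,\psi),\psi)$ be a measuring of left bialgebroids, i.e.\ the squares of \eqref{2.7}--\eqref{2.8} for $s_L,t_L,\epsilon_L,\Delta_L$, there being no antipode to accommodate. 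Lemma \ref{L7.3px} already supplies the underlying measuring of algebras, and since $\mathcal V(\Psi,\psi)(x)$ restricts on $R\subseteq\mathcal V(R,\mathfrak L,\nabla)$ to $\psi(x)$ and on $\mathfrak L$ to $\Psi(x)$, compatibility with $s_L=t_L$ in \eqref{bil7a} is immediate. For the coproduct I would check the square on the algebra generators $r\in R$ and $Z\in\mathfrak L$ using \eqref{bil7b}, the unit relation $\mathcal V(\Psi,\psi)(x)(1)=\epsilon_C(x)1$, and then propagate from generators to all of $\mathcal V(R,\mathfrak L,\nabla)$ by multiplicativity of $\Delta_L$ and of the measuring, the resulting index mismatch being absorbed by cocommutativity of $C$ exactly as in Lemma \ref{L2.4} and Proposition \ref{P3.1b}.

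The delicate point of (1) is compatibility with the counit $\epsilon_L$ of \eqref{bil7ep}, since $\epsilon_L$ is not an algebra map and naive multiplicativity is unavailable. Here I would use two structural facts: $\epsilon_L$ is left $R$-linear as a morphism of corings, and by the counit axiom in Definition \ref{D2.1}(ii) it annihilates the right ideal generated by $\mathfrak L$ (because $\epsilon_L(uZ)=\epsilon_L(u\,s_L(\epsilon_L(Z)))=0$). On the degree-zero part $R$ the identity $\epsilon'_L\circ\mathcal V(\Psi,\psi)(x)=\psi(x)\circ\epsilon_L$ is clear; a monomial $rZ_1\cdots Z_n$ with $n\ge 1$ is carried by $\mathcal V(\Psi,\psi)(x)$ to $\psi(x_{(1)})(r)\cdot x_{(2)}(Z_1)\cdots x_{(n+1)}(Z_n)$, an element of $R'$ times a product ending in $\mathfrak L'$, so left $R'$-linearity of $\epsilon'_L$ together with its vanishing on that right ideal forces the value to be $0=\psi(x)(\epsilon_L(rZ_1\cdots Z_n))$. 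This settles condition (1).

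For condition (2) I must check that $(\mathcal V(\Psi,\psi),\psi)$ is a comodule measuring of the right module $R$ into the right module $R'$, i.e.\ $\psi(y)(r\cdot u)=\psi(y_{(1)})(r)\cdot\mathcal V(\Psi,\psi)(y_{(2)})(u)$ for $u\in\mathcal V(R,\mathfrak L,\nabla)$. On $u=r''\in R$ this is the measuring property of $\psi$; on $u=Z\in\mathfrak L$ the left side is $\psi(y)(\nabla_Z(r))=\nabla'_{\,y_{(1)}(Z)}(y_{(2)}(r))$ by the Lie-Rinehart axiom \eqref{74ds}, which agrees with the right side $\nabla'_{\,y_{(2)}(Z)}(y_{(1)}(r))$ after swapping the two Sweedler legs by cocommutativity. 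The extension from generators to arbitrary $u$ uses only compatibility of the $\mathcal V$-action with multiplication. Condition (3), commutativity of the square \eqref{cd523} for the induced map \eqref{ra5.18}, is then immediate: both routes send $r$ to $1'\otimes_{R'}\psi(y)(r)$, using $\Delta_R(r)=1\otimes r$, $\Delta'_{R'}(r')=1'\otimes r'$, $\mathcal V(\Psi,\psi)(y_{(2)})(1)=\epsilon_C(y_{(2)})1'$, and the counit of $C$.

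I expect the main obstacle to be precisely the counit compatibility in condition (1): unlike the source, target, and coproduct axioms, it cannot be reduced to a check on algebra generators by multiplicativity, because the bialgebroid counit is only a left $R$-linear morphism of corings and not a ring homomorphism. The argument above circumvents this by replacing multiplicativity with the two module-theoretic properties of $\epsilon_L$, after which all remaining verifications reduce to the same Sweedler-index bookkeeping and appeals to cocommutativity that recur throughout Sections 2--6.
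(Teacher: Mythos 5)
Your proposal is correct and takes essentially the same route as the paper: the paper likewise reduces everything to Lemma \ref{L7.3px} plus the two generator computations you give --- $x(r\cdot Z)=x(\nabla_Z(r))=\nabla_{x_{(1)}(Z)}(x_{(2)}(r))=\nabla_{x_{(2)}(Z)}(x_{(1)}(r))=(x_{(1)}(r))\cdot x_{(2)}(Z)$ via \eqref{74ds} and cocommutativity for condition (2), and $\Delta_{R'}(x(r))=1\otimes x(r)=x(\Delta_R(r))$ for condition (3) --- and simply declares condition (1) ``clear'' from \eqref{bil7a}--\eqref{bil7b}. Your counit argument (correct, though $\mathcal V(R,\mathfrak L,\nabla)\,\mathfrak L$ is a \emph{left} ideal, not a right one; the identity $\epsilon_L(uZ)=\epsilon_L(u\,s_L(\epsilon_L(Z)))=0$ you use is exactly what is needed) fills in a detail the paper omits rather than constituting a different argument.
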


\begin{proof}
By  Lemma \ref{L7.3px} and the definitions in \eqref{bil7a}-\eqref{bil7b},  it is clear that $(\mathcal V(\Psi,\psi),\psi)$ is a measuring of left Hopf algebroids. Using \eqref{74ds} and the fact that $C$ is cocommutative, we have for any $x\in C$, $r\in R$ and $Z\in
\mathfrak L$:
\begin{equation}\label{718bl}
x(r\cdot Z)=x(\nabla_Z(r))=\nabla_{x_{(1)}(Z)}(x_{(2)}(r))=\nabla_{x_{(2)}(Z)}(x_{(1)}(r))=(x_{(1)}(r))\cdot x_{(2)}(Z)
\end{equation} It follows from \eqref{718bl} that $(\mathcal V(\Psi,\psi),\psi)$ is a comodule measuring from the right $\mathcal V(R,\mathfrak L,\nabla)$ module $R$ to the
right $\mathcal V(R',\mathfrak L',\nabla')$-module $R'$. Finally, for $x\in C$ and $r\in R$, we have
$
\Delta_{R'}(x(r))=1\otimes x(r)=x(1\otimes r)=x(\Delta_R(r))
$ and hence the condition in \eqref{cd523} is satisfied. This proves the result.
\end{proof}

Let $C^\bullet(\mathcal V(R,\mathfrak L,\nabla);R)$ be the cocyclic module corresponding to the left  Hopf algebroid $\mathcal V(R,\mathfrak L,\nabla)$ and the SAYD module $R$ as in \eqref{6.7yg}. Then, we know from \cite[Theorem 5.2]{KoKr} that the maps
\begin{equation}\label{ant7}
Alt_n: \wedge^n_R\mathfrak L\longrightarrow \mathcal V(R,\mathfrak L,\nabla)^{\otimes_Rn}\qquad Z_1\wedge ...\wedge Z_n\mapsto \frac{1}{n!}\underset{\sigma\in S_n}{\sum}(-1)^\sigma Z_{\sigma(1)}\otimes ...\otimes Z_{\sigma(n)}
\end{equation} induce a quasi-isomorphism of mixed complexes $(\wedge_R^\bullet\mathfrak L,0,\partial)\longrightarrow (C^\bullet(\mathcal V(R,\mathfrak L,\nabla);R),b,B)$. Here, $b$ is the standard Hochschild differential and $B$ is Connes' differential (see, for instance, \cite[$\S$ 2.5.13]{Loday}).

\begin{Thm}\label{T7fin}
Let $C$ be a cocommutative coalgebra and let $(\Psi,\psi)$ be a $C$-measuring of Lie-Rinehart algebras from  $(R,\mathfrak L,\nabla)$ to $(R',\mathfrak L',\nabla')$. Then, for each $x\in C$, the following diagram is commutative
\begin{equation}\label{cd717h}
\begin{CD}
\underset{\mbox{$n\equiv \bullet$ (mod $2$) }}{\bigoplus}H_n(R,\mathfrak L,\nabla) @>\cong>> HP^\bullet(\mathcal V(R,\mathfrak L,\nabla);R)\\
@V\underset{\mbox{\tiny $n\equiv \bullet$ (mod $2$) }}{\oplus}\underline{\Psi}_n^{Lie}(x)VV @VV\overline{\Psi}^\bullet_{per}(x)V\\
\underset{\mbox{$n\equiv \bullet$ (mod $2$) }}{\bigoplus}H_n(R',\mathfrak L',\nabla') @>\cong>> HP^\bullet(\mathcal V(R',\mathfrak L',\nabla);R')\\
\end{CD}
\end{equation} Here the right vertical arrow $\overline{\Psi}^\bullet_{per}(x)$ is the induced morphism on periodic cyclic cohomology induced by the morphism of cocyclic modules in
Proposition \ref{P6.3h}. The horizontal arrows are the induced by the quasi-isomorphism in \eqref{ant7}. 
\end{Thm}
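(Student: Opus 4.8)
The plan is to reduce the statement to a single chain-level identity intertwining the antisymmetrisation quasi-isomorphism $Alt_\bullet$ of \eqref{ant7} with the two chain maps that induce the vertical arrows. First I would fix the set-up. By Lemma \ref{L7.5ff}, the $C$-measuring $(\Psi,\psi)$ of Lie-Rinehart algebras produces a comodule measuring from the SAYD module $R$ over $\mathcal V(R,\mathfrak L,\nabla)$ to the SAYD module $R'$ over $\mathcal V(R',\mathfrak L',\nabla')$, in which the comodule is $C$ itself (coaction given by $\Delta_C$) and the coefficient map is $\psi$. Hence Proposition \ref{P6.3h} applies and, for each $x\in C$, yields a morphism of cocyclic modules on $C^\bullet(\mathcal V(R,\mathfrak L,\nabla);R)$, which induces the right vertical arrow $\overline{\Psi}^\bullet_{per}(x)$ after passing to periodic cyclic cohomology. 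I would also record the key compatibility coming from Lemma \ref{L7.3px}: the measuring $\mathcal V(\Psi,\psi)(x)$ restricts on $\mathfrak L\subseteq\mathcal V(R,\mathfrak L,\nabla)$ to $\Psi(x)$, so that $\mathcal V(\Psi,\psi)(x)(Z)=x(Z)\in\mathfrak L'$ for $Z\in\mathfrak L$; this is what makes the two chain-level maps directly comparable on the image of $Alt_\bullet$.

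The heart of the argument is the commutativity of the square at the level of chains, namely that
\[
\overline{\Psi}^n_{per}(x)\circ Alt_n \;=\; Alt_n\circ \underline{\Psi}^{Lie}_n(x)\colon\ \wedge^n_R\mathfrak L\longrightarrow C^n(\mathcal V(R',\mathfrak L',\nabla');R').
\]
Evaluating on $Z_1\wedge\cdots\wedge Z_n$, the left-hand route antisymmetrises first and then applies the measuring slot by slot, producing
\[
\tfrac{1}{n!}\sum_{\sigma\in S_n}(-1)^\sigma\, x_{(1)}(Z_{\sigma(1)})\otimes\cdots\otimes x_{(n)}(Z_{\sigma(n)}),
\]
where the SAYD slot $\otimes_R 1_R$ contributes $\epsilon_C$ via $\psi$ and collapses one Sweedler leg. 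The right-hand route applies the measuring first and then antisymmetrises, producing
\[
\tfrac{1}{n!}\sum_{\sigma\in S_n}(-1)^\sigma\, x_{(\sigma(1))}(Z_{\sigma(1)})\otimes\cdots\otimes x_{(\sigma(n))}(Z_{\sigma(n)}).
\]
These two expressions coincide because $C$ is cocommutative: the iterated coproduct $x_{(1)}\otimes\cdots\otimes x_{(n)}$ is invariant under permutation of its legs, so relabelling $x_{(\sigma(j))}\rightsquigarrow x_{(j)}$ inside each summand is legitimate. This is exactly the mechanism already used in Propositions \ref{P4.1n} and \ref{P6.3h}, and it is the only nontrivial input needed here.

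With the chain-level square in hand, I would assemble the diagram as follows. By \cite[Theorem 5.2]{KoKr} the maps $Alt_\bullet$ form a quasi-isomorphism of mixed complexes $(\wedge^\bullet_R\mathfrak L,0,\partial)\to(C^\bullet(\mathcal V(R,\mathfrak L,\nabla);R),b,B)$, and hence induce the horizontal isomorphisms on the $\mathbb Z/2$-graded periodic theory; the left-hand corners are $\bigoplus_{n\equiv\bullet}H_n(R,\mathfrak L,\nabla)$ precisely because the first differential of $(\wedge^\bullet_R\mathfrak L,0,\partial)$ vanishes. The left vertical map $\underline{\Psi}^{Lie}_\bullet(x)$ is a morphism of this mixed complex (a chain map for $\partial$ by Proposition \ref{skP7.2}, and trivially for the zero differential), while the right vertical map $\overline{\Psi}^\bullet_{per}(x)$ is a morphism of the mixed complex $(C^\bullet,b,B)$ by Proposition \ref{P6.3h}. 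Since the chain-level square commutes and all four maps respect both differentials, passing to periodic cyclic cohomology yields the commutative diagram \eqref{cd717h}.

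The main obstacle I anticipate is bookkeeping rather than anything conceptual: one must track carefully the identification $C^n(\mathcal V(R,\mathfrak L,\nabla);R)\cong\mathcal V(R,\mathfrak L,\nabla)^{\otimes_R n}$ implicit in \eqref{ant7}, so that the SAYD slot $\otimes_R R$ is handled correctly under the measuring (its unit $1_R$ absorbing one Sweedler component through $\epsilon_C$), and one must align the permutation of Sweedler legs afforded by cocommutativity with the permutation $\sigma$ coming from the antisymmetriser $Alt_n$. Once these indices are matched, commutativity is forced, exactly as in the preceding sections.
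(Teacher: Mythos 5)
Your proposal is correct and follows essentially the same route as the paper: the proof there likewise reduces to the chain-level identity intertwining $Alt_n$ with the measuring action, verified by antisymmetrising and using cocommutativity of $C$ to relabel the Sweedler legs $x_{(\sigma(j))}\rightsquigarrow x_{(j)}$, and then invokes the quasi-isomorphism of mixed complexes from \eqref{ant7} together with \cite[Theorem 3.13]{KoP} and \cite[Theorem 5.2]{KoKr} for the horizontal isomorphisms. Your additional bookkeeping (the comodule $D=C$ with coaction $\Delta_C$ via Lemma \ref{L7.5ff}, and the SAYD slot $\otimes_R 1_R$ absorbing a Sweedler leg through $\epsilon_C$) is left implicit in the paper but is consistent with it.
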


\begin{proof}
The horizontal isomorphisms in \eqref{cd717h} follow from the quasi-isomorphism in \eqref{ant7} and \cite[Theorem 3.13]{KoP}, \cite[Theorem 5.2]{KoKr}. We need to verify that the maps in \eqref{ant7} commute with the action of the measuring $(\Psi,\psi)$. Since $C$ is cocommutative, we note that for $x\in C$ and $Z_1\wedge ...\wedge Z_n
\in  \wedge^n_R\mathfrak L$, we have
\begin{equation*}
\begin{array}{ll}
Alt_n(x(Z_1\wedge ...\wedge Z_n))&=Alt_n(x_{(1)}(Z_1)\wedge ... \wedge x_{(n)}(Z_n))\\
&=(1/n!)\underset{\sigma\in S_n}{\sum}(-1)^\sigma x_{\sigma(1)}(Z_{\sigma(1)})\otimes ...\otimes x_{\sigma(n)}(Z_{\sigma(n)} )=(1/n!)\underset{\sigma\in S_n}{\sum}(-1)^\sigma x_{(1)}(Z_{\sigma(1)})\otimes ...\otimes x_{(n)}(Z_{\sigma(n)} )\\
&=x(Alt_n(Z_1\wedge ...\wedge Z_n))
\end{array}
\end{equation*} This proves the result.
\end{proof}

\section{Operads with multiplication, comp modules and morphisms on cyclic homology}

We start this final section by recalling the notion of a non-symmetric operad (or non-$\Sigma$ operad) with multiplication. In this section, we will only consider such operads. For more on this subject, we refer the reader, for instance, to \cite{Fres}, \cite{GerS}, \cite{Lodv}, \cite{Men}. 

\begin{defn}\label{D7.1} (see, for instance, \cite[Definition 2.2]{Ko4})  A non-$\Sigma$ operad $(\mathscr  O,m,e)$ over $k$ with multiplication consists of the following:

\smallskip
(a) A collection of vector spaces $\mathscr O=\{\mathscr O(n)\}_{n\geq 0}$. 

\smallskip
(b) A family of $k$-bilinear operations $\circ_i:\mathscr O(p)\otimes \mathscr O(q)\longrightarrow \mathscr O(p+q-1)$ and an identity $\mathbbm{1}\in \mathscr O(1)$  satisfying the following conditions
(for $u\in \mathscr O(p)$, $v\in \mathscr O(q)$, $w\in \mathscr O(r)$)
\begin{equation}
\begin{array}{lll}
u\circ_iv & = 0 & \mbox{if $p<i$ or $p=0$}\\
(u\circ_iv)\circ_jw&=\left\{\begin{array}{ll}
(u\circ_jw)\circ_{i+r-1}v& \mbox{if $j<i$}\\
u\circ_i(v\circ_{j-i+1}w) & \mbox{if $i\leq j<q+i$}\\
(u\circ_{j-q+1}w)\circ_iv & \mbox{if $j\geq q+i$}\\
\end{array} \right. &\\
u\circ_i\mathbbm{1}&=\mathbbm{1}\circ_1u=u&\mbox{for $i\leq p$}\\
\end{array}
\end{equation}

\smallskip
(c) An operad multiplication $m\in \mathscr O(2)$ and a unit $e\in \mathscr O(0)$ such that
\begin{equation}
m\circ_1m=m\circ_2m \qquad m\circ_1e=m\circ_2e=\mathbbm{1}
\end{equation}

\end{defn}

We now consider the notion of a cocommutative measuring between non-symmetric operads with multiplication.

\begin{defn}\label{D7.11h}
Let $C$ be a cocommutative $k$-coalgebra and let $(\mathscr O,m,e)$, $(\mathscr O',m',e')$ be non-$\Sigma$ operads with multiplication. A $C$-measuring from $(\mathscr O,m,e)$ to $(\mathscr O',m',e')$ consists of a family of maps $\{\Psi_n:C\longrightarrow Vect_k(\mathscr O(n),\mathscr O'(n)), \textrm{ }x\mapsto x_n:=\Psi_n(x):\mathscr O(n)\longrightarrow \mathscr O'(n)\}_{n\geq 0}$ such that 
\begin{equation}\label{eq7.2ctn}
\begin{array}{c}
x_{p+q-1}(u\circ_iv):=\Psi_{p+q-1}(x)(u\circ_iv)=\Psi_{p}(x_{(1)})(u)\circ'_i\Psi_q(x_{(2)})(v)=\sum x_{(1),p}(u)\circ'_i x_{(2),q}(v)\\ x_2(m)=\Psi_2(x)(m)=\epsilon_C(x) m'\qquad x_0(e)=\Psi_0(x)(e)=\epsilon_C(x)e'
\end{array}
\end{equation} for $u\in \mathscr O(p)$, $v\in \mathscr O(q)$ and $x\in C$, where $\Delta_C(x)=x_{(1)}\otimes x_{(2)}$ is the coproduct on $C$ and $\epsilon_C:C\longrightarrow k$ is the counit on $C$.
\end{defn}

\begin{thm}\label{P7.3dq}
Let $(\mathscr O,m,e)$, $(\mathscr O',m',e')$ be non-$\Sigma$ operads with multiplication. Then, 

\smallskip
(a) There exists a cocommutative coalgebra $\mathcal M_c(\mathscr O,\mathscr O')$ and a measuring
$\{\Phi_n:\mathcal M_c(\mathscr O,\mathscr O')\longrightarrow Vect_k(\mathscr O(n),\mathscr O'(n))\}_{n\geq 0}$ satisfying the following universal property: given any measuring $\{\Psi_n:C\longrightarrow Vect_k(\mathscr O(n),\mathscr O'(n))\}_{n\geq 0}$ with a cocommutative coalgebra $C$, there exists a unique morphism $\xi:C\longrightarrow \mathcal M_c(\mathscr O,\mathscr O')$ of coalgebras making the following diagram commutative for each $n\geq 0$
\begin{equation}
\xymatrix{
\mathcal M_c(\mathscr O,\mathscr O')\ar[rr]^{\Phi_n} && Vect_k(\mathscr O(n),\mathscr O'(n))\\
&C\ar[ul]^\xi\ar[ur]_{\Psi_n}&\\
}
\end{equation}

\smallskip
(b) The  non-$\Sigma$ operads over $k$ with multiplication form a category that is enriched over the category $CoCoalg_k$ of cocommutative $k$-coalgebras. 
\end{thm}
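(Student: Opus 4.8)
The plan is to mimic, almost verbatim, the construction of the universal measuring coalgebra for Hopf algebroids carried out in Proposition \ref{P2.5}, Proposition \ref{P2.6} and Theorem \ref{T2.7}, now taking as target the single vector space $V := \prod_{n\geq 0} Vect_k(\mathscr O(n),\mathscr O'(n))$. A $C$-measuring from $\mathscr O$ to $\mathscr O'$ in the sense of Definition \ref{D7.11h} is the same datum as a linear map $\Psi:C\longrightarrow V$ (its components being the $\Psi_n$) whose associated family satisfies the three identities of \eqref{eq7.2ctn}. The observation that makes the whole construction go through is that, for fixed $u,v,i$ (respectively for $m$ and for $e$), each of those identities is a \emph{linear} condition on $x\in C$ once the coproduct $\Delta_C$ and the evaluation maps are fixed; this is exactly the feature exploited in the Hopf-algebroid case.

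For part (a), I would invoke the cofree-coalgebra adjunction \eqref{adj2}, form the canonical map $\pi(V):\mathfrak C(V)\longrightarrow V$, and set $\mathcal M_c(\mathscr O,\mathscr O')$ to be the sum of all cocommutative subcoalgebras $D\subseteq \mathfrak C(V)$ for which $\pi(V)|_D$ is a measuring. Since the measuring identities are linear in $x$ and a sum of subcoalgebras is again a cocommutative subcoalgebra, the restriction $\{\Phi_n\}:=\pi(V)|_{\mathcal M_c(\mathscr O,\mathscr O')}$ (followed by projection to the $n$-th factor) is itself a measuring, exactly as in Proposition \ref{P2.5}. The universal property is then checked identically: a measuring $\{\Psi_n\}$ assembles into $\Psi:C\longrightarrow V$, which factors through a unique coalgebra map $\xi:C\longrightarrow \mathfrak C(V)$ by \eqref{adj2}; the image $\xi(C)$ is a cocommutative subcoalgebra on which $\pi(V)$ restricts to a measuring, so $\xi(C)\subseteq \mathcal M_c(\mathscr O,\mathscr O')$ and the desired factorization follows, with uniqueness coming from the adjunction.

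For part (b), the first step is the analogue of Proposition \ref{P2.6}: given measurings $\Psi:C\to\prod_n Vect_k(\mathscr O(n),\mathscr O'(n))$ and $\Psi':C'\to\prod_n Vect_k(\mathscr O'(n),\mathscr O''(n))$, I would define the composite on $C\otimes C'$ by $(x\otimes x')_n := \Psi'_n(x')\circ \Psi_n(x)$ and verify the three conditions of \eqref{eq7.2ctn}. Using the coproduct $(x\otimes x')_{(1)}\otimes (x\otimes x')_{(2)} = (x_{(1)}\otimes x'_{(1)})\otimes (x_{(2)}\otimes x'_{(2)})$ on the tensor-product coalgebra, the $\circ_i$-compatibility reduces to applying $\Psi'$ to the $\circ_i$-identity for $\Psi$ and then invoking its own $\circ_i$-identity; the conditions on $m$ and on $e$ follow from $\epsilon_{C\otimes C'}=\epsilon_C\otimes\epsilon_{C'}$. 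This produces a measuring from $\mathscr O$ to $\mathscr O''$. Feeding the composite measuring $\mathcal M_c(\mathscr O,\mathscr O')\otimes \mathcal M_c(\mathscr O',\mathscr O'')\longrightarrow \prod_n Vect_k(\mathscr O(n),\mathscr O''(n))$ into the universal property of part (a) yields the composition morphism of hom-objects, and the family $\{t\cdot \mathrm{id}_{\mathscr O(n)}\}_n$ is immediately seen to be a measuring (since $\Delta_k(1)=1\otimes 1$ and $\epsilon_k(1)=1$), producing the unit $k\longrightarrow \mathcal M_c(\mathscr O,\mathscr O)$ through part (a) again. The associativity and unit coherence axioms of the enrichment then follow from the uniqueness clause of the universal property, just as in Theorem \ref{T2.7}.

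The only point requiring genuine care — and it is conceptual rather than difficult — is the bookkeeping in the composition step: one must correctly match the coproduct on $C\otimes C'$ against the nested applications of the two families $\{\Psi_n\}$ and $\{\Psi'_n\}$ in the $\circ_i$-identity, and confirm that the defining sum for $\mathcal M_c(\mathscr O,\mathscr O')$ is closed under the measuring conditions, which hinges precisely on their linearity in $x$. No obstacle arises from the infinitely many arities, as they are harmlessly absorbed into the single product $V$, and the cofree coalgebra $\mathfrak C(V)$ exists for this $V$ just as for any vector space.
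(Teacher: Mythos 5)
Your proposal is correct and takes essentially the same route as the paper's own proof: the paper likewise sets $V=\prod_{n\geq 0} Vect_k(\mathscr O(n),\mathscr O'(n))$, defines $\mathcal M_c(\mathscr O,\mathscr O')$ as the sum of all cocommutative subcoalgebras of the cofree coalgebra $\mathfrak C(V)$ on which $\pi(V)$ restricts to a measuring, and then transfers the universal property, the composition of measurings, and the unit $k\longrightarrow \mathcal M_c(\mathscr O,\mathscr O)$ verbatim from Proposition \ref{P2.5}, Proposition \ref{P2.6} and Theorem \ref{T2.7}. The details you spell out (linearity of the conditions in \eqref{eq7.2ctn}, the tensor-coalgebra bookkeeping in the composition step) are precisely what the paper leaves implicit by citing those earlier results.
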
 

\begin{proof}
(a) We set $V:=\underset{n\geq 0}{\prod} Vect_k( \mathscr O(n),\mathscr O'(n))$. Let $\pi(V):\mathfrak C(V)\longrightarrow V$ be the cofree coalgebra over $V$. We now set $\mathcal M_c(\mathscr O,\mathscr O'):=\sum D$, where the sum is taken over all cocommutative subcoalgebras of $\mathfrak C(V)$ such that the restriction 
of $\pi(V)$ to $D$ gives a measuring from $\mathscr O$ to $\mathscr O'$. The universal property of $\mathcal M_c(\mathscr O,\mathscr O')$ now follows as in the proof of
Proposition \ref{P2.5}. 

\smallskip
(b) For $(\mathscr O,m,e)$, $(\mathscr O',m',e')$, the hom-object in $CoCoalg_k$ is given by $\mathcal M_c(\mathscr O,\mathscr O')$. Further, the scalar multiples of the identity give a measuring from $\mathscr O$ to itself, which induces a morphism $k\longrightarrow \mathcal M_c(\mathscr O,\mathscr O)$ of coalgebras by part (a). The result now follows as in the proof of Theorem \ref{T2.7}. 
\end{proof}

\begin{defn}\label{D7.2} (see \cite[Definition 3.1]{Ko4}) A cyclic unital (left) comp module $\mathscr L$ over an operad $(\mathscr O,m,e)$ with multiplication consists of the following data:

\smallskip
(a) A collection of vector spaces $\mathscr L=\{\mathscr L(n)\}_{n\geq 0}$. 

\smallskip
(b) A family of $k$-bilinear operations 
\begin{equation} \bullet_i:\mathscr O(p)\otimes \mathscr L(n)\longrightarrow \mathscr (n-p+1), \textrm{ } 0\leq i\leq n+1-p
\end{equation} set to be zero for $p\geq n+1$ and satisfying the following conditions for
$u\in \mathscr O(p)$, $v\in \mathscr O(q)$, $l\in \mathscr L(n)$
\begin{equation*}
\begin{array}{ll}
u\bullet_i(v\bullet_jl)=\left\{\begin{array}{ll}
v\bullet_j(u\bullet_{i+q-1}l)&\mbox{$j<i$}\\
(u\circ_{j-i+1}v)\bullet_il& \mbox{if $j-p<i\leq j$}\\
v\bullet_{j-p+1}(u\bullet_il)&\mbox{if $0\leq i\leq j-p$}\\
\end{array}\right.  & \qquad\mbox{for $(p>0)$}\\
u\bullet_i(v\bullet_jl)=\left\{\begin{array}{ll}
v\bullet_j(u\bullet_{i+q-1}l)&\mbox{$j<i$}\\
v\bullet_{j+1}(u\bullet_il)&\mbox{if $0\leq i\leq j$}\\
\end{array}\right.  & \qquad\mbox{for $(p=0)$}\\
\end{array}
\end{equation*}
as well as $\mathbbm{1}\bullet_il=l$ for $i=0,1,...,n$. 

\smallskip
(c) A cyclic operator $t\in \underset{n\geq 1}{\prod} Vect_k( \mathscr L(n),\mathscr L'(n))$  satisfying
\begin{equation}
t(u\bullet_il)=u\bullet_{i+1}t(l) 
\end{equation} for $u\in \mathscr O(p)$, $l\in \mathscr L(n)$ and $0\leq i\leq n-p$ as well as $t^{n+1}=id$.

\end{defn}

We will now consider comodule measurings between cyclic unital comp modules.

\begin{defn}\label{D7.3}
Let $\{\Psi_n:C\longrightarrow Vect_k(\mathscr O(n),\mathscr O'(n))\}_{n\geq 0}$ be a measuring of non-$\Sigma$ operads with multiplication. Let $\mathscr L$ and $\mathscr L'$ be cyclic unital comp modules over $\mathscr O$ and $\mathscr O'$ respectively. Then, a    $(C,\{\Psi_n\}_{n\geq 0})$-comodule measuring 
from $\mathscr P$ to $\mathscr P'$ consists of a left $C$-comodule $D$ and a family of morphisms $\{\Omega_n:D\longrightarrow Vect_k(\mathscr L(n),\mathscr L'(n))\}_{n\geq 0}$ satisfying
\begin{equation}\label{7.7ub}
\Omega_{n-p+1}(y)(u\bullet_il)=\Psi_{p}(y_{(0)})(u)\bullet_i\Omega_{n}(y_{(1)})(l)
\end{equation}
for $y\in D$, $u\in\mathscr O(p)$, $l\in \mathscr L(n)$, $0\leq i\leq n+1-p$ and also
\begin{equation}\label{7.5rf}
\Omega_n(y)(t(l))=t'(\Omega_n(y)(l))
\end{equation} for $y\in D$, $l\in \mathscr L(n)$, where $t$ and $t'$ are respectively the cyclic operators on $\mathscr L$ and $\mathscr L'$. 

\end{defn}

\begin{thm}\label{P7.6vs}
Let $\{\Psi_n:C\longrightarrow Vect_k(\mathscr O(n),\mathscr O'(n))\}_{n\geq 0}$ be a measuring of non-$\Sigma$ operads with multiplication. Let $\mathscr L$ and $\mathscr L'$ be cyclic unital comp modules over $\mathscr O$ and $\mathscr O'$ respectively. Then, there exists a  $(C,\{\Psi_n\}_{n\geq 0})$-comodule  measuring $ 
\{\Theta_n:\mathcal Q_C(\mathscr L,\mathscr L')\longrightarrow Vect_k(\mathscr L(n),\mathscr L'(n))\}_{n\geq 0}$ satisfying the following universal property:  given any  $(C,\{\Psi_n\}_{n\geq 0})$-comodule measuring $
\{\Omega_n:D\longrightarrow Vect_k(\mathscr L(n),\mathscr L'(n))\}_{n\geq 0}$, there exists a morphism $\chi:D\longrightarrow \mathcal Q_C(\mathscr 
L,\mathscr L')$ of $C$-comodules making the following diagram commutative  for each $n\geq 0$
\begin{equation}
\xymatrix{
\mathcal Q_C(\mathscr L,\mathscr L')\ar[rr]^{\Theta_n} && Vect_k(\mathscr L(n),\mathscr L'(n))\\
&D\ar[ul]^\chi\ar[ur]_{\Omega_n}&\\
}
\end{equation}
\end{thm}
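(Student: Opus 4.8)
The plan is to mimic the construction of the universal measuring comodule for SAYD modules in Theorem \ref{T5.7}, with the single vector space $Vect_k(P,P')$ replaced by the product $V:=\prod_{n\geq 0}Vect_k(\mathscr L(n),\mathscr L'(n))$ and the conditions of Definition \ref{D5.6} replaced by the conditions \eqref{7.7ub} and \eqref{7.5rf} of Definition \ref{D7.3}. First I would invoke the right adjoint $\mathfrak R_C$ to the forgetful functor from $C$-comodules to $Vect_k$, as in \eqref{radj5t} (with the evident left-comodule variant), to obtain the canonical counit morphism $\rho(V):\mathfrak R_C(V)\longrightarrow V$ of vector spaces. For each $n\geq 0$ let $pr_n:V\longrightarrow Vect_k(\mathscr L(n),\mathscr L'(n))$ denote the projection; then $pr_n\circ\rho(V)$, restricted to any $C$-subcomodule of $\mathfrak R_C(V)$, furnishes a candidate family out of that subcomodule.

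Next I would define $\mathcal Q_C(\mathscr L,\mathscr L'):=\sum Q$, where the sum is taken over all $C$-subcomodules $Q\subseteq \mathfrak R_C(V)$ such that the family $\{pr_n\circ(\rho(V)|_Q)\}_{n\geq 0}$ is a $(C,\{\Psi_n\}_{n\geq 0})$-comodule measuring in the sense of Definition \ref{D7.3}. I would then verify that this sum is itself such a subcomodule: a sum of $C$-subcomodules is again a $C$-subcomodule, and since both identities \eqref{7.7ub} and \eqref{7.5rf} are $k$-linear in the comodule variable $y$ (with the coaction $\Delta_D$ applied and then $\Psi_p$, $\Omega_n$ and $\bullet_i$, all linear, applied), they are preserved under forming finite sums of elements drawn from the individual subcomodules $Q$. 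Consequently the family $\Theta_n:=pr_n\circ(\rho(V)|_{\mathcal Q_C(\mathscr L,\mathscr L')})$ is again a comodule measuring, which produces the object $\mathcal Q_C(\mathscr L,\mathscr L')$ together with its structure maps.

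For the universal property, given any $(C,\{\Psi_n\}_{n\geq 0})$-comodule measuring $\{\Omega_n:D\longrightarrow Vect_k(\mathscr L(n),\mathscr L'(n))\}_{n\geq 0}$, I would assemble the $\Omega_n$ into a single $k$-linear map $\Omega:D\longrightarrow V$. The adjunction \eqref{radj5t} then produces a unique morphism $\chi:D\longrightarrow \mathfrak R_C(V)$ of $C$-comodules with $\rho(V)\circ\chi=\Omega$. Because $\chi$ is a comodule morphism intertwining the coactions, the restriction of $\rho(V)$ to the subcomodule $\chi(D)\subseteq\mathfrak R_C(V)$ is itself a comodule measuring; by the defining property of $\mathcal Q_C(\mathscr L,\mathscr L')$ as the sum of all such subcomodules, it follows that $\chi(D)\subseteq \mathcal Q_C(\mathscr L,\mathscr L')$, so $\chi$ corestricts to the required factorization $D\longrightarrow\mathcal Q_C(\mathscr L,\mathscr L')$ making each triangle commute.

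The step I expect to be the main obstacle is the closure-under-sums verification: unlike the pure coalgebra-measuring case of Proposition \ref{P2.5}, condition \eqref{7.7ub} couples the coaction $\Delta_D$ of the comodule with the fixed operad measuring $\{\Psi_n\}$, so I must check that an element $y=\sum_i y_i$ with $y_i\in Q_i$ still satisfies \eqref{7.7ub} and \eqref{7.5rf}. This reduces to the bilinearity of the operations $\bullet_i$ together with the fact that each $Q_i$ is $\Delta_D$-stable, so that the Sweedler components of $y$ remain inside $\sum_i Q_i$ and the identity for $y$ follows by summing the identities for the $y_i$. Once this is settled, the remainder of the argument is formally identical to that of Theorem \ref{T5.7}.
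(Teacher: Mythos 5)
Your proposal is correct and follows exactly the paper's route: the paper likewise sets $V=\prod_{n\geq 0}Vect_k(\mathscr L(n),\mathscr L'(n))$, takes the canonical map $\rho(V):\mathfrak R_C(V)\longrightarrow V$ from the adjunction \eqref{radj5t}, defines $\mathcal Q_C(\mathscr L,\mathscr L')$ as the sum of all subcomodules on which $\rho(V)$ restricts to a comodule measuring, and deduces the universal property by factoring $\Omega$ through $\chi:D\longrightarrow\mathfrak R_C(V)$ and observing $\chi(D)\subseteq\mathcal Q_C(\mathscr L,\mathscr L')$, just as in Theorem \ref{T5.7}. Your explicit closure-under-sums check (linearity of \eqref{7.7ub} and \eqref{7.5rf} in $y$) is a detail the paper leaves implicit, and it is verified correctly.
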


\begin{proof}
We set $V:\underset{n\geq 0}{\prod} Vect_k( \mathscr L(n),\mathscr L'(n))$. Using the adjunction in \eqref{radj5t}, we have the $C$-comodule $\mathfrak R_C(V)$ and the canonical morphism $\rho(V):\mathfrak R_C(V)\longrightarrow V$ of vector spaces. We now set $\mathcal Q_C(\mathscr L,\mathscr L')=\sum Q\subseteq \mathfrak R_C(V)$, where the sum is taken over all subcomodules $Q\subseteq \mathfrak R_C(V)$ such that the restriction of $\rho(V)$ to $Q$ gives a $(C,\{\Psi_n\}_{n\geq 0})$-comodule measuring from $\mathscr L$ to $\mathscr L'$. The universal property of $\mathcal Q_C(\mathscr P,\mathscr P')$ now follows as in the proof of 
Theorem \ref{T5.7}. 
\end{proof}

\begin{Thm}\label{T7.7kq}
Let $Comp_k$ be the category given by

\smallskip
(a) Objects: pairs $(\mathscr O,\mathscr L)$ where $\mathscr O$ is a non-$\Sigma$ operad with multiplication and $\mathscr L$ is a cyclic unital comp module over $\mathscr O$.

\smallskip
(b) Hom objects: for pairs $(\mathscr O,\mathscr L)$, $(\mathscr O',\mathscr L')\in Comp_k$, we set
\begin{equation}
Comp_k((\mathscr O,\mathscr L),(\mathscr O',\mathscr L')):=(\mathcal M_c(\mathscr O,\mathscr O'),\mathcal Q_{\mathcal M_c(\mathscr O,\mathscr O')}(\mathscr L,\mathscr L'))\in Comod_k
\end{equation} Then, $Comp_k$ is enriched over the symmetric monoidal category $Comod_k$. 
\end{Thm}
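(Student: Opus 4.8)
The plan is to follow verbatim the strategy of Theorem \ref{T5.9hh}, replacing the Hopf algebroid data $(\mathcal U, P)$ by the operad--comp-module data $(\mathscr O, \mathscr L)$ and invoking Proposition \ref{P7.3dq} and Proposition \ref{P7.6vs} in place of Proposition \ref{P2.5} and Theorem \ref{T5.7}. The two ingredients needed are a unit morphism for each object and an associative composition of hom-objects in $Comod_k$, both obtained from the relevant universal properties; recall that $Comod_k$ is symmetric monoidal with $(C, D) \otimes (C', D') = (C \otimes C', D \otimes D')$.

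First I would establish the operad analogue of Lemma \ref{Lem5.8}. Given a comodule measuring $(\{\Psi_n\}, \{\Omega_n\})$ from $\mathscr L$ to $\mathscr L'$ carried by $(C, D)$ and a comodule measuring $(\{\Psi'_n\}, \{\Omega'_n\})$ from $\mathscr L'$ to $\mathscr L''$ carried by $(C', D')$, I claim that the composites $\{\Psi'_n \circ \Psi_n\}$, which form a measuring of operads from $\mathscr O$ to $\mathscr O''$ by the composition of operad measurings underlying Proposition \ref{P7.3dq}(b) (established exactly as in Proposition \ref{P2.6}), together with $(\Omega' \circ \Omega)_n(y \otimes y') := \Omega'_n(y') \circ \Omega_n(y)$, constitute a $(C \otimes C', \{\Psi'_n \circ \Psi_n\})$-comodule measuring from $\mathscr L$ to $\mathscr L''$, where $D \otimes D'$ carries the tensor-product $C \otimes C'$-comodule structure. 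The comp-compatibility \eqref{7.7ub} follows by applying \eqref{7.7ub} first for $\Omega$ and then for $\Omega'$ and regrouping the legs of the tensor-product coaction, while the cyclic compatibility \eqref{7.5rf} is immediate, since a composite of two intertwiners of the cyclic operators is again an intertwiner. Feeding this composite into the universal property of Proposition \ref{P7.6vs} then yields a canonical morphism $\mathcal Q_C(\mathscr L, \mathscr L') \otimes \mathcal Q_{C'}(\mathscr L', \mathscr L'') \longrightarrow \mathcal Q_{C \otimes C'}(\mathscr L, \mathscr L'')$ of $C \otimes C'$-comodules, the exact analogue of \eqref{528k}.

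With this in hand I would assemble the enrichment as in Theorem \ref{T5.9hh}. For the unit on $(\mathscr O, \mathscr L)$, the scalar multiples of the identities give a coalgebra map $k \longrightarrow \mathcal M_c(\mathscr O, \mathscr O)$ by Proposition \ref{P7.3dq} and hence, by Proposition \ref{P7.6vs}, a comodule map $k \longrightarrow \mathcal Q_{\mathcal M_c(\mathscr O, \mathscr O)}(\mathscr L, \mathscr L)$. For composition I would apply the lemma above with $C = \mathcal M_c(\mathscr O, \mathscr O')$ and $C' = \mathcal M_c(\mathscr O', \mathscr O'')$, then postcompose along the coalgebra composition $\mathcal M_c(\mathscr O, \mathscr O') \otimes \mathcal M_c(\mathscr O', \mathscr O'') \longrightarrow \mathcal M_c(\mathscr O, \mathscr O'')$ furnished by Proposition \ref{P7.3dq}(b). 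Since this coalgebra map arises from the universal property applied to the composite operad measuring, the comodule $\mathcal Q_{\mathcal M_c(\mathscr O, \mathscr O') \otimes \mathcal M_c(\mathscr O', \mathscr O'')}(\mathscr L, \mathscr L'')$, viewed as an $\mathcal M_c(\mathscr O, \mathscr O'')$-comodule by corestriction of scalars, still gives a comodule measuring; Proposition \ref{P7.6vs} then produces a map into $\mathcal Q_{\mathcal M_c(\mathscr O, \mathscr O'')}(\mathscr L, \mathscr L'')$, and composing the two steps gives the required morphism $Comp_k((\mathscr O, \mathscr L), (\mathscr O', \mathscr L')) \otimes Comp_k((\mathscr O', \mathscr L'), (\mathscr O'', \mathscr L'')) \longrightarrow Comp_k((\mathscr O, \mathscr L), (\mathscr O'', \mathscr L''))$ in $Comod_k$.

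I expect the main obstacle to be purely organizational rather than conceptual: keeping the comodule structures aligned when forming $D \otimes D'$ and when performing corestriction of scalars along the coalgebra composition map, and checking the comp-module condition \eqref{7.7ub} for the composite, where the Sweedler legs of the coaction must be tracked with care exactly as in the cocommutativity-driven computation of Lemma \ref{Lem5.8}. Associativity and unitality of the composition are not computational: as in Theorem \ref{T5.9hh}, the universal property of the target $\mathcal Q$ reduces them to the associativity and unitality of the underlying operad enrichment of Proposition \ref{P7.3dq}(b) together with the associativity of ordinary composition of linear maps.
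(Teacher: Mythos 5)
Your proposal is correct and follows essentially the same route as the paper: the paper's own proof constructs the unit maps $k\longrightarrow \mathcal M_c(\mathscr O,\mathscr O)$ and $k\longrightarrow \mathcal Q_{\mathcal M_c(\mathscr O,\mathscr O)}(\mathscr L,\mathscr L)$ via Propositions \ref{P7.3dq} and \ref{P7.6vs} and then simply states that composition of hom-objects ``follows in a manner similar to the proof of Lemma \ref{Lem5.8} and Theorem \ref{T5.9hh}.'' Your write-up merely makes explicit the operad analogue of Lemma \ref{Lem5.8} and the corestriction step that the paper leaves implicit, which is exactly the intended argument.
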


\begin{proof}
We consider  $(\mathscr O,\mathscr L)$, $(\mathscr O',\mathscr L')\in Comp_k$. For the measuring $\{\Phi_n:\mathcal M_c(\mathscr O,\mathscr O')\longrightarrow Vect_k(\mathscr O(n),\mathscr O'(n))\}_{n\geq 0}$ coalgebra, it folllows from Proposition \ref{P7.6vs} that we have 
the universal $(\mathcal M_c(\mathscr O,\mathscr O'),\{\Phi_n\}_{n\geq 0})$-comodule measuring  $ \{\Theta_n:\mathcal Q_{\mathcal M_c(\mathscr O,\mathscr O')}(\mathscr L,\mathscr L')\longrightarrow Vect_k(\mathscr L(n),\mathscr L'(n))\}_{n\geq 0}$. The scalar multiples of the identity map give a morphism $k\longrightarrow \mathcal M_c(\mathscr O,\mathscr O)$ of coalgebras, and using the universal property in Proposition \ref{P7.6vs} a morphism
$k\longrightarrow \mathcal Q_{\mathcal M_c(\mathscr O,\mathscr O')}(\mathscr L,\mathscr L')$. The composition of hom-objects now follows in a manner similar to the proof of Lemma 
\ref{Lem5.8} and Theorem \ref{T5.9hh}.
\end{proof}

Let $(\mathscr O,m,e)$ be a non-$\Sigma$ operad with multiplication and let $\mathscr L$ be a cyclic unital comp module over it.  We now recall from \cite[Proposition 3.5]{Ko4} that the cyclic homology of the pair $(\mathscr O,\mathscr L)$ is obtained from the cyclic module $C_\bullet(\mathscr O,\mathscr L):=\mathscr  L(\bullet)$ whose cyclic operators are $t:\mathscr L(\bullet)\longrightarrow \mathscr L(\bullet)$ and whose face maps and degeneracies are given as follows:
\begin{equation}\label{7.6rf}
d_i(l):=m\bullet_i l, \textrm{ }\mbox{$(0\leq i<n)$}\quad d_n(l):=m\bullet_0t(l)\qquad s_j(l):=e\bullet_{j
+1}l, \textrm{ }\mbox{$0\leq j\leq n$}
\end{equation} for $l\in \mathscr L(n)$. The cyclic homologies of this cyclic module will be denoted by $HC_\bullet(\mathscr O,\mathscr  L)$.  We now have the following result.

\begin{thm}\label{P7.5} Let $\{\Psi_n:C\longrightarrow Vect_k(\mathscr O(n),\mathscr O'(n))\}_{n\geq 0}$ be a measuring of non-$\Sigma$ operads with multiplication. Let $\mathscr L$ and $\mathscr L'$ be cyclic unital comp modules over $\mathscr O$ and $\mathscr O'$ respectively. 
Given a  $(C,\{\Psi_n\}_{n\geq 0})$-comodule measuring $ 
\{\Omega_n:D\longrightarrow Vect_k(\mathscr L(n),\mathscr L'(n))\}_{n\geq 0}$  from $\mathscr L$ to $\mathscr L'$, each $y\in D$ induces a morphism
$\underline\Omega^{cy}_\bullet(y):HC_\bullet(\mathscr O,\mathscr L)\longrightarrow HC_\bullet(\mathscr O',\mathscr L')$ on cyclic homologies.
\end{thm}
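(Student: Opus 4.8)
The plan is to show that for each fixed $y\in D$, the family $\{\Omega_n(y):\mathscr O(n)\text{-side }\mathscr L(n)\longrightarrow\mathscr L'(n)\}_{n\geq 0}$ is a morphism of cyclic modules from $C_\bullet(\mathscr O,\mathscr L)=\mathscr L(\bullet)$ to $C_\bullet(\mathscr O',\mathscr L')=\mathscr L'(\bullet)$; the induced map $\underline\Omega^{cy}_\bullet(y)$ on cyclic homology then follows by functoriality. Unlike the situations treated in Sections 3 and 6, the spaces $\mathscr L(n)$ carry no tensor-product-over-a-ring structure, so each $\Omega_n(y)$ is an honest linear map and no well-definedness check is needed. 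The entire content therefore lies in checking compatibility with the face maps, degeneracies, and cyclic operator recorded in \eqref{7.6rf}, and the recurring device throughout is the counit axiom $\epsilon_C(y_{(0)})y_{(1)}=y$ of the left $C$-comodule $D$, which collapses the scalar factors produced by the measuring.

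First I would treat the face maps $d_i(l)=m\bullet_i l$ for $0\leq i<n$. Since $m\in\mathscr O(2)$, applying the comodule measuring condition \eqref{7.7ub} with $u=m$ and $p=2$ gives $\Omega_{n-1}(y)(m\bullet_i l)=\Psi_2(y_{(0)})(m)\bullet_i\Omega_n(y_{(1)})(l)$. The operad measuring condition $\Psi_2(y_{(0)})(m)=\epsilon_C(y_{(0)})m'$ from \eqref{eq7.2ctn} rewrites this as $\epsilon_C(y_{(0)})\,m'\bullet_i\Omega_n(y_{(1)})(l)$, and the comodule counit axiom then collapses it to $m'\bullet_i\Omega_n(y)(l)=d_i'(\Omega_n(y)(l))$. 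The same three-step pattern dispatches the degeneracies $s_j(l)=e\bullet_{j+1}l$, now using $u=e$, $p=0$, and the condition $\Psi_0(y_{(0)})(e)=\epsilon_C(y_{(0)})e'$, yielding $s_j'(\Omega_n(y)(l))$.

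The remaining face map $d_n(l)=m\bullet_0 t(l)$ is the one place where both axioms of a comodule measuring intervene simultaneously, and I expect it to be the main (though still routine) obstacle. Here I would again invoke \eqref{7.7ub} with $u=m$ to pull $m$ outside, obtaining $\epsilon_C(y_{(0)})\,m'\bullet_0\Omega_n(y_{(1)})(t(l))$, then apply the cyclic compatibility \eqref{7.5rf}, namely $\Omega_n(y_{(1)})(t(l))=t'(\Omega_n(y_{(1)})(l))$, and finally absorb the scalar $\epsilon_C(y_{(0)})$ via the counit axiom to reach $m'\bullet_0 t'(\Omega_n(y)(l))=d_n'(\Omega_n(y)(l))$. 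Compatibility with the cyclic operator itself is immediate, being precisely the condition \eqref{7.5rf}. Having verified all the structure maps, $\{\Omega_n(y)\}_{n\geq 0}$ is a morphism of cyclic modules, and hence induces the desired morphism $\underline\Omega^{cy}_\bullet(y):HC_\bullet(\mathscr O,\mathscr L)\longrightarrow HC_\bullet(\mathscr O',\mathscr L')$ on cyclic homologies.
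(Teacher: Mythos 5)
Your proposal is correct and takes essentially the same route as the paper's own (brief) proof: the paper likewise handles the cyclic operator directly via \eqref{7.5rf} and disposes of the faces and degeneracies by combining \eqref{7.7ub}, the normalization conditions $\Psi_2(x)(m)=\epsilon_C(x)m'$ and $\Psi_0(x)(e)=\epsilon_C(x)e'$ from \eqref{eq7.2ctn}, and the formulas \eqref{7.6rf}. The only difference is that you spell out the counit collapse $\epsilon_C(y_{(0)})y_{(1)}=y$ (including for $d_n(l)=m\bullet_0 t(l)$), which the paper leaves as ``easy to see.''
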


\begin{proof}
We know from \eqref{7.5rf} that the action of any $y\in D$ commutes with the cyclic operators. We know from \eqref{eq7.2ctn} that
$
x_2(m)=\Psi_2(x)(m)=\epsilon_C(x) m' $ and $ x_0(e)=\Psi_0(x)(e)=\epsilon_C(x)e'
$ for any $x\in C$. From the conditions in 
\eqref{7.7ub}  and the definitions in \eqref{7.6rf}, it is now easy to see that the action of of any $y\in D$ also commutes with the face maps and degeneracies. The result is now clear. 
\end{proof}

We conclude by studying measurings of Yetter Drinfeld algebras, which lead to a measuring of operads with multiplication, inducing morphisms in cyclic homology. We fix a Hopf algebroid
$\mathcal U=(U,A_L,s_L,t_L,\Delta_L,\epsilon_L,S)$.  In a manner similar to Definition \ref{D5.3}, a (left-left) Yetter Drinfeld module $P$ over $\mathcal U$ carries a left $U$-module and a left $U$-comodule structure:
\begin{equation}\label{713ij}
(u,p)\mapsto up \qquad p\mapsto p_{(-1)}\otimes p_{(0)} \qquad u\in U,\textrm{ }p\in P
\end{equation} satisfying compatibility conditions  for the   underlying $A^e$-module structure, as well as the $U$-action and $U$-coaction (see, for instance, \cite{Boh}, \cite[$\S$ 6.3.2]{Ko4}, 
\cite{Sch2}). The category ${^{\mathcal U}_{\mathcal U}}YD$
 of left-left Yetter Drinfeld modules is a braided monoidal category. A braided commutative Yetter-Drinfeld algebra  is a braided commutative monoid in ${^{\mathcal U}_{\mathcal U}}YD$ (see, for instance,
 \cite{BrzMi}, \cite[$\S$ 6.3.2]{Ko4}). For a  braided commutative Yetter-Drinfeld algebra $Z$ over the  Hopf algebroid $\mathcal U=(U,A_L,s_L,t_L,\Delta_L,\epsilon_L,S)$, the family 
 \begin{equation}\label{714rf}
 C^\bullet(\mathcal U,Z):=Hom_{A^{op}}(({_\blacktriangleright}U_\triangleleft)^{\otimes_{A^{op}}\bullet},Z)
 \end{equation} is a non-$\Sigma$ operad with multiplication (see \cite[$\S$ 6.3.3]{Ko4}). The structure maps are given by 
 \begin{equation}\label{715rf}
 \begin{array}{c}
 \circ_i: C^p(\mathcal U,Z)\otimes C^q(\mathcal U,Z)\longrightarrow C^{p+q-1}(\mathcal U,Z)\\
 (f\circ_ig)(u^1\otimes ...\otimes u^{p+q-1}):=f(u^1_{(1)}\otimes ...\otimes u^{p-i}_{(1)}\otimes [g(u^{p-i+1}_{(1)}\otimes ...\otimes u^{p+q-i}_{(1)})]_{(-1)}u_{(2)}^{p-i+1}...u_{(2)}^{p+q-i}\otimes u^{p+q-i+1}\otimes ...\otimes u^{p+q-1})\\
 \cdot_Z (u^1_{(2)}...u_{(2)}^{p-i}[g(u^{p-i+1}_{(1)}\otimes ...\otimes u^{p+q-i}_{(1)})]_{(0)})\\
 \end{array}
 \end{equation} where $\cdot_Z$ is the multiplication in $Z$. The multiplication $m\in C^2(\mathcal U,Z)$, the identity $\mathbbm{1}\in C^1(\mathcal U,Z)$ and the unit $e\in C^0(\mathcal U,Z)$ are given respectively by
 \begin{equation}\label{716rf}
 m:=\epsilon_L(\mu_U(\_\_\otimes \_\_))\triangleright 1_Z \qquad \mathbbm{1}:=\epsilon_L(\_\_)\triangleright 1_Z \qquad e:=1_Z
 \end{equation} where $\mu_U$ is the multiplication on $U$ and $1_Z$ is the identity in the Yetter Drinfeld algebra $Z$. If $Z$ and $Z'$ are braided commutative Yetter Drinfeld algebras over 
 $\mathcal U$, we say that a measuring from $Z$ to $Z'$ consists of a cocommutative coalgebra $C$ and a map $\psi: C\longrightarrow Vect_k(Z,Z')$, $\psi(x):=x(\_\_):Z\longrightarrow Z'$ satisfying the following conditions
 \begin{equation}\label{ms718}
x(z_1\cdot_Zz_2):=x_{(1)}(z_1)\cdot_{Z'}x_{(2)}(z_2)\qquad x(1_Z)=\epsilon_C(x)1_{Z'}\qquad x(uz)=ux(z)\qquad x(z)_{(-1)}\otimes x(z)_{(0)}=z_{(-1)}\otimes x(z_{(0)})
 \end{equation} for $x\in C$, $u\in U$ and $z_1$, $z_2$, $z\in Z$. 

\begin{lem}\label{L7.9uc}
Let $\mathcal U=(U,A_L,s_L,t_L,\Delta_L,\epsilon_L,S)$ be a Hopf algebroid. Let  $\psi:C\longrightarrow  Vect_k(Z,Z')$ be a measuring between braided commutative Yetter Drinfeld algebras over $\mathcal U$.  Then, $\psi$ induces a measuring $\{\Psi_n:C\longrightarrow Vect_k(C^n(\mathcal U,Z),C^n(\mathcal U,Z'))\}_{n\geq 0}$ of operads with multiplication.
\end{lem}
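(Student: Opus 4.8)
The plan is to define the induced measuring by postcomposition with $\psi(x)$ and then to check the three defining conditions of Definition \ref{D7.11h}. Concretely, for $x\in C$ and $f\in C^n(\mathcal U,Z)=Hom_{A^{op}}(({_\blacktriangleright}U_\triangleleft)^{\otimes_{A^{op}} n},Z)$, I would set $\Psi_n(x)(f):=\psi(x)\circ f$, that is, $\Psi_n(x)(f)(u^1\otimes\dots\otimes u^n)=x(f(u^1\otimes\dots\otimes u^n))$. The first point to verify is that this lands in $C^n(\mathcal U,Z')$, i.e.\ that $\psi(x)\circ f$ is again $A^{op}$-linear. Since the underlying $A^e$-module structure on $Z$ (and on $Z'$) is induced by the $U$-action, the condition $x(uz)=ux(z)$ in \eqref{ms718} shows that $\psi(x)\colon Z\longrightarrow Z'$ is $A^e$-linear, so $\Psi_n(x)(f)$ is well defined; linearity of $\Psi_n$ in $x$ is immediate from the linearity of $\psi$.

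The two elementary conditions in \eqref{eq7.2ctn} come first. For the unit $e=1_Z$ of \eqref{716rf} one has $\Psi_0(x)(e)=x(1_Z)=\epsilon_C(x)1_{Z'}=\epsilon_C(x)e'$ by the second identity in \eqref{ms718}. For the multiplication $m=\epsilon_L(\mu_U(\_\_\otimes\_\_))\triangleright 1_Z$, evaluating on $u^1\otimes u^2$ and using that the action $\triangleright$ is induced by the $U$-module structure (so that $\psi(x)$ commutes with $\triangleright$ by $x(uz)=ux(z)$) together with $x(1_Z)=\epsilon_C(x)1_{Z'}$ gives $x(\epsilon_L(u^1u^2)\triangleright 1_Z)=\epsilon_C(x)(\epsilon_L(u^1u^2)\triangleright 1_{Z'})$, which is exactly $\epsilon_C(x)m'(u^1\otimes u^2)$, whence $\Psi_2(x)(m)=\epsilon_C(x)m'$.

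The main step, and the only delicate one, is compatibility with the comp operations $\circ_i$ of \eqref{715rf}. Writing $w:=g(u^{p-i+1}_{(1)}\otimes\dots\otimes u^{p+q-i}_{(1)})\in Z$, the element $(f\circ_i g)(u^1\otimes\dots\otimes u^{p+q-1})$ is a braided product $P\cdot_Z Q$, where $P$ is the value of $f$ on a tuple containing the coaction leg $w_{(-1)}$ and $Q=(u^1_{(2)}\cdots u^{p-i}_{(2)})\,w_{(0)}$ is a $U$-action applied to $w_{(0)}$. Applying $x$ and using the multiplicativity $x(z_1\cdot_Z z_2)=x_{(1)}(z_1)\cdot_{Z'}x_{(2)}(z_2)$ from \eqref{ms718} splits this as $x_{(1)}(P)\cdot_{Z'}x_{(2)}(Q)$; the $U$-equivariance $x(uz)=ux(z)$ lets the whole $U$-factor of $Q$ pass through $x_{(2)}$, so that $x_{(2)}(Q)=(u^1_{(2)}\cdots u^{p-i}_{(2)})\,x_{(2)}(w_{(0)})$. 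On the other side, $\Psi_p(x_{(1)})(f)\circ'_i\Psi_q(x_{(2)})(g)$ involves the $Z'$-coaction of $w':=x_{(2)}(w)$, and the comodule compatibility $x(z)_{(-1)}\otimes x(z)_{(0)}=z_{(-1)}\otimes x(z_{(0)})$ from \eqref{ms718}, applied with $x_{(2)}$ and $z=w$, yields $w'_{(-1)}\otimes w'_{(0)}=w_{(-1)}\otimes x_{(2)}(w_{(0)})$. Substituting this identity matches the two expressions term by term, establishing $\Psi_{p+q-1}(x)(f\circ_i g)=\Psi_p(x_{(1)})(f)\circ'_i\Psi_q(x_{(2)})(g)$.

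I expect the bookkeeping in this last step to be the principal obstacle: one must track precisely which Sweedler leg of the $C$-coproduct acts on the $f$-factor versus the $g$-factor and apply the comodule-compatibility condition at exactly the leg $x_{(2)}(w_{(0)})$, so that the $Z$-coaction of $w$ is converted into the $Z'$-coaction of $x_{(2)}(w)$. I note that the $U$-coproducts $u^j_{(1)},u^j_{(2)}$ appearing in \eqref{715rf} are untouched by $\psi$, so no interaction between $\Delta_L$ and $\Delta_C$ intervenes; the only structural inputs are the three properties of $\psi$ in \eqref{ms718}, namely multiplicativity, $U$-equivariance, and $U$-colinearity.
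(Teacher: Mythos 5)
Your proposal is correct and follows essentially the same route as the paper's proof: define $\Psi_n(x)(f)=\psi(x)\circ f$, verify $\Psi_2(x)(m)=\epsilon_C(x)m'$ and $\Psi_0(x)(e)=\epsilon_C(x)e'$ directly from \eqref{ms718}, and establish compatibility with $\circ_i$ by applying multiplicativity to split $x$ over the braided product, $U$-equivariance to pass the factor $u^1_{(2)}\cdots u^{p-i}_{(2)}$ through $x_{(2)}$, and colinearity at exactly the leg $w_{(-1)}\otimes x_{(2)}(w_{(0)})=x_{(2)}(w)_{(-1)}\otimes x_{(2)}(w)_{(0)}$ to convert the $Z$-coaction of $g(\cdots)$ into the $Z'$-coaction of $(x_{(2)}g)(\cdots)$, which is the same three-step sequence the paper carries out. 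Your additional remark that $\psi(x)$ is $A^{op}$-linear (via $x(uz)=ux(z)$ and the $A^e$-structure induced by $\eta_L$), so that $\Psi_n(x)(f)$ indeed lands in $C^n(\mathcal U,Z')$, is a point the paper leaves implicit and is correctly handled.
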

\begin{proof}
We define
\begin{equation}
\Psi_n:C\longrightarrow Vect_k(C^n(\mathcal U,Z),C^n(\mathcal U,Z'))\qquad x\mapsto (f\mapsto \psi(x)\circ f=xf)
\end{equation} for each $n\geq 0$. For $f\in C^p(\mathcal U,Z)$, $g\in C^q(\mathcal U,Z)$, we now have for each $x\in C$:
\begin{equation}
\begin{array}{l}
\Psi_{p+q-1}(x)(f\circ_ig)(u^1\otimes ...\otimes u^{p+q-1})\\
=x\left(
\begin{array}{l}f(u^1_{(1)}\otimes ...\otimes u^{p-i}_{(1)}\otimes [g(u^{p-i+1}_{(1)}\otimes ...\otimes u^{p+q-i}_{(1)})]_{(-1)}u_{(2)}^{p-i+1}...u_{(2)}^{p+q-i}\otimes u^{p+q-i+1}\otimes ...\otimes u^{p+q-1})\\
 \cdot_Z (u^1_{(2)}...u_{(2)}^{p-i}[g(u^{p-i+1}_{(1)}\otimes ...\otimes u^{p+q-i}_{(1)})]_{(0)})\\
\end{array}\right)\\
=\begin{array}{l} x_{(1)}(f(u^1_{(1)}\otimes ...\otimes u^{p-i}_{(1)}\otimes [g(u^{p-i+1}_{(1)}\otimes ...\otimes u^{p+q-i}_{(1)})]_{(-1)}u_{(2)}^{p-i+1}...u_{(2)}^{p+q-i}\otimes u^{p+q-i+1}\otimes ...\otimes u^{p+q-1}))\\
\cdot_{Z'} x_{(2)}(u^1_{(2)}...u_{(2)}^{p-i}[g(u^{p-i+1}_{(1)}\otimes ...\otimes u^{p+q-i}_{(1)})]_{(0)})\\
\end{array} \\
=\begin{array}{l} x_{(1)}(f(u^1_{(1)}\otimes ...\otimes u^{p-i}_{(1)}\otimes [g(u^{p-i+1}_{(1)}\otimes ...\otimes u^{p+q-i}_{(1)})]_{(-1)}u_{(2)}^{p-i+1}...u_{(2)}^{p+q-i}\otimes u^{p+q-i+1}\otimes ...\otimes u^{p+q-1}))\\
\cdot_{Z'} (u^1_{(2)}...u_{(2)}^{p-i}x_{(2)}([g(u^{p-i+1}_{(1)}\otimes ...\otimes u^{p+q-i}_{(1)})]_{(0)}))\\
\end{array} \\
=\begin{array}{l} x_{(1)}(f(u^1_{(1)}\otimes ...\otimes u^{p-i}_{(1)}\otimes [(x_{(2)}g)(u^{p-i+1}_{(1)}\otimes ...\otimes u^{p+q-i}_{(1)})]_{(-1)}u_{(2)}^{p-i+1}...u_{(2)}^{p+q-i}\otimes u^{p+q-i+1}\otimes ...\otimes u^{p+q-1}))\\
\cdot_{Z'} (u^1_{(2)}...u_{(2)}^{p-i} [(x_{(2)}g)(u^{p-i+1}_{(1)}\otimes ...\otimes u^{p+q-i}_{(1)})]_{(0)})\\
\end{array} \\
=\begin{array}{l} (x_{(1)}f)(u^1_{(1)}\otimes ...\otimes u^{p-i}_{(1)}\otimes [(x_{(2)}g)(u^{p-i+1}_{(1)}\otimes ...\otimes u^{p+q-i}_{(1)})]_{(-1)}u_{(2)}^{p-i+1}...u_{(2)}^{p+q-i}\otimes u^{p+q-i+1}\otimes ...\otimes u^{p+q-1})\\
\cdot_{Z'} (u^1_{(2)}...u_{(2)}^{p-i} [(x_{(2)}g)(u^{p-i+1}_{(1)}\otimes ...\otimes u^{p+q-i}_{(1)})]_{(0)})\\
\end{array} \\
=(\Psi_p(x_{(1)})(f))\circ_i\Psi_q(x_{(2)})(g))(u^1\otimes ...\otimes u^{p+q-1})\\
\end{array}
\end{equation} Let $m$ (resp. $m'$) and $e$ (resp. $e'$) be respectively the multiplication and unit on $C^\bullet(\mathcal U,Z)$ (resp. $C^\bullet(\mathcal U,Z')$) as in \eqref{716rf}. Then, we see that
\begin{equation}
\begin{array}{c}
\Psi_2(x)(m)(u^1\otimes u^2)=x(\epsilon_L(\mu_U(u^1\otimes u^2))\triangleright 1_Z)=\epsilon_L(\mu_U(u^1\otimes u^2))\triangleright \epsilon_C(x)1_{Z'}=\epsilon_C(x)m'(u^1\otimes u^2)\\ \Psi_0(x)(e)=x(1_Z)=\epsilon_C(x)1_{Z'}=\epsilon_
C(x)(e')  \\
\end{array}
\end{equation} This proves the result.
\end{proof}

On the other hand, if $L\in {^{\mathcal U}}aYD_{\mathcal U}$ is an anti-Yetter Drinfeld module and $Z$ is a braided commutative Yetter Drinfeld algebra, it follows from \cite[Lemma 6.1]{Ko4} that $L\otimes_{A^{op}}Z$ also carries a canonical anti-Yetter Drinfeld module structure. We now consider as in \cite[$\S$ 6.3.4]{Ko4}:
\begin{equation}\label{721op}
C_\bullet(\mathcal U,L\otimes_{A^{op}}Z):={_\blacktriangleright}({_\blacktriangleright}L\otimes_{A^{op}}Z_\triangleleft)\otimes_{A^{op}} {_\blacktriangleright}U_\triangleleft^{\otimes_{A^{op}}\bullet}
\end{equation} Additionally, if $ L\otimes_{A^{op}}Z$ is a stable anti-Yetter Drinfeld module, then $C_\bullet(\mathcal U,L\otimes_{A^{op}}Z)$ becomes a cyclic unital comp module over
$C^\bullet(\mathcal U,Z)$ with the following composition maps and cyclic operators:
\begin{equation}\label{72215}
\begin{array}{lll}
f\bullet_i(l\otimes z\otimes u^1\otimes ...\otimes u^k)&=l\otimes (u^1_{(2)}...u^{k-p-i+1}_{(2)}[f(u_{(1)}^{k-p-i+2}\otimes ...\otimes u_{(1)}^{k-i+1})]_{(0)})\cdot_Zz\otimes u^1_{(1)}\otimes ...\otimes u_{(1)}^{k-p-i+1}\otimes & \textrm{ }\mbox{$(i>0)$} \\
&\qquad [f(u_{(1)}^{k-p-i+2}\otimes ...\otimes u_{(1)}^{k-i+1})]_{(-1)})u^{k-p-i+2}_{(2)}...u_{(2)}^{k-i+1}\otimes u^{k-i+2}\otimes ... \otimes u^k & \\
f\bullet_0(l\otimes z\otimes u^1\otimes ...\otimes u^k)&=l_{(0)}\otimes ((u^1_{+(2)} ...  u^{k-p+1}_{+(2)}f(u_+^{k-p+2}\otimes ...\otimes u^k_+\otimes (u^k_-...u^1_-z_{(-1)}l_{(-1)}) ))\cdot_Zz_{(0)}) \otimes &\\
&\qquad u^1_{+(1)}\otimes ...\otimes u^{k-p+1}_{+(1)}&\\
t(l\otimes z\otimes u^1\otimes ...\otimes u^k)&=l_{(0)}u^1_{++}\otimes u^1_{+-}z_{(0)}\otimes u^2_+\otimes ...\otimes u^k_+\otimes u^k_-...u^1_-z_{(-1)}l_{(-1)}&\\
\end{array}
\end{equation} for $f\in C^p(\mathcal U,Z)$ and $l\otimes z\otimes u^1\otimes ...\otimes u^k\in C_k( \mathcal U,L\otimes_{A^{op}}Z)$. The cyclic homology of the corresponding cyclic module as in 
\eqref{7.6rf} will be denoted by $HC_\bullet(\mathcal U,Z,L)$. 

\smallskip Now let $h:L\longrightarrow L'$ be a morphism in ${^{\mathcal U}}aYD_{\mathcal U}$ and $\psi:C\longrightarrow  Vect_k(Z,Z')$ be a measuring of braided commutative Yetter Drinfeld algebras over $\mathcal U$. Accordingly, we have induced maps
\begin{equation}\label{722eop}
\begin{array}{c}
(h,\psi(x))_\bullet:C_\bullet(\mathcal U,L\otimes_{A^{op}}Z)={_\blacktriangleright}({_\blacktriangleright}L\otimes_{A^{op}}Z_\triangleleft)\otimes_{A^{op}} {_\blacktriangleright}U_\triangleleft^{\otimes_{A^{op}}\bullet}\longrightarrow {_\blacktriangleright}({_\blacktriangleright}L'\otimes_{A^{op}}Z'_\triangleleft)\otimes_{A^{op}} {_\blacktriangleright}U_\triangleleft^{\otimes_{A^{op}}\bullet}= C_\bullet(\mathcal U,L'\otimes_{A^{op}}Z')\\
l\otimes z\otimes u^1\otimes ...\otimes u^k\mapsto h(l)\otimes \psi(x)(z)\otimes u^1\otimes ...\otimes u^k\\
\end{array}
\end{equation} for each $x\in C$.  We end with the following result.

\begin{thm}\label{P7.9uc}
Let $\mathcal U=(U,A_L,s_L,t_L,\Delta_L,\epsilon_L,S)$ be a Hopf algebroid. Let  $\psi:C\longrightarrow  Vect_k(Z,Z')$ be a measuring between braided commutative Yetter Drinfeld algebras over $\mathcal U$.  Let $h:L\longrightarrow L'$ be a morphism in ${^{\mathcal U}}aYD_{\mathcal U}$. Suppose  that $L\otimes_{A^{op}}Z$ and $L'\otimes_{A^{op}}Z'$ are stable. Then, the induced morphisms:
\begin{equation}
\begin{array}{c}\Psi_\bullet:C\longrightarrow Vect_k(C^\bullet(\mathcal U,Z),C^\bullet(\mathcal U,Z')) \qquad  x\mapsto (f\mapsto \psi(x)\circ f)
\\
(h,\psi)_\bullet:C\longrightarrow Vect_k(C_\bullet(\mathcal U,L\otimes_{A^{op}}Z) , C_\bullet(\mathcal U,L'\otimes_{A^{op}}Z'))\qquad  x\mapsto (h,\psi(x))_\bullet 
\end{array}
\end{equation} give a comodule measuring of cyclic unital comp modules. In particular, each $x\in C$ induces a morphism $(h,\psi(x))_\bullet^{cy}:HC_\bullet(\mathcal U,Z,L)\longrightarrow HC_\bullet(\mathcal U,Z',L')$ on cyclic
homologies.
\end{thm}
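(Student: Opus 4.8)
The plan is to leverage Lemma \ref{L7.9uc}, which already provides that $\Psi_\bullet$ is a measuring of non-$\Sigma$ operads with multiplication from $C^\bullet(\mathcal U,Z)$ to $C^\bullet(\mathcal U,Z')$. It therefore remains only to verify that the family $\{(h,\psi(x))_\bullet\}$ of \eqref{722eop}, together with $\{\Psi_n\}$ and with $D:=C$ regarded as a $C$-comodule via its coproduct $\Delta_C$, satisfies the two conditions \eqref{7.7ub} and \eqref{7.5rf} of Definition \ref{D7.3}. Once this is done, Proposition \ref{P7.5} applied to this comodule measuring immediately yields the induced morphism $(h,\psi(x))^{cy}_\bullet:HC_\bullet(\mathcal U,Z,L)\longrightarrow HC_\bullet(\mathcal U,Z',L')$ on cyclic homology, completing the proof. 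Throughout, the guiding principle is that $(h,\psi(x))_\bullet$ acts by $h$ on the $L$-slot and $\psi(x)$ on the $Z$-slot while leaving the $U$-tensor factors untouched, so the verifications reduce to pushing $h$ and $\psi(x)$ through the structure maps in \eqref{72215}.

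First I would dispose of well-definedness and the cyclic-operator condition \eqref{7.5rf}, as these require no splitting of $x$. The map $(h,\psi(x))_\bullet$ descends to the $\otimes_{A^{op}}$-tensor products because $h$ is $A^{op}$-linear (being a morphism in ${}^{\mathcal U}aYD_{\mathcal U}$), $\psi$ is compatible with the $A^{op}$-action on $Z$, and the $U$-factors are carried identically. For \eqref{7.5rf} I would apply $(h,\psi(x))_\bullet$ to the cyclic operator $t$ in \eqref{72215} and compare with $t'\circ(h,\psi(x))_\bullet$. Since $t$ involves no $\cdot_Z$-multiplication, the only inputs are the morphism relations $h(l_{(0)}u^1_{++})=h(l_{(0)})u^1_{++}$ and $h(l)_{(-1)}\otimes h(l)_{(0)}=l_{(-1)}\otimes h(l_{(0)})$, together with the measuring properties $\psi(x)(uz)=u\,\psi(x)(z)$ and $\psi(x)(z)_{(-1)}\otimes \psi(x)(z)_{(0)}=z_{(-1)}\otimes \psi(x)(z_{(0)})$ from \eqref{ms718}. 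Because these hold for the full element $x$ (not a coproduct leg), the two sides agree slot by slot; in particular the trailing $U$-entry $u^k_-\cdots u^1_- z_{(-1)}l_{(-1)}$ matches precisely because $\psi(x)$ and $h$ leave the $(-1)$-legs unchanged.

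Next I would treat the comp-composition condition \eqref{7.7ub}, namely $(h,\psi(x))(f\bullet_i\omega)=\Psi_p(x_{(1)})(f)\bullet_i(h,\psi(x_{(2)}))(\omega)$ for $\omega=l\otimes z\otimes u^1\otimes\cdots\otimes u^k$, splitting into the cases $i>0$ and $i=0$ of \eqref{72215}. In each case I apply $(h,\psi(x))_\bullet$ and push $\psi(x)$ through the $Z$-valued output using the three identities of \eqref{ms718}: multiplicativity $\psi(x)(z_1\cdot_Z z_2)=\psi(x_{(1)})(z_1)\cdot_{Z'}\psi(x_{(2)})(z_2)$ to separate the factor built from $f$ from the factor $z$, the $U$-action compatibility to absorb the $u^j_{(2)}$ prefactors, and the coaction compatibility for the terms involving $[f(\cdots)]_{(-1)}$ and $[f(\cdots)]_{(0)}$. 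The decisive identity is that, writing $\widetilde f:=\Psi_p(x_{(1)})(f)=\psi(x_{(1)})\circ f$, the coaction compatibility gives $[\widetilde f(\vec u)]_{(-1)}=[f(\vec u)]_{(-1)}$ and $[\widetilde f(\vec u)]_{(0)}=\psi(x_{(1)})([f(\vec u)]_{(0)})$, so the $U$-tensor slot fed by $[f(\cdots)]_{(-1)}$ (which $(h,\psi(x))_\bullet$ does not alter) coincides with the corresponding slot on the right-hand side. Cocommutativity of $C$ is then used to relabel the coproduct legs so that the slot acted on by $f$ is evaluated against $\psi(x_{(1)})$ and the remaining factor $z$ against $\psi(x_{(2)})$, matching $\Psi_p(x_{(1)})(f)\bullet_i(h,\psi(x_{(2)}))(\omega)$; the $i=0$ case additionally uses $h(l_{(0)})=h(l)_{(0)}$, $h(l)_{(-1)}=l_{(-1)}$ and the analogous statements for $z$.

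The main obstacle I anticipate is purely one of bookkeeping in the $i=0$ composition and the cyclic operator, where the inverse Hopf-Galois translations $u_\pm$ (and the doubly translated $u_{++},u_{+-}$), the coactions $z_{(-1)},l_{(-1)}$, and the $Z$-multiplication all appear simultaneously. The subtle point is to confirm that the coaction property of $\psi$ threads correctly through these nested $u_\pm$-expressions and, crucially, that the $U$-tensor entries left fixed by $(h,\psi(x))_\bullet$ genuinely agree on both sides --- which holds precisely because a measuring of Yetter-Drinfeld algebras preserves the $(-1)$-leg of the left $U$-coaction. Once \eqref{7.7ub} and \eqref{7.5rf} are established, the final assertion about cyclic homology follows at once from Proposition \ref{P7.5}.
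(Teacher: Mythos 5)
Your proposal is correct and follows essentially the same route as the paper's proof: a direct verification of conditions \eqref{7.7ub} and \eqref{7.5rf} of Definition \ref{D7.3} (with $C$ viewed as a comodule over itself, split into the $i>0$, $i=0$ and cyclic-operator cases) using the identities \eqref{ms718} and the fact that $h$ intertwines the $\mathcal U$-action and coaction, followed by an appeal to Proposition \ref{P7.5}. In particular, your key observation that $[\Psi_p(x_{(1)})(f)(\vec u)]_{(-1)}\otimes[\Psi_p(x_{(1)})(f)(\vec u)]_{(0)}=[f(\vec u)]_{(-1)}\otimes\psi(x_{(1)})([f(\vec u)]_{(0)})$ is exactly the substitution step driving the paper's computations (your invocation of cocommutativity to relabel legs is harmless but not actually needed, as the coproduct legs already appear in the correct order).
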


\begin{proof} For $x\in C$, $f\in C^p(\mathcal U,Z)$, $l\otimes z\otimes u^1\otimes ...\otimes u^k\in C_k( U,L\otimes_{A^{op}}Z)$ and $i>0$, we see that
\begin{equation*}
\begin{array}{l}
(h,\psi(x))_{k-p+1}(f\bullet_i(l\otimes z\otimes u^1\otimes ...\otimes u^k))\\
=h(l)\otimes \psi(x)((u^1_{(2)}...u^{k-p-i+1}_{(2)}[f(u_{(1)}^{k-p-i+2}\otimes ...\otimes u_{(1)}^{k-i+1})]_{(0)})\cdot_Zz)\otimes u^1_{(1)}\otimes ...\otimes u_{(1)}^{k-p-i+1}\otimes \\
\qquad [f(u_{(1)}^{k-p-i+2}\otimes ...\otimes u_{(1)}^{k-i+1})]_{(-1)})u^{k-p-i+2}_{(2)}...u_{(2)}^{k-i+1}\otimes u^{k-i+2}\otimes ... \otimes u^k  \\
=h(l)\otimes x_{(1)}(u^1_{(2)}...u^{k-p-i+1}_{(2)}[f(u_{(1)}^{k-p-i+2}\otimes ...\otimes u_{(1)}^{k-i+1})]_{(0)}))\cdot_{Z'}x_{(2)}(z)\otimes u^1_{(1)}\otimes ...\otimes u_{(1)}^{k-p-i+1}\otimes \\
\qquad [f(u_{(1)}^{k-p-i+2}\otimes ...\otimes u_{(1)}^{k-i+1})]_{(-1)})u^{k-p-i+2}_{(2)}...u_{(2)}^{k-i+1}\otimes u^{k-i+2}\otimes ... \otimes u^k  \\
=h(l)\otimes  (u^1_{(2)}...u^{k-p-i+1}_{(2)}x_{(1)}([f(u_{(1)}^{k-p-i+2}\otimes ...\otimes u_{(1)}^{k-i+1})]_{(0)})))\cdot_{Z'}x_{(2)}(z)\otimes u^1_{(1)}\otimes ...\otimes u_{(1)}^{k-p-i+1}\otimes \\
\qquad [f(u_{(1)}^{k-p-i+2}\otimes ...\otimes u_{(1)}^{k-i+1})]_{(-1)})u^{k-p-i+2}_{(2)}...u_{(2)}^{k-i+1}\otimes u^{k-i+2}\otimes ... \otimes u^k  \\
=h(l)\otimes  (u^1_{(2)}...u^{k-p-i+1}_{(2)}[x_{(1)}f(u_{(1)}^{k-p-i+2}\otimes ...\otimes u_{(1)}^{k-i+1})]_{(0)}))\cdot_{Z'}x_{(2)}(z)\otimes u^1_{(1)}\otimes ...\otimes u_{(1)}^{k-p-i+1}\otimes \\
\qquad [x_{(1)}f(u_{(1)}^{k-p-i+2}\otimes ...\otimes u_{(1)}^{k-i+1})]_{(-1)})u^{k-p-i+2}_{(2)}...u_{(2)}^{k-i+1}\otimes u^{k-i+2}\otimes ... \otimes u^k  \\
=x_{(1)}f\bullet_i(h(l)\otimes x_{(2)}(z)\otimes u^1\otimes ...\otimes u^k)=x_{(1)}f\bullet_i (h,\psi(x_{(2)}))_{k}(l\otimes z\otimes u^1\otimes ...\otimes u^k)\\
\end{array}
\end{equation*} We notice that since $h:L\longrightarrow L'$ is a morphism in ${^{\mathcal U}}aYD_{\mathcal U}$, we must have $h(l)_{(-1)}\otimes h(l)_{(0)}=l_{(-1)}\otimes 
h(l_{(0)})$ for any $l\in L$. Using this fact, we now see that for $f\in C^p(\mathcal U,Z)$, $l\otimes z\otimes u^1\otimes ...\otimes u^k\in C_k(\mathcal U,L\otimes_{A^{op}}Z)$ and 
any $x\in C$, we have
\begin{equation*}
\begin{array}{l}
(h,\psi(x))_{k-p+1}(f\bullet_0(l\otimes z\otimes u^1\otimes ...\otimes u^k))\\
=h(l_{(0)})\otimes \psi(x)((u^1_{+(2)} ...  u^{k-p+1}_{+(2)}f(u_+^{k-p+2}\otimes ...\otimes u^k_+\otimes (u^k_-...u^1_-z_{(-1)}l_{(-1)}) ))\cdot_Zz_{(0)}) \otimes   u^1_{+(1)}\otimes ...\otimes u^{k-p+1}_{+(1)}\\
=h(l_{(0)})\otimes (x_{(1)}(u^1_{+(2)} ...  u^{k-p+1}_{+(2)}f(u_+^{k-p+2}\otimes ...\otimes u^k_+\otimes (u^k_-...u^1_-z_{(-1)}l_{(-1)}) ))\cdot_{Z'}x_{(2)}(z_{(0)})) \otimes   u^1_{+(1)}\otimes ...\otimes u^{k-p+1}_{+(1)}\\
=h(l)_{(0)}\otimes ((u^1_{+(2)} ...  u^{k-p+1}_{+(2)}x_{(1)}f(u_+^{k-p+2}\otimes ...\otimes u^k_+\otimes (u^k_-...u^1_-z_{(-1)}h(l)_{(-1)}) ))\cdot_{Z'}x_{(2)}(z_{(0)})) \otimes  u^1_{+(1)}\otimes ...\otimes u^{k-p+1}_{+(1)}\\
=h(l)_{(0)}\otimes ((u^1_{+(2)} ...  u^{k-p+1}_{+(2)}x_{(1)}f(u_+^{k-p+2}\otimes ...\otimes u^k_+\otimes (u^k_-...u^1_-x_{(2)}(z)_{(-1)}h(l)_{(-1)}) ))\cdot_{Z'}x_{(2)}(z)_{(0)}) \otimes   u^1_{+(1)}\otimes ...\otimes u^{k-p+1}_{+(1)}\\
=x_{(1)}f\bullet_0(h(l)\otimes x_{(2)}(z)\otimes u^1\otimes ...\otimes u^k)=x_{(1)}f\bullet_0 (h,\psi(x_{(2)}))_{k}(l\otimes z\otimes u^1\otimes ...\otimes u^k)\\
\end{array}
\end{equation*} Finally, we verify that the action of the maps $(h,\psi(x))_\bullet$ commutes with the cyclic operators, i.e., for any $l\otimes z\otimes u^1\otimes ...\otimes u^k\in C_k( \mathcal U,L\otimes_{A^{op}}Z)$, we have
\begin{equation*}
\begin{array}{ll}
(h,\psi(x))_k(t(l\otimes z\otimes u^1\otimes ...\otimes u^k))&=h(l_{(0)}u^1_{++})\otimes \psi(x)(u^1_{+-}z_{(0)})\otimes u^2_+\otimes ...\otimes u^k_+\otimes u^k_-...u^1_-z_{(-1)}l_{(-1)}\\
&=h(l_{(0)})u^1_{++}\otimes  (u^1_{+-}\psi(x)(z_{(0)}))\otimes u^2_+\otimes ...\otimes u^k_+\otimes u^k_-...u^1_-z_{(-1)}l_{(-1)}\\
&=h(l)_{(0)}u^1_{++}\otimes  (u^1_{+-}\psi(x)(z)_{(0)})\otimes u^2_+\otimes ...\otimes u^k_+\otimes u^k_-...u^1_-\psi(x)(z)_{(-1)}h(l)_{(-1)}\\
\end{array}
\end{equation*} It now follows that the collection $(h,\psi(x))_\bullet$ defines a measuring from $C_\bullet(\mathcal U,L\otimes_{A^{op}}Z)$ to $C_\bullet(\mathcal U,L'\otimes_{A^{op}}Z')$ in the sense 
of Definition \ref{D7.3}. The result is now clear by applying Proposition \ref{P7.5}.
\end{proof}

\small

\begin{bibdiv}
	\begin{biblist}

	\bib{AJ}{article}{
   author={Anel, M.},
   author={Joyal, A.},
   title={Sweedler theory for (co)algebras and the bar-cobar constructions},
   journal={arXiv 1309.6952},
   date={2013},
}

	\bib{BanK}{article}{
   author={Banerjee, A.},
   author={Kour, S.},
   title={On measurings of algebras over operads and homology theories},
   journal={Algebr. Geom. Topol.},
   volume={22},
   date={2022},
   number={3},
   pages={1113--1158},
}

\bib{Bat0}{article}{
   author={Batchelor, M.},
   title={Difference operators, measuring coalgebras, and quantum group-like
   objects},
   journal={Adv. Math.},
   volume={105},
   date={1994},
   number={2},
   pages={190--218},
}

\bib{Bat}{article}{
   author={Batchelor, M.},
   title={Measuring comodules---their applications},
   journal={J. Geom. Phys.},
   volume={36},
   date={2000},
   number={3-4},
   pages={251--269},
}

\bib{BoK}{article}{
   author={B\"{o}hm, G.},
   author={Szlach\'{a}nyi, K.},
   title={Hopf algebroids with bijective antipodes: axioms, integrals, and
   duals},
   journal={J. Algebra},
   volume={274},
   date={2004},
   number={2},
   pages={708--750},
}

\bib{B1}{article}{
   author={B\"{o}hm, G.},
   title={Galois theory for Hopf algebroids},
   journal={Ann. Univ. Ferrara Sez. VII (N.S.)},
   volume={51},
   date={2005},
   pages={233--262},
}

\bib{Boh}{article}{
   author={B\"{o}hm, G.},
   title={Hopf algebroids},
   conference={
      title={Handbook of algebra. Vol. 6},
   },
   book={
      series={Handb. Algebr.},
      volume={6},
      publisher={Elsevier/North-Holland, Amsterdam},
   },
   date={2009},
   pages={173--235},
}

\bib{BrJA}{article}{
   author={Brzezi\'{n}ski, T.},
   title={On modules associated to coalgebra Galois extensions},
   journal={J. Algebra},
   volume={215},
   date={1999},
   number={1},
   pages={290--317},
}

\bib{BBW}{article}{
   author={B\"{o}hm, G.},
   author={Brzezi\'{n}ski, T.},
   author={Wisbauer, R.},
   title={Monads and comonads on module categories},
   journal={J. Algebra},
   volume={322},
   date={2009},
   number={5},
   pages={1719--1747},
}

\bib{BrzMi}{article}{
   author={Brzezi\'{n}ski, T.},
   author={Militaru, G.},
   title={Bialgebroids, $\times_A$-bialgebras and duality},
   journal={J. Algebra},
   volume={251},
   date={2002},
   number={1},
   pages={279--294},
}

\bib{BrzWi}{book}{
   author={Brzezi\'{n}ski, T.},
   author={Wisbauer, R.},
   title={Corings and comodules},
   series={London Mathematical Society Lecture Note Series},
   volume={309},
   publisher={Cambridge University Press, Cambridge},
   date={2003},
   pages={xii+476},
}

\bib{CM1}{article}{
   author={Connes, A.},
   author={Moscovici, H.},
   title={Hopf algebras, cyclic cohomology and the transverse index theorem},
   journal={Comm. Math. Phys.},
   volume={198},
   date={1998},
   number={1},
   pages={199--246},
}

\bib{CM2}{article}{
   author={Connes, A.},
   author={Moscovici, H.},
   title={Cyclic cohomology and Hopf algebras},
   note={Mosh\'{e} Flato (1937--1998)},
   journal={Lett. Math. Phys.},
   volume={48},
   date={1999},
   number={1},
   pages={97--108},
}

\bib{CM3}{article}{
   author={Connes, A.},
   author={Moscovici, H.},
   title={Modular Hecke algebras and their Hopf symmetry},
   journal={Mosc. Math. J.},
   volume={4},
   date={2004},
   number={1},
   pages={67--109, 310},
}

\bib{Fres}{book}{
   author={Fresse, B.},
   title={Modules over operads and functors},
   series={Lecture Notes in Mathematics},
   volume={1967},
   publisher={Springer-Verlag, Berlin},
   date={2009},
   pages={x+308},
}

\bib{GerS}{article}{
   author={Gerstenhaber, M.},
   author={Schack, S.~D.},
   title={Algebras, bialgebras, quantum groups, and algebraic deformations},
   conference={
      title={Deformation theory and quantum groups with applications to
      mathematical physics},
      address={Amherst, MA},
      date={1990},
   },
   book={
      series={Contemp. Math.},
      volume={134},
      publisher={Amer. Math. Soc., Providence, RI},
   },
   date={1992},
   pages={51--92},
}

\bib{GM1}{article}{
   author={Grunenfelder, L.},
   author={Mastnak, M.},
   title={On bimeasurings},
   journal={J. Pure Appl. Algebra},
   volume={204},
   date={2006},
   number={2},
   pages={258--269},
}

\bib{GM2}{article}{
   author={Grunenfelder, L.},
   author={Mastnak, M.},
   title={On bimeasurings. II},
   journal={J. Pure Appl. Algebra},
   volume={209},
   date={2007},
   number={3},
   pages={823--832},
}

\bib{H1}{article}{
   author={Huebschmann, J.},
   title={Poisson cohomology and quantization},
   journal={J. Reine Angew. Math.},
   volume={408},
   date={1990},
   pages={57--113},
}

\bib{H2}{article}{
   author={Huebschmann, J.},
   title={Lie-Rinehart algebras, Gerstenhaber algebras and
   Batalin-Vilkovisky algebras},
   journal={Ann. Inst. Fourier (Grenoble)},
   volume={48},
   date={1998},
   number={2},
   pages={425--440},
}

\bib{V2}{article}{
   author={Hyland, M.},
   author={L\'{o}pez Franco, I.},
   author={Vasilakopoulou, C.},
   title={Hopf measuring comonoids and enrichment},
   journal={Proc. Lond. Math. Soc. (3)},
   volume={115},
   date={2017},
   number={5},
   pages={1118--1148},
}

\bib{V1}{article}{
   author={Hyland, M.},
   author={L\'{o}pez Franco, I.},
   author={Vasilakopoulou, C.},
   title={Measuring comodules and enrichment},
   journal={arXiv 1703.10137 (2017)},
}

\bib{KoKr2}{article}{
   author={Kowalzig, N.},
   author={Kr\"{a}hmer, U.},
   title={Duality and products in algebraic (co)homology theories},
   journal={J. Algebra},
   volume={323},
   date={2010},
   number={7},
   pages={2063--2081},
}

\bib{KoP}{article}{
   author={Kowalzig, N.},
   author={Posthuma, H.},
   title={The cyclic theory of Hopf algebroids},
   journal={J. Noncommut. Geom.},
   volume={5},
   date={2011},
   number={3},
   pages={423--476},
}

\bib{KoKr}{article}{
   author={Kowalzig, N.},
   author={Kr\"{a}hmer, U.},
   title={Cyclic structures in algebraic (co)homology theories},
   journal={Homology Homotopy Appl.},
   volume={13},
   date={2011},
   number={1},
   pages={297--318},
}

\bib{Ko3}{article}{
   author={Kowalzig, N.},
   title={Batalin-Vilkovisky algebra structures on (Co)Tor and Poisson
   bialgebroids},
   journal={J. Pure Appl. Algebra},
   volume={219},
   date={2015},
   number={9},
   pages={3781--3822},
}

\bib{Ko4}{article}{
   author={Kowalzig, N.},
   title={Gerstenhaber and Batalin-Vilkovisky structures on modules over
   operads},
   journal={Int. Math. Res. Not. IMRN},
   date={2015},
   number={22},
   pages={11694--11744},
}

\bib{Loday}{book}{
   author={Loday, J.-L},
   title={Cyclic homology},
   series={Grundlehren der mathematischen Wissenschaften},
   volume={301},
   edition={2},
   note={Appendix E by M. O. Ronco;
   Chapter 13 by the author in collaboration with T. Pirashvili},
   publisher={Springer-Verlag, Berlin},
   date={1998},
}

\bib{Lodv}{book}{
   author={Loday, J.-L},
   author={Vallette, B.},
   title={Algebraic operads},
   series={Grundlehren der mathematischen Wissenschaften [Fundamental
   Principles of Mathematical Sciences]},
   volume={346},
   publisher={Springer, Heidelberg},
   date={2012},
   pages={xxiv+634},
}

\bib{Men}{article}{
   author={Menichi, L.},
   title={Connes-Moscovici characteristic map is a Lie algebra morphism},
   journal={J. Algebra},
   volume={331},
   date={2011},
   pages={311--337},
}

\bib{MM}{article}{
   author={Moerdijk, I.},
   author={Mr\v{c}un, J.},
   title={On the universal enveloping algebra of a Lie algebroid},
   journal={Proc. Amer. Math. Soc.},
   volume={138},
   date={2010},
   number={9},
   pages={3135--3145},
}

\bib{Pors}{article}{
   author={Porst, H.-E.},
   author={Street, R.},
   title={Generalizations of the Sweedler dual},
   journal={Appl. Categ. Structures},
   volume={24},
   date={2016},
   number={5},
   pages={619--647},
}

\bib{Ri}{article}{
   author={Rinehart, G. S.},
   title={Differential forms on general commutative algebras},
   journal={Trans. Amer. Math. Soc.},
   volume={108},
   date={1963},
   pages={195--222},
}

\bib{Sch1}{article}{
   author={Schauenburg, P.},
   title={Bialgebras over noncommutative rings and a structure theorem for
   Hopf bimodules},
   journal={Appl. Categ. Structures},
   volume={6},
   date={1998},
   number={2},
   pages={193--222},
}

\bib{Sch2}{article}{
   author={Schauenburg, P.},
   title={Duals and doubles of quantum groupoids ($\times_R$-Hopf algebras)},
   conference={
      title={New trends in Hopf algebra theory},
      address={La Falda},
      date={1999},
   },
   book={
      series={Contemp. Math.},
      volume={267},
      publisher={Amer. Math. Soc., Providence, RI},
   },
   date={2000},
   pages={273--299},
}

\bib{Sweed}{book}{
   author={Sweedler, M. E.},
   title={Hopf algebras},
   series={Mathematics Lecture Note Series},
   publisher={W. A. Benjamin, Inc., New York},
   date={1969},
}
	
	\bib{Tak}{article}{
   author={Takeuchi, M.},
   title={Groups of algebras over $A\otimes \overline A$},
   journal={J. Math. Soc. Japan},
   volume={29},
   date={1977},
   number={3},
   pages={459--492},
}

	\bib{Vs0}{article}{
   author={Vasilakopoulou, C.},
   title={Enrichment of categories of algebras and modules},
   journal={arXiv 1205.6450},
   date={2012},
}

	\bib{Vs1}{article}{
   author={Vasilakopoulou, C.},
   title={On enriched fibrations},
   journal={Cah. Topol. G\'{e}om. Diff\'{e}r. Cat\'{e}g.},
   volume={59},
   date={2018},
   number={4},
   pages={354--387},
}

\bib{Vs2}{article}{
   author={Vasilakopoulou, C.},
   title={Enriched duality in double categories: $\mathcal{V}$-categories and
   $\mathcal{V}$-cocategories},
   journal={J. Pure Appl. Algebra},
   volume={223},
   date={2019},
   number={7},
   pages={2889--2947},
}

\bib{Xu0}{article}{
   author={Xu, P.},
   title={Gerstenhaber algebras and BV-algebras in Poisson geometry},
   journal={Comm. Math. Phys.},
   volume={200},
   date={1999},
   number={3},
   pages={545--560},
}

\bib{Xu}{article}{
   author={Xu, P.},
   title={Quantum groupoids},
   journal={Comm. Math. Phys.},
   volume={216},
   date={2001},
   number={3},
   pages={539--581},
}
	\end{biblist}
	
	\end{bibdiv}

\end{document}